\DeclareMathOperator{\quot}{\mathsf{Quot}}
\renewcommand{\tilde}[1]{\widetilde{#1}}
\renewcommand{\bar}[1]{\overline{#1}}
\newtheorem{theorem}{Theorem}
\newtheorem{lemma}{Lemma}
\newtheorem{corollary}{Corollary}
\newtheorem{proposition}{Proposition}
\newtheorem{definition}{Definition}
\theoremstyle{definition}
\newtheorem{remark}{Remark}
\newtheoremstyle{TheoremNum}
        {7pt}{7pt}              
        {\itshape}                      
        {}                              
        {\bfseries}                     
        {.}                             
        { }                             
        {\thmname{#1}\thmnote{ \bfseries #3}}
    \theoremstyle{TheoremNum}
\newcommand{\BA}{\mathbb{A}}
\newcommand{\BC}{\mathbb{C}}
\newcommand{\BN}{\mathbb{N}}
\newcommand{\BP}{\mathbb{P}}
\newcommand{\BQ}{\mathbb{Q}}
\newcommand{\BZ}{\mathbb{Z}}
\newcommand{\CE}{\mathcal{E}}
\newcommand{\CF}{\mathcal{F}}
\newcommand{\CG}{\mathcal{G}}
\newcommand{\CK}{\mathcal{K}}
\newcommand{\CL}{\mathcal{L}}
\newcommand{\CO}{\mathcal{O}}
\newcommand{\CQ}{\mathcal{Q}}
\newcommand{\CY}{\mathcal{Y}}
\newcommand{\DD}{\mathsf{D}}
\newcommand{\KK}{\mathsf{K}}
\newcommand{\tE}{\widetilde{E}}
\newcommand{\tCL}{\widetilde{\CL}}
\newcommand{\fY}{\mathsf{Y}}
\newcommand{\fZ}{\mathsf{Z}}
\newcommand{\fsl}{\mathfrak{sl}}
\newcommand{\be}{\widetilde{e}}
\newcommand{\red}{\text{ red}}
\newcommand{\ered}{\emph{ red}}
\newcommand{\te}{{e}}
\newcommand{\tf}{{f}}
\newcommand{\tm}{{m}}
\newcommand{\ke}{\bar{e}}
\newcommand{\kf}{\bar{f}}
\newcommand{\km}{\bar{m}}
\newcommand{\qe}{\mathsf{e}}
\newcommand{\qf}{\mathsf{f}}
\newcommand{\qm}{\mathsf{m}}
\newcommand{\qh}{\mathsf{h}}
\newcommand{\UU}{U_q^r(L\fsl_2)}
\newcommand{\bd}{\underline{d}}
\newcommand{\fp}{\mathfrak{p}}
\newcommand{\fg}{\mathfrak{g}}
\newcommand{\tfg}{\widetilde{\fg}}
\newcommand{\bull}{\bullet}
\newcommand{\sign}{\int}
\begin{document}

\parindent=30pt

\baselineskip=17pt
\title [Derived categories of Quot schemes and tautological bundles]{Derived categories of Quot schemes on smooth curves and tautological bundles}

\author{Alina Marian and Andrei Negu\cb{t}}

\begin{abstract}

We define a categorical action of the shifted quantum loop group of $\fsl_2$ on the derived categories of Quot schemes of finite length quotient sheaves on a smooth projective curve. As an application, we obtain a semi-orthogonal decomposition of the derived categories of Quot schemes, of representation-theoretic origin. 
We use this decomposition to calculate the cohomology of interesting tautological vector bundles over the Quot scheme. 

\end{abstract}

\address{The Abdus Salam International Centre for Theoretical Physics, Strada Costiera 11, 34151 Trieste, Italy \newline \text{ } \qquad Department of Mathematics, Northeastern University, 360 Huntington Avenue, Boston, MA 02115}
\email{amarian@ictp.it}

\vskip.2in

\address{École Polytechnique Fédérale de Lausanne (EPFL), Lausanne, Switzerland \newline \text{ } \qquad Simion Stoilow Institute of Mathematics (IMAR), Bucharest, Romania}

\email{andrei.negut@gmail.com}

\date{}

\vskip.2in

\maketitle

\setcounter{tocdepth}{1}
\tableofcontents

\vskip.2in

\section{Introduction}

\vskip.2in

\subsection{Quot schemes over curves}
\label{sub:intro quot}

In algebraic geometry, few moduli spaces of sheaves have been shown to admit semiorthogonal decompositions of their derived category which are geometrically explicit with simple components, in the spirit of the classic case of homogeneous varieties \cite{beilinson, 
 kapranov1, kapranov2}.
The present paper uses representation-theoretic techniques to obtain a geometrically transparent semiorthogonal decomposition of the derived category of the Grothendieck Quot scheme parameterizing quotient sheaves of finite length on a smooth projective curve.
Our description gives strong control over the derived category. We are able to compute the cohomology groups of tautological vector bundles over the Quot scheme, proving conjectures of \cite{krug1, os}. We now detail the discussion.  

\medskip

Fix $V$ a locally free sheaf of rank $r \geq 1$ on a smooth complex projective curve $C$. We consider Grothendieck's Quot scheme $\quot_d = \quot_{d}(V)$ parametrizing rank $0$ degree $d$ quotients of $V$: 
\begin{equation}
\label{eqn:quot very intro}
V \twoheadrightarrow F,\quad \text{where rank }F=0, \ \text{deg }F=d.
\end{equation}
Alternatively, we may describe $\quot_d$ as $\{(E \subset V)\}$, where $E = \text{Ker }(V \twoheadrightarrow F)$. We will denote by $\pi$ and $\rho$ the projections from the product $\quot_d \,\times \, C$ to the two factors. On general grounds, the Quot scheme carries a universal short exact sequence
\begin{equation}
\label{eqn:ses intro}
0 \to \CE \to \rho^* (V) \to \CF \to 0 \qquad \text{on}  \qquad \quot_d \,  \times \, C,
\end{equation}
and its deformation-obstruction complex is $\text{Ext}^\bullet_\pi (\mathcal E, \, \mathcal F)$. Since all the quotients $F$ are supported at finitely many points of $C$, this complex is supported in homological degree $\bullet = 0$, hence $\quot_d$ is a smooth projective variety of dimension $rd$. 

\medskip

In \cite{mn}, we defined natural operators on the cohomology (with $\mathbb{Q}$ coefficients) of the Quot scheme $\quot_d$, and identified the algebra they generate with the shifted Yangian of $\mathfrak{sl}_2$. Furthermore, we showed that a subalgebra of commuting operators, admitting a simple geometric interpretation, gives rise to a canonical basis of the cohomology of $\quot_d$. We thereby gave a representation theoretic interpretation of the well-known formulas for the Betti numbers of the Quot scheme (\cite{stromme, bgl, chen, bfp, ricolfi}). In the present paper, we lift the cohomological operators to the derived category of Quot schemes, and study the rich geometry of the resulting functors. We obtain a semi-orthogonal decomposition of the derived category of $\quot_d$, with blocks indexed by the same data as the canonical basis of the cohomology. When $C = \mathbb P^1$, the decomposition yields a full exceptional collection in the derived category. We finally use the semiorthogonal decomposition to calculate the cohomology of natural tautological vector bundles over the Quot scheme. Our main results are Theorems \ref{thm:intro main}, \ref{thm:intro semi}, and \ref{thm:tautological intro}, describing the categorical relations between our functors, the semi-orthogonal decomposition, and the cohomology of tautological bundles. We now explain these results. 

\subsection{The functors $\te_i,\tf_i$ and $\tm_i$}
\label{sub:intro 1}

Throughout the present paper, we write
\begin{equation}
\label{eqn:dd}
\DD_X = D^b(\text{Coh}(X))
\end{equation}
for any smooth algebraic variety $X$. For any $d \geq 0$, consider the nested Quot scheme
\begin{equation}
\label{eqn:intro nested}
\quot_{d,d+1} = \Big\{ E' \subset E \subset V \Big\}
\end{equation}
with the injections above having colengths $1$ and $d$, respectively. As we will recall in Subsection \ref{sub:basic nested}, $\quot_{d,d+1}$ is a smooth projective variety of dimension $r(d+1)$. For any point of the moduli space \eqref{eqn:intro nested}, we have $E/E' \cong \BC_x$ for some closed point $x \in C$. Therefore, there exist maps 
 \begin{equation}
\label{eqn:diagram intro}
\xymatrix{& \quot_{d,d+1} \ar[ld]_{p_-} \ar[d]^{p_C} \ar[rd]^{p_+} & \\ \quot_{d} & C & \quot_{d+1}}
\end{equation}
which remember $E \subset V,\, x, \,E' \subset V$, respectively. Moreover, there is a tautological line bundle
 \begin{equation}
\label{eqn:tautological intro}
\xymatrix{\CL \ar@{.>}[d] \\ \quot_{d,d+1}}
\end{equation}
which encodes $\Gamma(C,E/E')$. As reviewed in Subsection \ref{sub:basic nested}, the maps $p_\pm$ are local complete intersection morphisms, and $p_\pm \times p_C$ are proper (these properties also hold if the curve $C$ is not projective, although we will not deal with this case in the present paper). This allows us to define the functors 
\begin{align}
&\DD_{\quot_d} \xrightarrow{\te_i} \DD_{\quot_{d+1} \times C}, \quad \te_i(\gamma) = R(p_+ \times p_C)_* \Big(\CL^{i} \otimes Lp_-^* (\gamma) \Big) \label{eqn:e intro} \\
&\DD_{\quot_{d+1}} \xrightarrow{\tf_i} \DD_{\quot_{d} \times C}, \quad \tf_i(\gamma) = R(p_- \times p_C)_* \Big(\CL^{i} \otimes Lp_+^* (\gamma) \Big) \label{eqn:f intro}
\end{align}
for all $i \in \BZ$ and $d \geq 0$. We will also encounter the functors of tensor product with the exterior powers of the rank $r$ universal vector bundle from \eqref{eqn:ses intro}, for all $i \in \{0,\dots,r\}$
\begin{equation}
\label{eqn:m intro}
\DD_{\quot_d} \xrightarrow{\tm_i} \DD_{\quot_d \times C}, \quad \tm_i(\gamma) = \wedge^i\CE \otimes \pi^*(\gamma)
\end{equation}

\subsection{The categorical relations}

Our first main result is the identification of the commutation relations satisfied by the functors above. In what follows, we will consider the product $\textcolor{red}{C} \times \textcolor{blue}{C}$ of two copies of our curve $C$, and the colors of our functors will always match the color of the copy of $C$ in which they act. For instance, $\textcolor{red}{\te_i} \circ \textcolor{blue}{\tf_j}$ will refer to the composition
$$
\DD_{\quot_d} \xrightarrow{\textcolor{blue}{\tf_j}} \DD_{\quot_{d-1} \times \textcolor{blue}{C}} \xrightarrow{\textcolor{red}{\te_i} \boxtimes \text{Id}_{\textcolor{blue}{C}}} \DD_{\quot_{d} \times \textcolor{red}{C} \times \textcolor{blue}{C}}
$$
Let $\DD_{\quot} = \bigoplus_{d=0}^{\infty} \DD_{\quot_d}$. If $\Delta : C \hookrightarrow \textcolor{red}{C} \times \textcolor{blue}{C}$ denotes the diagonal, then
$$
\forall \phi : \DD_{\quot} \rightarrow \DD_{\quot \times \textcolor{red}{C} \times \textcolor{blue}{C}}, \quad \ \text{ we write } \ \quad \phi|_\Delta : \DD_{\quot} \rightarrow \DD_{\quot \times C}
$$
$$
\forall \psi : \DD_{\quot} \rightarrow \DD_{\quot \times C}, \quad \text{we write} \quad \Delta_*(\psi) : \DD_{\quot} \rightarrow  \DD_{\quot \times \textcolor{red}{C} \times \textcolor{blue}{C}} 
$$
for the appropriate compositions of $\phi,\psi$ with pull-back and push-forward under $\Delta$.

\begin{theorem}
\label{thm:intro main}

(a) (Proposition \ref{prop:quadratic relation e}) For any $i \leq j \in \BZ$, there is a natural transformation
\begin{equation}
\label{eqn:quadratic relation e intro}
\textcolor{blue}{\te_j} \circ \textcolor{red}{\te_i}  \longrightarrow \textcolor{red}{\te_i} \circ \textcolor{blue}{\te_j}
\end{equation}
of functors $\DD_{\quot} \rightarrow \DD_{\quot \times \textcolor{red}{C} \times \textcolor{blue}{C}}$, whose cone has a filtration with associated graded 
\begin{equation}
\label{eqn:cone ee intro}
\bigoplus_{k=i}^{j-1} \Delta_*\left(\te_k \circ \te_{i+j-k} \Big|_\Delta\right)
\end{equation}

\medskip

\noindent (b) (Proposition \ref{prop:e and f}) For all $i,j \in \BZ$, there is a natural transformation of functors 
\begin{equation}
\label{eqn:e and f intro}
\textcolor{blue}{\tf_j} \circ \textcolor{red}{\te_{i}} \xleftrightharpoons[\text{if }i+j \geq 0]{\text{if }i+j < 0}  \textcolor{red}{\te_{i}} \circ \textcolor{blue}{\tf_j}
\end{equation}
whose cone is $\Delta_*(\alpha_{i+j} \otimes \pi^*(-))$, where the objects $\{\alpha_{\ell} \in \DD_{\quot \times C} \}_{\ell \in \BZ}$ are precisely defined in Proposition \ref{prop:e and f}.

\medskip

\noindent (c) (Proposition \ref{prop:comm e and m}) For all $i\in \BZ$ and $j \in \{0,\dots,r\}$, there is a natural transformation
\begin{equation}
\label{eqn:e and m natural transformation intro}
\textcolor{blue}{\tm_j} \circ \textcolor{red}{\te_i} \longrightarrow \textcolor{red}{\te_i} \circ \textcolor{blue}{\tm_j}
\end{equation}
of functors $\DD_{\quot} \rightarrow \DD_{\quot \times \textcolor{red}{C} \times \textcolor{blue}{C}}$, which is an isomorphism for $j=0$ and has cone
\begin{equation}
\label{eqn:e and m cone intro}
\Delta_* \left( \te_{i+1} \circ \tm_{j-1} \Big|_\Delta \rightarrow \dots \rightarrow \te_{i+j} \circ \tm_{0} \Big|_\Delta \right) 
\end{equation}
for all $j \geq 1$. Moreover, we have an isomorphism of functors $\DD_{\quot} \rightarrow \DD_{\quot \times \textcolor{red}{C} \times \textcolor{blue}{C}}$
\begin{equation}
\label{eqn:iso intro}
\textcolor{blue}{\tm_r} \circ \textcolor{red}{\te_i} \cong  \textcolor{red}{\te_i} \circ \textcolor{blue}{\tm_r} (-\Delta)
\end{equation}

\end{theorem}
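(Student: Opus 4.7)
The plan is to realize every composition appearing in the theorem as an integral transform with an explicit kernel on an iterated nested Quot scheme, construct each natural transformation as a map of kernels induced by a universal short exact sequence or a determinantal identity, and compute each cone by identifying its support and using an adic or Koszul-type filtration.

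For part (a), both compositions are pushforwards from the triply nested Quot scheme $\quot_{d,d+1,d+2} = \{E'' \subset E' \subset E \subset V\}$ (successive colengths $1,1,d$), which carries two tautological line bundles $\CL_{\text{top}}, \CL_{\text{bot}}$ (for the quotients $E/E'$ and $E'/E''$) and two evaluation maps to $C$ (for their supports $x_{\text{top}}, x_{\text{bot}}$). Using Tor-independent base change along the l.c.i.\ morphisms $p_\pm$ from Subsection \ref{sub:basic nested}, one identifies the kernel of $\textcolor{red}{\te_i} \circ \textcolor{blue}{\te_j}$ as $\CL_{\text{top}}^j \otimes \CL_{\text{bot}}^i$ and that of $\textcolor{blue}{\te_j} \circ \textcolor{red}{\te_i}$ as $\CL_{\text{top}}^i \otimes \CL_{\text{bot}}^j$. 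A distinguished section of $(\CL_{\text{top}}/\CL_{\text{bot}})^{j-i}$ vanishing to first order on the coincidence divisor $Z = \{x_{\text{top}} = x_{\text{bot}}\}$ then yields \eqref{eqn:quadratic relation e intro}; its cone is scheme-theoretically supported on $Z$, and the natural $\CO(-Z)$-adic filtration of the restriction pushes forward to the associated graded \eqref{eqn:cone ee intro}.

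For part (b), an analogous analysis uses the fiber product of the two Hecke correspondences $\quot_{d-1,d}$ and $\quot_{d,d+1}$ over $\quot_d$, which decomposes set-theoretically into a generic component (where the two modification points are distinct) and an excess divisorial component over the diagonal $\Delta$. On the generic component both $\tf_j \te_i$ and $\te_i \tf_j$ become pushforwards of the same object twisted by a common line bundle, and the natural transformation \eqref{eqn:e and f intro} is induced by a canonical section of a certain ``top over bot'' line bundle, which exists in one direction or the other according to the sign of $i+j$. The object $\alpha_{i+j}$ is extracted by pushing forward the restriction of the cone to the excess component. This is the main technical obstacle of the proof: carrying out the excess intersection analysis and pinpointing $\alpha_{i+j}$ is substantially more delicate than in (a) or (c), where the map is built directly from a universal SES.

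For part (c), the key input is the universal short exact sequence
$$
0 \longrightarrow \CE_{d+1} \longrightarrow \CE_d \longrightarrow \iota_* \CL \longrightarrow 0
$$
on $\quot_{d,d+1} \times C$, where $\iota$ is the inclusion of the graph of $p_C$ and $\CE_\bullet$ denote the universal subsheaves pulled back from $\quot_\bullet \times C$. Taking $\wedge^j$ via the standard Koszul filtration yields a canonical map $\wedge^j \CE_{d+1} \to \wedge^j \CE_d$ whose cone is supported on the graph $\Gamma$ and admits a length-$j$ filtration with graded pieces $\iota_*(\wedge^{j-k}\CE_{d+1}|_\Gamma \otimes \CL^k)$ for $k = 1, \dots, j$. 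Translating to the functor level via the definition of $\te_i$ gives the natural transformation \eqref{eqn:e and m natural transformation intro} with cone \eqref{eqn:e and m cone intro}. For $j = r$, the sheaves $\wedge^r \CE_d, \wedge^r \CE_{d+1}$ are invertible, so the SES reduces to the determinantal identity $\det \CE_d \cong \det \CE_{d+1}(\Gamma)$; the resulting $\CO(\Gamma)$ twist on $\quot_{d,d+1}$, once pushed forward, becomes the $\CO(-\Delta)$ twist on $\textcolor{red}{C} \times \textcolor{blue}{C}$ that appears in \eqref{eqn:iso intro}.
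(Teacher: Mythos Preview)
For part (c), your approach is essentially the paper's: both proceed by taking exterior powers of the universal short exact sequence $0 \to \CE_{d+1} \to \CE_d \to \iota_*\CL \to 0$ on $\quot_{d,d+1} \times C$. One correction: the graded pieces of the resulting long exact sequence involve $\wedge^{j-k}\CE_d$ (pulled back from the source $\quot_d$), not $\wedge^{j-k}\CE_{d+1}$; this is what makes them match the functors $\te_{i+k} \circ \tm_{j-k}|_\Delta$ appearing in the stated cone. The determinant argument for \eqref{eqn:iso intro} is exactly right.

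For part (a), however, there is a genuine gap. The two compositions $\textcolor{red}{\te_i} \circ \textcolor{blue}{\te_j}$ and $\textcolor{blue}{\te_j} \circ \textcolor{red}{\te_i}$ are indeed both pushforwards from the triple scheme $\quot_{d-1,d,d+1}$, but along \emph{different} maps to $\textcolor{red}{C} \times \textcolor{blue}{C}$: one sends the support pair $(x_{\text{bot}}, x_{\text{top}})$ to $(\textcolor{red}{C},\textcolor{blue}{C})$, the other sends $(x_{\text{top}}, x_{\text{bot}})$. So even if a section of $(\CL_{\text{top}}/\CL_{\text{bot}})^{j-i}$ existed on the triple scheme, it would not produce a morphism between the two kernels on $\quot_{d+1} \times \textcolor{red}{C} \times \textcolor{blue}{C}$. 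In fact no such section exists: there is no natural map $E'/E'' \to E/E'$ of one-dimensional quotients. The paper resolves this by introducing a \emph{quadruple} moduli space $\fY$ parameterizing squares $E'' \subset \{E', \tE'\} \subset E \subset V$; this space dominates both orderings of the triple scheme via blow-down maps $\pi^\uparrow, \pi^\downarrow$, carries a single map to $\textcolor{red}{C} \times \textcolor{blue}{C}$, and supports genuine line-bundle maps $\tCL_1 \to \CL_2$ vanishing along the divisor $\{E' = \tE'\}$. These maps, not a section on the triple scheme, are what produce the natural transformation and its filtered cone.

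Part (b) has the same structural issue, and the actual argument is considerably more intricate than your sketch. The compositions $\te_i \tf_j$ and $\tf_j \te_i$ are realized on two \emph{different} fiber products $\fZ_- = \{E' \subset E \supset \tE'\}$ and $\fZ_+ = \{E' \supset E'' \subset \tE'\}$, linked only through the same quadruple space $\fY$. The space $\fZ_-$ is smooth (a fiber product of projective bundles), but $\fZ_+$ has two equidimensional irreducible components --- not a generic piece plus an excess divisor. One component is the image of $\fY$ and has rational singularities (requiring a separate local argument); the other is a copy of $\quot_{d,d+1}$ which contributes directly to $\alpha_{i+j}$. The natural transformation is assembled by chaining maps across $\fZ_+ \to \fZ_+' \leftarrow \fY \to \fZ_-$, and the objects $\alpha_\ell$ arise from explicit projective-bundle pushforward computations on the non-isomorphism steps.
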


In $(a)$-$(c)$ above, the transposed results hold with $e$'s replaced by $f$'s, i.e. replacing all $\te_a \circ \te_b$ by $\tf_b \circ \tf_a$, all $\te_a \circ \tm_b$ by $\tm_b \circ \tf_a$ etc. 

\medskip

Motivated by the $K$-theoretic discussion to follow, we interpret Theorem \ref{thm:intro main} as a categorical action of shifted quantum $L\fsl_2$ on $\DD_{\quot}$, see \eqref{eqn:action intro}. This can be compared with the categorical action of quantum $\fsl_2$ on the equivariant derived categories of cotangent bundles to Grassmannians from \cite{ckl} (as well as the affine version of \cite[Section 4.3]{cl}, although the latter is quite a different presentation from ours). 

\subsection{The operators $e_i,f_i,m_i$ and shifted quantum $L\fsl_2$}
\label{sub:intro 1 bis}

Let us now consider the $K$-theoretic shadows of the functors defined in the preceding Subsection, namely 
\begin{align*}
&\KK_{\quot_d} \xrightarrow{\ke_i} \KK_{\quot_{d+1} \times C}, \quad \ke_i(\gamma) = (p_+ \times p_C)_* \Big([\CL^{i}] \otimes p_-^* (\gamma) \Big) \\
&\KK_{\quot_{d+1}} \xrightarrow{\kf_i} \KK_{\quot_{d} \times C}, \quad \kf_i(\gamma) = (p_- \times p_C)_* \Big([\CL^{i}] \otimes p_+^* (\gamma) \Big) \\
&\KK_{\quot_d} \xrightarrow{\km_i} \KK_{\quot_d \times C}, \quad \ \ \km_i(\gamma) = [\wedge^i\CE] \otimes \pi^*(\gamma)
\end{align*}
where $\KK_X$ denotes the algebraic $K$-theory ring of a smooth algebraic variety $X$. As a consequence of Theorem \ref{thm:intro main}, we obtain the following formulas.

\begin{corollary}
\label{cor:k-th intro}
 
We have the commutation relations 
$$
[\textcolor{red}{\ke_i}, \textcolor{blue}{\ke_j}] = \Delta_* \left( \sum_{k=i}^{j-1} \ke_k \circ \ke_{i+j-k} \Big|_\Delta \right)
$$
for all integers $i \leq j$,
$$
[\textcolor{red}{\ke_i}, \textcolor{blue}{\km_j}] = \Delta_* \left( \sum_{k=1}^{j} (-1)^{k+1} \ke_{i+k} \circ \km_{j-k} \Big|_\Delta \right)
$$
for all $i \in \BZ$ and $j \in \{0,\dots,r\}$, 
$$
\textcolor{blue}{\km_r} \circ \textcolor{red}{\ke_i} = \textcolor{red}{\ke_i} \circ \textcolor{blue}{\km_r} (-\Delta)
$$
for all $i \in \BZ$, and (see \eqref{eqn:wedge} and \eqref{eqn:wedge virtual} for the $\wedge^\bullet$ notation)
$$
[\textcolor{red}{\ke_i}, \textcolor{blue}{\kf_j}] = \Delta_* \left(\text{multiplication by } \int_{\infty - 0} z^{i+j} \cdot \frac {\wedge^\bullet\left(\frac {zq}{V}\right)}{\wedge^\bullet\left(\frac {\CE}z\right)\wedge^\bullet\left(\frac {zq}{\CE}\right)} \right)
$$
for all $i,j \in \BZ$, where $\int_{\infty-0} R(z)$ denotes the constant term in the expansion of a rational function $R(z)$ near $z=\infty$ minus the constant term in the expansion of $R(z)$ near $z=0$. The transposes of the above formulas with $e$'s replaced by $f$'s also hold. 

\end{corollary}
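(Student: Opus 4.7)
The plan is to pass from each distinguished triangle or natural transformation in Theorem \ref{thm:intro main} to an identity in the Grothendieck group of coherent sheaves. Since the functors $\te_i, \tf_i, \tm_i$ are built from derived pushforward, derived pullback, and derived tensor product with explicit complexes, they descend under $K_0$ to the $K$-theoretic operators $\ke_i, \kf_i, \km_i$ defined in the statement; every natural transformation between them therefore yields a relation between $K$-theory classes, and every cone becomes an honest alternating sum.

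For the $\textbf{ee}$-commutator, part (a) of Theorem \ref{thm:intro main} provides a morphism $\te_j \circ \te_i \to \te_i \circ \te_j$ whose cone is filtered with associated graded $\bigoplus_{k=i}^{j-1}\Delta_*(\te_k \circ \te_{i+j-k}|_\Delta)$. In $K$-theory the class of a filtered object is the sum of its associated graded classes, and this is exactly the formula for $[\ke_i, \ke_j]$. For the $\textbf{em}$-commutator, part (c) writes the cone of $\tm_j \circ \te_i \to \te_i \circ \tm_j$ as an iterated cone of the complex with terms $\Delta_*(\te_{i+k} \circ \tm_{j-k}|_\Delta)$ for $k = 1,\ldots,j$. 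The class of such a complex is the alternating sum of its term classes, reproducing the sign pattern $(-1)^{k+1}$ in the claimed formula. The identity $\km_r \circ \ke_i = \ke_i \circ \km_r(-\Delta)$ descends directly from the isomorphism \eqref{eqn:iso intro}. The corresponding statements with $e$'s replaced by $f$'s follow from the transposed versions of (a)--(c) noted after Theorem \ref{thm:intro main}.

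The remaining relation $[\ke_i, \kf_j]$ is where the real work lies. Part (b) provides a morphism between $\te_i \circ \tf_j$ and $\tf_j \circ \te_i$ (pointing in one direction if $i+j < 0$ and in the other if $i+j \geq 0$) whose cone is $\Delta_*(\alpha_{i+j} \otimes \pi^*(-))$, so decategorification gives $[\ke_i, \kf_j] = \Delta_*([\alpha_{i+j}] \otimes \pi^*(-))$. The task is then to compute $[\alpha_{i+j}] \in \KK_{\quot \times C}$ and identify it with the constant-term expression $\int_{\infty-0} z^{i+j} \wedge^\bullet(zq/V)/[\wedge^\bullet(\CE/z)\wedge^\bullet(zq/\CE)]$. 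Using the description of $\alpha_\ell$ in Proposition \ref{prop:e and f}, this class should emerge from virtual tangent bundle contributions along a common resolution of the two compositions $\te_i \circ \tf_j$ and $\tf_j \circ \te_i$: the normal bundle of the nested Quot scheme will supply the denominator factors $\wedge^\bullet(\CE/z)$ and $\wedge^\bullet(zq/\CE)$ (via the Koszul resolution of the diagonal in a suitable projective bundle), while the numerator $\wedge^\bullet(zq/V)$ records the pullback of the ambient short exact sequence \eqref{eqn:ses intro}.

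The main obstacle is this last $K$-theoretic calculation, and in particular matching the sign convention of the bidirectional arrow in \eqref{eqn:e and f intro} with the two halves of the operator $\int_{\infty - 0}$: the expansion near $z = \infty$ should account for the range $i+j \geq 0$ and the expansion near $z = 0$ for $i+j < 0$, and checking this requires a careful analysis of which pole of the underlying rational generating function contributes in each case. Once the generating function is correctly assembled, the formula for $[\ke_i, \kf_j]$ in Corollary \ref{cor:k-th intro} will follow by extracting coefficients of $z^{i+j}$, and the analogous $fe$ identity is then automatic from the transposed version of part (b).
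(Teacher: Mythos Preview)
Your treatment of the first three relations is correct and matches the paper exactly: decategorify the triangles/isomorphisms of Theorem~\ref{thm:intro main} and read off the $K$-theory identities.

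For the $[\ke_i,\kf_j]$ relation, however, you are overcomplicating the last step. You correctly reduce to showing that $[\alpha_\ell]$ equals the integral expression, and you correctly cite Proposition~\ref{prop:e and f} for the description of $\alpha_\ell$. But then you propose to compute $[\alpha_\ell]$ via ``virtual tangent bundle contributions along a common resolution'' and Koszul resolutions of diagonals. This geometric detour is unnecessary: Proposition~\ref{prop:e and f} already hands you the associated graded of $\alpha_\ell$ as an explicit finite sum of tensor products of $S^k\CE$, $\wedge^{k'}V$, $\det\CE$, $\det V$, and powers of $\CK_C$ (formulas \eqref{eqn:bigger} and \eqref{eqn:smaller}). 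Taking $K$-theory classes of those filtrations gives $[\alpha_\ell]$ directly as a closed-form polynomial in these classes (this is recorded in the paper as \eqref{eqn:rel 7 bis bis}).

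The paper then matches this with the integral expression by a purely formal power series computation (Proposition~\ref{prop:compactly}): for $\ell\ge 0$, one observes the integrand has a zero of order $\ell+r$ at $z=0$ so only the $z=\infty$ expansion contributes; one then rewrites
\[
\frac{\wedge^\bullet(zq/V)}{\wedge^\bullet(\CE/z)\wedge^\bullet(zq/\CE)}
= \frac{\wedge^\bullet(V/zq)}{\wedge^\bullet(\CE/z)\wedge^\bullet(\CE/zq)}\cdot\frac{\det\CE}{\det V}
\]
and expands each factor as $\sum_k [S^k\CE]/z^k$ or $\sum_{k'}(-1)^{k'}[\wedge^{k'}V]/(zq)^{k'}$, extracting the coefficient of $z^{-\ell}$. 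The case $\ell<0$ is symmetric. No geometry beyond what is already packaged in \eqref{eqn:bigger}--\eqref{eqn:smaller} is required; the ``main obstacle'' you flag is a two-line generating function identity.
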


In Subsections \ref{sub:action 1} and \ref{sub:action 2}, we interpret Corollary \ref{cor:k-th intro} as giving an action 
\begin{equation}
\label{eqn:action intro}
U_q^r(L\fsl_2) \curvearrowright \KK_{\quot} = \bigoplus_{d = 0}^{\infty} \KK_{\quot_d}
\end{equation}
of the shifted quantum loop group from Definition \ref{def:quantum loop} (which is a certain $\BZ[q^{\pm 1}]$ integral form of the usual shifted quantum loop group of \cite{ft}). Thus, Theorem \ref{thm:intro main} can be construed as a categorification of the action \eqref{eqn:action intro}.

\subsection{Semi-orthogonal decomposition}
\label{sub:intro 3}

In \cite{mn}, we constructed a basis (of representation theoretic origin) of the cohomology of $\quot_d$, which is indexed by ordered sequences $(0 \leq k_1 \leq \dots \leq k_d < r)$. Such sequences are in bijection with compositions
\begin{equation}
\label{eqn:composition intro}
\bd = \left(d_0,d_1 \dots,d_{r-1} \Big| d_0 + d_1 + \dots + d_{r-1} = d \right)
\end{equation}
where $d_i$ counts how many times $i$ appears in the sequence $(k_1 \leq \dots \leq k_d)$. We will use the latter indexing to construct a semi-orthogonal decomposition of the derived category of $\quot_d$. In what follows, we will write $C^{(d)} = C^d/S_d$ for the $d$-th symmetric power of $C$, and set $C^{(\bd)} = C^{(d_0)} \times C^{(d_1)} \times \dots \times C^{(d_{r-1})}$ for any composition \eqref{eqn:composition intro}.

\begin{theorem}
\label{thm:intro semi}

As $\bd = (d_0,\dots,d_{r-1})$ runs over compositions of $d$, there exist functors
\begin{equation}
\label{eqn:functor intro}
\DD_{C^{(\bd)}} \xrightarrow{\be_{\bd}^{\ered}} \DD_{\quot_d}
\end{equation}
which are fully faithful and semi-orthogonal, i.e. we have natural identifications
\begin{equation}
\label{eqn:fully faithful intro}
\emph{Hom}_{\quot_d} \Big ( \be^{\ered}_{\bd}  (\gamma), \be^{\ered}_{\bd'}  (\gamma') \Big) \cong \begin{cases} 0&\text{if } \bd < \bd' \text{ lexicographically}\\ \emph{Hom}_{C^{(\bd)}} (\gamma, \gamma' ) &\text{if }\bd = \bd' \end{cases}
\end{equation}
The essential images of the functors \eqref{eqn:functor intro} generate $\DD_{\quot_d}$ as a triangulated category.

\end{theorem}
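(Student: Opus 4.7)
The plan is to construct $\be_{\bd}^{\ered}$ by iterating the categorified raising operators $\te_i$ of Subsection~\ref{sub:intro 1}, and to establish semi-orthogonality, full faithfulness, and generation via adjunction combined with Theorem~\ref{thm:intro main}. Starting from $\DD_{\quot_0} = \DD_{\mathrm{pt}}$, for a composition $\bd = (d_0, \ldots, d_{r-1})$ the functor $\be_{\bd}^{\ered}$ will be, up to pull-back from a suitable product of symmetric powers, the iterated composition $\te_0^{d_0} \circ \te_1^{d_1} \circ \cdots \circ \te_{r-1}^{d_{r-1}}$. Specializing Theorem~\ref{thm:intro main}(a) to $i = j$ yields an empty cone, so $\te_i^{d_i}$ is naturally $S_{d_i}$-equivariant in its $d_i$ curve arguments, allowing descent to the symmetric power $C^{(d_i)}$. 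The decoration \emph{red} signals that only indices in $\{0, 1, \ldots, r-1\}$ are used, matching the range appearing in the cohomological basis of \cite{mn}.

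For semi-orthogonality and full faithfulness, I would use that each $\te_i$ admits both adjoints, expressible in terms of $\tf$'s twisted by tautological classes via Grothendieck--Serre duality applied to the lci maps $p_\pm$. Then $\text{Hom}_{\quot_d}(\be_{\bd}^{\ered}(\gamma), \be_{\bd'}^{\ered}(\gamma'))$ is computed by passing the iterated $\tf$-adjoints across iterated $\te$'s using parts~(b) and~(c) of Theorem~\ref{thm:intro main}. The upper-triangular structure in the commutation cones of~(b), combined with the observation that $\tf_j$ applied to any object of $\DD_{\quot_0}$ vanishes (as no nested scheme $\quot_{-1,0}$ exists), forces vanishing of $\text{Hom}$ whenever $\bd < \bd'$ lexicographically, by induction on $|\bd| + |\bd'|$. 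In the diagonal case $\bd = \bd'$ the same procedure collapses the Hom to $\text{Hom}_{C^{(\bd)}}(\gamma, \gamma')$, with the line bundle twists arising in the adjoint of $\te_i$ absorbed using the isomorphism~\eqref{eqn:iso intro}.

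For generation, each $\be_{\bd}^{\ered}$ has admissible image (being a composition of functors each admitting both adjoints), so the triangulated sum of the block images is admissible. Via Chern character, the total rational K-theoretic rank of this sum equals the cohomological rank of $\bigoplus_{\bd} H^*(C^{(\bd)}, \BQ)$, which matches the rank of $H^*(\quot_d, \BQ)$ by the basis result of \cite{mn}. A standard argument then forces the right orthogonal --- admissible in the smooth projective $\DD_{\quot_d}$ with vanishing rational Grothendieck group --- to vanish, yielding generation.

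The principal obstacle will be the combinatorial bookkeeping in the semi-orthogonality step: iteratively commuting many $\tf$'s past many $\te$'s through Theorem~\ref{thm:intro main}(b) produces a filtered complex whose terms must be shown either to vanish (upon landing in $\DD_{\quot_d}$ with $d < 0$, or upon accumulating surplus $\tf$'s acting trivially at $\quot_0$) or to contribute solely to the diagonal term. Organizing this so the lexicographic order emerges naturally from the chosen order of commutation, and correctly handling the twists by $\wedge^r \CE$ and the relative canonicals of $p_\pm$ arising in the Grothendieck--Serre adjoints, is the most delicate part of the argument.
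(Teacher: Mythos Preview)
Your approach to semi-orthogonality and full faithfulness is essentially the paper's approach: one computes the left adjoint of $\be_i$ in terms of $\tf_{-i-r}$ twisted by $\det \CE$ (the paper's Lemma~\ref{lem:adjoint}), then commutes adjoints past the $\be$'s using Theorem~\ref{thm:intro main}(b), observing that $\alpha_\ell = 0$ for $-r < \ell < 0$ so that only the diagonal term survives. The paper packages this as Proposition~\ref{prop:semi}, and the passage from $\be_{\bd}$ to the reduced functor $\be_{\bd}^{\ered}$ (descent to $C^{(\bd)}$) requires an additional $S_{\bd} \times S_{\bd}$-equivariance argument on the correspondence level, which you allude to but do not spell out. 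One point you underestimate is the diagonal case: the paper must prove the nontrivial identity $F^d E^d \cong \CO_{C^d \times_{C^{(d)}} C^d}$ as a correspondence (formula~\eqref{eqn:seek}), which takes a careful induction and an $\mathrm{Ext}^1$ computation; your ``collapses to $\mathrm{Hom}_{C^{(\bd)}}$'' hides this.

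Your generation argument, however, has a genuine gap. You propose: the blocks are admissible, a rational $K$-theory rank count (via \cite{mn}) shows the right orthogonal has trivial rational Grothendieck group, and ``a standard argument'' then forces the orthogonal to vanish. This last step is false in general: admissible subcategories of $\DD_X$ for smooth projective $X$ with trivial $K_0$ (even with trivial Hochschild homology) exist --- these are phantom categories. No rank or Euler-characteristic count can rule them out. The paper avoids this entirely by a direct geometric argument: iterating Lemma~\ref{lem:pi minus}, the full flag space $\quot_{1,\dots,d}$ is a tower of $\BP^{r-1}$-bundles over $C^d$, so Orlov's theorem gives a semi-orthogonal decomposition of $\DD_{\quot_{1,\dots,d}}$ into copies of $\DD_{C^d}$ indexed by all $(k_1,\dots,k_d) \in \{0,\dots,r-1\}^d$. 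Pushing forward and using that every object of $\DD_{\quot_d}$ is a summand of something pulled back from $\quot_{1,\dots,d}$ (via the $S_d$-invariants trick), one gets that the images of all $\be_{k_1}\circ\cdots\circ\be_{k_d}$ generate. Theorem~\ref{thm:intro main}(a) then reorders to non-decreasing sequences, and a direct-summand lemma (the paper's Lemma~\ref{lem:vice versa}, together with a cited result on summands in semi-orthogonal decompositions) completes the argument. You should replace your $K$-theory count with this geometric generation.
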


The composition of the functor \eqref{eqn:functor intro} with the direct image $\DD_{C^d} \rightarrow \DD_{C^{(\bd)}}$ of the obvious finite covering map is precisely the composition
\begin{equation}
\label{eqn:functor intro compose}
\underbrace{\be_{0} \circ \dots \circ \be_{0}}_{d_0 \text{ factors}} \circ \underbrace{\be_{1} \circ \dots \circ \be_{1}}_{d_1 \text{ factors}} \circ \dots \circ  \underbrace{\be_{r-1} \circ \dots \circ \be_{r-1}}_{d_{r-1} \text{ factors}} : \DD_{C^d} \rightarrow \DD_{\quot_d}
\end{equation}
where the functor $\be_i : \DD_{\quot_{d} \times C} \rightarrow \DD_{\quot_{d+1}}$ is given by the same correspondence as the functor $\te_i : \DD_{\quot_{d}} \rightarrow \DD_{\quot_{d+1} \times C}$ of \eqref{eqn:e intro}. Thus, while the functor \eqref{eqn:functor intro compose} categorifies the element $\qe_{0}^{d_{0}} \qe_{1}^{d_1} \dots \qe_{r-1}^{d_{r-1}}$ of the shifted quantum loop group (see Definition \ref{def:quantum loop} for the notation), the functor \eqref{eqn:functor intro} categorifies the divided powers \eqref{eqn:divided powers}.

\begin{remark}
\label{rem:toda}

A semi-orthogonal decomposition of $\DD_{\quot_d}$ indexed by the same data was recently obtained in \cite{toda} by a different method, using categorical wall-crossing for the framed one-loop quiver and the machinery of windows and categorical Hall products. It would be interesting to match the decomposition \cite{toda} with our geometrically explicit construction. 

In a curve context, we note also the recent semiorthogonal decomposition \cite{tt, tevelev} of the derived category of the moduli space of rank 2 stable vector bundles with odd determinant. We speculate that it may be possible to take $\quot_d$ and its derived category as a starting point when approaching the derived category of the moduli space $\mathsf M (r, d)$ of stable bundles of arbitrary coprime rank $r$ and degree $d$, building a parallel to the calculation \cite{bgl} of the Betti numbers of $\mathsf M(r,d)$. We leave this direction for future investigation.  

 In higher dimensions, semiorthogonal decompositions of moduli spaces of sheaves are known in some cases, yet they are typically less transparent from a geometric point of view. For the Hilbert scheme of points on a surface for example, \cite{scala, krug2} work on the orbifold side of the Bridgeland-King-Reid equivalence \cite{bkr, haiman} and the geometry of the original Hilbert scheme is less manifest. Meanwhile, the recent work \cite{pt1, pt2, pt3, pt4, pt5, pt6} develops technically deep semiorthogonal decompositions in connection with categorified BPS invariants. The explicit representation theory and geometric potential for calculating the cohomology of natural tautological objects have not been explored yet.

\end{remark}

\subsection{Cohomology of tautological bundles} 

Recall the projections $\pi : \quot_d \times C \rightarrow \quot_d$ and $\rho : \quot_d \times C \rightarrow C$. If $M$ is a vector bundle of rank $n$ on the curve $C$, we denote by
$$M^{[d]} = \pi_*(\mathcal F \otimes \rho^*(M))
$$
its corresponding tautological vector bundle of rank $nd$ on $\quot_d$. We will note in Section \ref{sub:ex taut} that $M^{[d]}$ appears in the $(d-1, 1, 0, \ldots, 0)$ block of the semi-orthogonal decomposition, specifically 
\begin{equation}
\label{eqn:intro tautarbitrary}
M^{[d]} = \be_{(d-1, 1, 0, \ldots, 0)}^{\red} ( \CO^{(d-1)} \boxtimes M)
\end{equation}
Above and henceforth, we write $L^{(k)} \to C^{(k)}$ for the descent of any line bundle $L^k \to C^k$ under the action of the symmetric group $S_{k}$, and the line bundle $\CO^{(d-1)}$ is simply the $L = \CO_C$ version of this construction. Furthermore, when $M$ is a line bundle, we will establish that the exterior powers $\wedge^\ell M^{[d]}$ are in the $(d-\ell, \ell, 0, \ldots, 0)$ block of the semi-orthogonal decomposition, specifically
\begin{equation}
\label{eqn: intro0 wedges}    
\wedge^{\ell} M^{[d]} = \be_{(d-\ell, \ell, 0, \ldots, 0)}^{\red} (\mathcal O^{(d-\ell)} \boxtimes M^{(\ell)}).
\end{equation}
We may then effectively calculate the morphisms between exterior powers of tautological bundles, showing the following. 

\begin{theorem}
\label{thm:tautological intro}

For any line bundles $M_1,\dots,M_k,M$ on $C$ with $k < r$ and any $\ell_1,\dots,\ell_k,\ell$ $\in \{0,\dots,d\}$, we have an isomorphism of graded vector spaces
\begin{equation}
\label{eqn:iso tautological intro}
\emph{Ext}^\bull_{\quot_d}\left(\wedge^{\ell_1} M_1^{[d]} \otimes \dots \otimes \wedge^{\ell_k} M_k^{[d]}, \wedge^{\ell} M^{[d]} \right) \cong 
\end{equation}
$$
 \cong \bigotimes_{i=1}^k S^{\ell_i} H^\bull(C,M \otimes M_i^{\vee})\bigotimes \wedge^{\ell-\ell_1 - \dots - \ell_k} H^\bull(C,V \otimes M) \bigotimes S^{d-\ell}H^\bull(C,\CO_C)
$$
The right-hand side of the expression above is defined to be 0 if $\ell_1+\dots+\ell_k > \ell$.

\end{theorem}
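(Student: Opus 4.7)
The plan is to leverage the semi-orthogonal decomposition of Theorem \ref{thm:intro semi}. By the identifications \eqref{eqn:intro tautarbitrary} and \eqref{eqn: intro0 wedges}, the target $\wedge^\ell M^{[d]}$ is the image of $\CO^{(d-\ell)} \boxtimes M^{(\ell)}$ under the fully faithful functor $\be^{\red}_{(d-\ell, \ell, 0, \ldots, 0)}$, so it lives in a single block of the decomposition. Applying the right adjoint $R^{\red}_\bd$ of this fully faithful embedding reduces the Ext computation to
$$
\text{Ext}^\bull_{C^{(d-\ell)} \times C^{(\ell)}}\!\left(R^{\red}_{(d-\ell, \ell, 0, \ldots, 0)}\!\left(\bigotimes_{i=1}^{k} \wedge^{\ell_i} M_i^{[d]}\right),\, \CO^{(d-\ell)} \boxtimes M^{(\ell)}\right).
$$
The core task is to identify the complex obtained by applying $R^{\red}_{(d-\ell, \ell, 0, \ldots, 0)}$ to the tensor product on the left.

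\medskip

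I would proceed by induction on $k$, stripping off one tautological factor at a time. The composition formula \eqref{eqn:functor intro compose} writes $\be^{\red}_\bd$ as (a symmetrization of) $\be_0^{d-\ell} \circ \be_1^{\ell}$. By Grothendieck-Verdier duality, the right adjoints of $\be_0$ and $\be_1$ are twisted versions of the analogous $\tf$-type functors built from the same nested Quot correspondences, so the problem becomes one of commuting these adjoints past the $\tm_j$-type operations (tensor product with exterior powers of $\CE$, which produce exterior powers of the tautological bundles via the defining sequence \eqref{eqn:ses intro}). The required commutation follows from part (c) of Theorem \ref{thm:intro main} by adjunction. Each application of this commutation produces one of two contributions: a pairing of some $M_i$ against a copy of $M$ sitting in the target, contributing $H^\bull(C, M \otimes M_i^\vee)$; or a coupling of an unmatched copy of $M$ against the universal bundle $V$ through \eqref{eqn:ses intro}, contributing $H^\bull(C, V \otimes M)$. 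The hypothesis $k < r$ ensures that the $V$-couplings are captured without degeneracy.

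\medskip

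Accumulating these pairings produces the symmetric power $S^{\ell_i} H^\bull(C, M \otimes M_i^\vee)$ for each $i$: the $\ell_i$ factors of $M_i$ (originally antisymmetrized inside $\wedge^{\ell_i} M_i^{[d]}$) match with $\ell_i$ antisymmetrized factors of $M$, and the sign cancellation across the pairing forces a symmetric combination after descent to the symmetric power. The $\ell - \ell_1 - \dots - \ell_k$ unpaired copies of $M$ remain antisymmetrized and yield the factor $\wedge^{\ell - \sum \ell_i} H^\bull(C, V \otimes M)$. Finally, the trivial block $C^{(d-\ell)}$ contributes $S^{d-\ell} H^\bull(C, \CO_C)$ via the classical identity $H^\bull(C^{(n)}, L^{(n)}) = S^n H^\bull(C, L)$ for any line bundle $L$ on $C$.

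\medskip

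The main obstacle is to establish the adjoint-side commutation between $\tm_j$ and the right adjoints of $\be_0, \be_1$ with enough precision that the inductive bookkeeping produces the correct symmetrizations on the $M_i$ pairings and the antisymmetrization on the unpaired $M$'s without spurious error terms. Once that relation is in hand, the remainder of the argument reduces to a symmetric-function calculation on symmetric powers of $C$.
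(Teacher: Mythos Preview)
Your proposal aims in a natural direction but differs substantially from the paper's proof, and it contains a genuine gap in the inductive mechanism.

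First a minor point: to pass from $\text{Ext}_{\quot_d}(A,\be^{\red}_{\bd}(B))$ to an Ext on $C^{(\bd)}$ you need the \emph{left} adjoint of $\be^{\red}_{\bd}$, not the right adjoint. The paper does construct this left adjoint explicitly as $\tf_{\bd}^{'\red}$ in \eqref{eqn:the functors 2}, so this is only a terminological slip.

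The real gap is in your inductive step. You propose to commute the $\tf$-type adjoints past ``$\tm_j$-type operations'' using Theorem~\ref{thm:intro main}(c), asserting that tensoring with $\wedge^{\ell_i} M_i^{[d]}$ is such an operation via the sequence \eqref{eqn:ses intro}. But $\tm_j$ is tensoring with $\wedge^j\CE$ on $\quot_d\times C$, whereas $\wedge^{\ell_i} M_i^{[d]}$ lives on $\quot_d$ and is a pushforward $\pi_*(\CF\otimes\rho^*M_i)$ followed by an exterior power. These are not related by any direct resolution, and Theorem~\ref{thm:intro main}(c) gives you no handle on the functor ``tensor with $\wedge^{\ell_i} M_i^{[d]}$''. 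Your sketch does not explain how to bridge this, and the sign/symmetrization bookkeeping you describe has no concrete mechanism behind it.

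The paper takes a different and more direct route, bypassing the adjoint calculus entirely. Using Proposition~\ref{prop:tautological bundle} it writes $\wedge^\ell M^{[d]}$ as a pushforward of an explicit line bundle from the nested Quot scheme $\quot_{\ell,d}$, then pushes the Ext computation along $p_-\times s_{(d-\ell)}:\quot_{\ell,d}\to\quot_\ell\times C^{(d-\ell)}$. The hypothesis $k<r$ enters here precisely: after resolving each $M_i^{[d]}$ via $0\to M_{i,d-\ell}\to M_i^{[d]}\to M_i^{[\ell]}\to 0$, the unwanted pieces vanish because $p_-\times s_{(d-\ell)}$ is a tower of $\BP^{r-1}$-bundles and \eqref{eqn:push d} kills $\CO(-n)$ for $0<n<r$. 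This reduces to $\ell=d$. A parallel pushforward along $\quot_{d-\ell_1,d}\to\quot_{d-\ell_1}\times C^{(\ell_1)}$, using the dual formula \eqref{eqn:wedge to e dual}, strips off one $\ell_i$ at a time and produces the $S^{\ell_i}H^\bull(C,M\otimes M_i^\vee)$ factors. The base case $\ell=d$, $\ell_1=\dots=\ell_k=0$ is Proposition~\ref{prop:tautological}, whose proof is an explicit $S_d$-equivariant computation of $Rs_{d*}(\CL_1\otimes\dots\otimes\CL_d)$ on $C^d$; this is where the exterior power $\wedge^d H^\bull(C,V\otimes M)$ and the sign character actually arise, and it is the technical heart of the argument.
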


The Euler characteristic version of \eqref{eqn:iso tautological intro} was established in \cite{os} by equivariant localization. The cohomological identification \eqref{eqn:iso tautological intro} was conjectured in \cite{os, krug1}, and partial cases were established in \cite{mos, krug1}. Using \eqref{eqn:intro tautarbitrary}, the semi-orthogonal decomposition of Theorem \ref{thm:intro semi}, also provides an alternate proof of Theorem 1.5 in \cite{krug1}. Thus, we think of Theorem \ref{thm:intro semi} on the semi-orthogonal decomposition and Theorem \ref{thm:tautological intro} on cohomology of tautological bundles as two closely related (albeit logically independent) results. In particular, formula \eqref{eqn: intro0 wedges} shows that exterior powers of tautological bundles lie in a given block of the semi-orthogonal decomposition.

\medskip

We note the parallelism between Theorem \ref{thm:tautological intro} and the calculation of the cohomology of tautological bundles in the same setup of the Hilbert scheme $S^{[d]}$ of points on a smooth projective surface $S$.
In particular, as proven in \cite{scala, krug2}, 
$$
H^\bullet (S^{[d]}, \wedge^\ell M^{[d]}) \simeq \wedge^\ell H^\bullet (S, M) \otimes S^{d- \ell} H^\bullet (S, \mathcal O_S),
$$
which closely resembles \eqref{eqn:iso tautological intro} when $\ell_1= \dots =\ell_k = 0$. 
The similarity of these two formulas on the level of Euler characteristics was first pointed out in \cite{os}, where the question of calculating the cohomology of $\wedge^\ell M^{[d]}$ on $\quot_d$ was also raised. 
From a technical point of view, the surface result above was proved by describing the exterior powers of tautological bundles via the Bridgeland-King-Reid-Haiman equivalence \cite{bkr, haiman}. Our result for the Quot scheme of a curve is proved by expressing exterior powers of tautological bundles as images of very simple objects under the natural quantum $L\mathfrak{sl}_2$ functors, which makes possible the calculation of their cohomology. 

\medskip

Among other cohomology studies, we mention also the calculation of the cohomology of the tangent bundle of the Quot scheme carried out in \cite{bgs}.

\medskip

Finally, we note that by taking linear combinations of products of exterior powers, we could replace the first argument in the Ext group of \eqref{eqn:iso tautological intro} by any product of Schur functors
$$
S^{\lambda_1} M_1^{[d]} \otimes \dots \otimes S^{\lambda_k} M_k^{[d]}
$$
where the sum of the leading components of the partitions $\lambda_1,\dots,\lambda_k$ is less than $r$. In principle, the machinery of Sections \ref{sec:steppingstone} and \ref{sec:semi} allows for the computation of $\text{Ext}$ groups of any Schur functor of tautological bundles in either of the two arguments, but the answer is recursive, and compact formulas cannot be expected in general.

\subsection{Acknowledgements} We thank Pavel Etingof, Eugene Gorsky, Anthony Licata, Dragos Oprea, Zsolt Patakfalvi, Claudiu Raicu, Steven Sam, Shubham Sinha, and Jenia Tevelev for interesting conversations related to the present paper. A.M. would like to thank the organizers of ELGA V in Brazil for a wonderful visit in August 2024. The work of A.N. was partially supported by the NSF grant DMS-1845034, the MIT Research Support Committee and the PNRR grant CF 44/14.11.2022 titled ``Cohomological Hall algebras of smooth surfaces and applications". He would like to thank the Simion Stoilow Institute of Mathematics of the Romanian Academy (IMAR, Bucharest) for their hospitality while the present paper was being written.

\section{Fundamentals}
\label{sec:fundamentals}

\vskip.2in

\subsection{Derived categories of projective bundles}

For a rank $r$ vector bundle $V$ on a smooth variety $X$, we may define its projectivization
\begin{equation}
\label{eqn:projectivization general}
\BP_X(V) = \text{Proj}_X(\text{Sym}^\bullet(V))
\end{equation}
It is endowed with a tautological line bundle $\CO(1)$, a projection map
$$
\pi : \BP_X(V) \rightarrow X
$$
(which is a $\BP^{r-1}$-fibration) and a short exact sequence
\begin{equation}
\label{eqn:taut ses general}
0 \rightarrow \CG \rightarrow \pi^*(V) \rightarrow \CO(1) \rightarrow 0.
\end{equation}
Furthermore $\CG \otimes \CO(-1)$ is the relative cotangent bundle of the morphism $\pi$, so
\begin{equation}
\label{eqn:canonical proj}
\frac {\CK_{\BP_X(V)}}{\pi^*(\CK_X)} \cong \pi^*(\det V) \otimes \CO(-r).
\end{equation}
where $\CK_Z$ denotes the canonical line bundle of a smooth variety $Z$. We recall the following standard result. 

\begin{lemma}
\label{lem:push d}
If $\pi : \BP_X(V) \rightarrow X$ is the projectivization of a rank $r$ vector bundle, then
\begin{equation}
\label{eqn:push d}
R\pi_*(\CO(k)) = \begin{cases} S^k V &\text{if } k \geq 0 \\ 0 &\text{if }-r < k < 0 \\ S^{-k-r} V^\vee \otimes \det(V^\vee)[-r+1] &\text{if } k \leq -r \end{cases}
\end{equation}

\end{lemma}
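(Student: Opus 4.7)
The plan is to analyze the three ranges of $k$ in turn, using base change to reduce the vanishing and rank computations to the classical case of $\BP^{r-1}$, and using relative Serre duality to identify the naturality of the answer in the negative range.

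First, I would recall that $R\pi_*\CO(k)$ is compatible with arbitrary base change $X' \to X$ because $\pi$ is flat and projective and $\CO(k)$ is flat over $X$. In particular, for any closed point $x \in X$, the stalk/fiber of $R^i\pi_*\CO(k)$ at $x$ recovers $H^i(\BP^{r-1}, \CO(k))$. This immediately pins down the vanishing ranges: if $-r < k < 0$ then the whole complex $R\pi_*\CO(k)$ is zero since every fiber has vanishing cohomology. Similarly, for $k \geq 0$ only $\pi_*\CO(k)$ can be nonzero, and for $k \leq -r$ only $R^{r-1}\pi_*\CO(k)$ can be nonzero.

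For $k \geq 0$, the identification $\pi_*\CO(k) \cong S^k V$ is built into the definition $\BP_X(V) = \mathrm{Proj}_X(\mathrm{Sym}^\bull V)$: by construction, the graded $\CO_X$-algebra $\bigoplus_{k \geq 0} \pi_*\CO(k)$ is naturally isomorphic to $\mathrm{Sym}^\bull V$. Combined with the vanishing of $R^{>0}\pi_*\CO(k)$ noted above, this settles the first case.

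For $k \leq -r$, I would apply relative Grothendieck--Serre duality for the smooth projective morphism $\pi$ of relative dimension $r-1$. This gives
\[
R^{r-1}\pi_*\CO(k) \;\cong\; \bigl(\pi_*(\CO(-k) \otimes \omega_\pi)\bigr)^\vee.
\]
Using the formula $\omega_\pi = \pi^*(\det V) \otimes \CO(-r)$ recorded in \eqref{eqn:canonical proj}, together with the projection formula, one has
\[
\pi_*(\CO(-k) \otimes \omega_\pi) \;=\; \det V \otimes \pi_*\CO(-k-r) \;=\; \det V \otimes S^{-k-r} V,
\]
where the last equality uses the already-established case $k \geq 0$ applied to the nonnegative integer $-k-r$. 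Dualizing and placing the result in cohomological degree $r-1$ yields $S^{-k-r} V^\vee \otimes \det(V^\vee)[-r+1]$, matching the claim.

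The only delicate point is really the bookkeeping for relative Serre duality (in particular, keeping track of the shift and the twist by $\det V$ correctly); everything else is either the definition of $\mathrm{Proj}$ or reduction to the classical cohomology of projective space.
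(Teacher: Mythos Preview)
Your proof is correct. The paper does not actually prove this lemma; it is introduced with ``We recall the following standard result'' and stated without proof. Your argument---base change to fibers for the vanishing ranges, the definition of $\mathrm{Proj}$ for $k \geq 0$, and relative Serre duality combined with \eqref{eqn:canonical proj} for $k \leq -r$---is the standard one and is carried out cleanly.
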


A well-known theorem of Orlov (\cite{orlov}) states that the derived category of $\BP_X(V)$ admits a semi-orthogonal decomposition
\begin{equation}
\label{eqn:orlov}
\DD_{\BP_X(V)} = \Big \langle \DD_X \otimes \CO, \DD_X \otimes \CO(1), \dots , \DD_X \otimes \CO(r-1)\Big \rangle
\end{equation}
where $\DD_X$ is identified with a full subcategory of $\DD_{\BP_X(V)}$ via $\pi^*$. 

\subsection{Virtual projective bundles} 
\label{sub:virtual}

Suppose we have a map of vector bundles
\begin{equation}
\label{eqn:map of v b}
\varphi : W \rightarrow V 
\end{equation}
on a smooth projective scheme $X$. If the composition
\begin{equation}
\label{eqn:are you regular}
\sigma : \pi^*(W) \xrightarrow{\pi^*(\varphi)} \pi^*(V) \longrightarrow \CO(1) \, \, \, \text{on} \, \, \, \BP_X(V) 
\end{equation}
is a regular section of the vector bundle $\pi^*(W^\vee) \otimes \CO(1)$, then we denote its zero locus by $\BP_X(V-W)$ and call it a virtual projectivization. We have a commutative diagram
$$
\xymatrix{
\BP_X(V-W) \ar[dr]_-{\pi'} \ar@{^{(}->}[r]^-{\iota} & \BP_X(V) \ar[d]^{\pi} \\
& X}
$$
where $\iota$ is the zero locus of $\sigma$. One can prove a virtual analogue of Lemma \ref{lem:push d} by writing
$$
R\pi'_*(\CO(k)) = R\pi_* \Big(\text{Koszul}(\pi^*(W) \otimes \CO(-1), \sigma ) \otimes \CO(k) \Big)
$$
and then applying \eqref{eqn:push d} to compute the expression in the right-hand side. The result is quite simple if $\text{rank } W < \text{rank } V$ and a little messy if $\text{rank } W > \text{rank } V$. In the present paper, we will actually need the boundary case between these two extremes, specifically the following result (see \cite[Lemma 3.4]{zhao1} for a proof).

\begin{lemma}
\label{lem:push d virtual}

Assume $V$ and $W$ are vector bundles of the same rank on a smooth variety $X$, together with a map \eqref{eqn:map of v b} such that \eqref{eqn:are you regular} is a regular section. Then we have
\begin{equation}
\label{eqn:push d virtual}
R\pi'_*(\CO(k)) = \begin{cases} \left( \dots \xrightarrow{\varphi} S^{k-1} V \otimes \wedge^1 W \xrightarrow{\varphi} S^k V \otimes \wedge^0 W \right) &\text{if } k > 0 \\  \frac {\det W}{\det V} \xrightarrow{\det \varphi} \CO_X  &\text{if }k = 0 \\ \left( S^{-k} V^\vee \otimes \wedge^0 W \xrightarrow{\varphi^\vee} S^{-k-1} V^\vee \otimes \wedge^1 W^\vee \xrightarrow{\varphi^\vee} \dots \right) \otimes \frac {\det W}{\det V} &\text{if } k \leq -r \end{cases}
\end{equation}
(the complexes above are both finite, as they terminate whenever we encounter $S^iV$, $S^iV^\vee$ for $i<0$ or $\wedge^jW$, $\wedge^j W^\vee$ for $j > \text{rank }W$).
 
\end{lemma}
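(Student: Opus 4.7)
The plan is to resolve $\iota_*\CO_{\BP_X(V-W)}$ on $\BP_X(V)$ via the Koszul complex associated to the regular section $\sigma$ of the vector bundle $\pi^*(W^\vee) \otimes \CO(1)$, then twist by $\CO(k)$ and push forward along $\pi$. Explicitly, one has the quasi-isomorphism
$$\iota_*\CO_{\BP_X(V-W)} \simeq \Bigl[\pi^*(\wedge^r W) \otimes \CO(-r) \to \dots \to \pi^*W \otimes \CO(-1) \to \CO\Bigr],$$
with differentials contraction by $\sigma$, and after twisting by $\CO(k)$ each term takes the form $\pi^*(\wedge^i W) \otimes \CO(k-i)$ sitting in Koszul degree $-i$. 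Applying the projection formula and $R\pi_*$ reduces the problem to evaluating $R\pi_*\CO(k-i)$ for $i = 0, \dots, r$, which is precisely the content of Lemma \ref{lem:push d}.

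Because Lemma \ref{lem:push d} asserts that $R\pi_*\CO(m)$ is concentrated in a single cohomological degree for every $m$, the hyperdirect image spectral sequence degenerates and the pushforward is computed by a single complex whose terms are read off from the three cases of that lemma. For $k \geq 1$, only the indices $0 \leq i \leq k$ contribute, each producing a copy of $\pi^*(\wedge^i W) \otimes S^{k-i}V$ in total degree $-i$; collecting the terms reproduces the complex in the first line of \eqref{eqn:push d virtual}. For $k \leq -r$, every $0 \leq i \leq r$ contributes, each with the additional shift $[-r+1]$ coming from the dual case of Lemma \ref{lem:push d} together with an extra factor of $\det V^\vee$; a change of index $j = r - i$ and the canonical isomorphism $\wedge^{r-j}W \cong \wedge^j W^\vee \otimes \det W$ rewrite the resulting complex in the form displayed in the third line of \eqref{eqn:push d virtual}. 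For $k = 0$ the only surviving Koszul contributions are $i = 0$ (yielding $\CO_X$ in total degree $0$) and $i = r$ (yielding $\wedge^r W \otimes \det V^\vee$ sitting in Koszul degree $-r$ but cohomological fibre degree $r-1$, hence in total degree $-1$), producing the displayed two-term complex.

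Finally, I would identify the differentials of the output complex. They arise from the Koszul differentials by functoriality of the spectral sequence, and since the section $\sigma$ is by construction $\pi^*\varphi$ composed with the tautological quotient $\pi^*V \twoheadrightarrow \CO(1)$, naturality of the isomorphism in Lemma \ref{lem:push d} in the bundle $V$ translates contraction by $\sigma$ into the maps induced by $\varphi$ on symmetric and exterior powers (respectively by $\varphi^\vee$ in the $k \leq -r$ case). In particular, for $k = 0$ the sole differential is the top exterior power of $\varphi$, namely $\det\varphi$.

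The main technical obstacle is the bookkeeping of the shift $[-r+1]$ across the Koszul terms for $k \leq -r$: one must verify that the cohomological degrees of these shifted pushforwards interleave correctly with the Koszul grading so that the spectral sequence collapses to a single complex (rather than producing nontrivial higher differentials), and then reindex via $j = r - i$ to match the form stated. For $k \geq 1$ and $k = 0$ the corresponding bookkeeping is essentially trivial, since only the degree-zero part of each termwise pushforward contributes.
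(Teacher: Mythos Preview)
Your proposal is correct and follows precisely the Koszul approach that the paper itself outlines immediately before the lemma (the paper then cites \cite[Lemma 3.4]{zhao1} rather than carrying out the computation). One small imprecision: for $k=0$ the spectral sequence does not degenerate at $E_2$ but rather has a single nontrivial $d_r$ differential connecting the two surviving terms, so your identification of that differential with $\det\varphi$ requires a separate argument (e.g.\ functoriality in $\varphi$ or a direct check) rather than following from the $d_1$ analysis you sketch for the other cases.
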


\subsection{$K$-theory}

For a smooth projective variety $X$ as before, let us write $K_X$ for the Grothendieck group of coherent sheaves (alternatively locally free sheaves, due to the smoothness of $X$) on $X$. It is a ring with respect to derived tensor product. The $K$-theoretic shadow of the semi-orthogonal decomposition \eqref{eqn:orlov} is the isomorphism
$$
K_{\BP_X(V)} \cong K_X \cdot [\CO] \oplus K_X \cdot [\CO(1)] \oplus \dots \oplus K_X \cdot [\CO(r-1)] 
$$
of $K_X$-modules, where $K_{\BP_X(V)}$ is made into a $K_X$-module via $\pi^*$. As a ring, we have
\begin{equation}
\label{eqn:k of proj}
K_{\BP_X(V)} \cong K_X[z^{\pm 1}] \Big/ \wedge^\bullet \left(\frac Vz\right)
\end{equation}
where $z$ corresponds to $[\CO(1)]$, and we write
\begin{equation}
\label{eqn:wedge}
\wedge^\bullet\left(\frac Vz\right) = \sum_{i=0}^r \frac {[\wedge^i V]}{(-z)^i}
\end{equation}
The $K$-theory class \eqref{eqn:wedge} only depends on the $K$-theory class of $V$. Thus, we can extend this notation to formal differences of vector bundles on $X$, via the formula
\begin{equation}
\label{eqn:wedge virtual}
\wedge^\bullet\left(\frac {V-W}z\right) = \frac {\wedge^\bullet\left(\frac Vz\right)}{\wedge^\bullet\left(\frac Wz\right)}
\end{equation}
The $K$-theoretic version of \eqref{eqn:push d} is the following formula
\begin{equation}
\label{eqn:push k}
\pi_*([\CO(k)]) = \int_{\infty - 0} z^k \wedge^\bullet \left(\frac {-V}z\right)
\end{equation}
where the notation in the right-hand side means that we pick out the constant term of the integrand when expanded near $z = \infty$ and when expanded near $z = 0$, and subtract the two results. Similarly, the $K$-theoretic version of \eqref{eqn:push d virtual} for virtual projectivizations is 
\begin{equation}
\label{eqn:push k virtual}
\pi'_*([\CO(k)]) = \int_{\infty - 0} z^k \wedge^\bullet \left(\frac {W-V}z\right)
\end{equation}

\vskip.2in

\subsection{Nested Quot schemes}
\label{sub:basic nested}

Fix a smooth projective curve $C$, and define the Quot scheme of length $d$ quotients of a fixed rank $r$ vector bundle $V$ as in Subsection \ref{sub:intro 1}.

\begin{definition}
\label{def:nested}

For any positive integers $d_1 < \dots < d_n$, the nested Quot scheme is \begin{equation}
\label{eqn:def nested general}
\quot_{d_1,\dots,d_n} = \Big\{ E^{(d_n)} \subset E^{(d_{n-1})} \subset \dots \subset E^{(d_2)} \subset E^{(d_1)} \subset V \Big\}
\end{equation}
with $\text{length } V/E^{(i)} = i$.

\end{definition}

It is well known that $\quot_{d_1,\dots,d_n}$ is a smooth fine moduli space of dimension $rd_n,$ equipped with the universal flag of subbundles
$$\CE^{(d_n)} \subset \CE^{(d_{n-1})} \subset \dots \subset \CE^{(d_2)} \subset \CE^{(d_1)} \subset \rho^*(V)\, \, \, \text{on} \, \, \, \quot_{d_1,\dots,d_n} \times C.$$
There exist line bundles
\begin{equation}
\label{eqn:line bundles}
\xymatrix{\CL_1, \dots, \CL_n
\ar@{.>}[d]  \\
\quot_{d_1,\dots,d_n}}
\end{equation}
with fibers $\det \Gamma(C,E^{(d_{n-1})}/E^{(d_n)}), \dots, \det \Gamma(C,V/E^{(d_1)})$, that is
\begin{equation}
\CL_i = \det \left (R\pi_*\CE^{(d_{n-i})} - R\pi_*\CE^{(d_{n-i+1})} \right), \, \, \, 1 \leq i \leq n. 
\end{equation}
There is a map
\begin{equation}
\label{eqn:support}
\quot_{d_1,\dots,d_n} \rightarrow C^{(d_n-d_{n-1})} \times C^{(d_{n-1}-d_{n-2})}\times \dots \times C^{(d_1)}
\end{equation}
which keeps track of the support points of the quotients $E^{(d_{n-1})}/E^{(d_n)}, \dots, V/E^{(d_1)}$, respectively. When $d_1,\dots,d_n$ are consecutive integers, we will denote flags as
\begin{equation}
\label{eqn:def nested}
E^{(d+k)} \stackrel{x_1}\subset E^{(d+k-1)} \stackrel{x_2}\subset \dots \stackrel{x_{k-1}}\subset E^{(d+1)} \stackrel{x_k}\subset E^{(d)} \subset V
\end{equation}
where $E' \stackrel{x}\subset E$ means that the quotient $E/E'$ is isomorphic to the skyscraper sheaf at $x \in C$. The most special case is that of $\quot_{d,d+1}$, which parameterizes flags
$$
E^{(d+1)} \stackrel{x}\subset E^{(d)} \subset V
$$
We will often write $\CL = \CL_1$ for the line bundle on $\quot_{d,d+1}$ which parameterizes $\Gamma(C,E^{(d)}/E^{(d+1)})$, and we will write $p_C : \quot_{d,d+1} \rightarrow C$ for the map which records the point $x$ (i.e. the first component of the support map \eqref{eqn:support}).

\subsection{Morphisms between nested Quot schemes}

Since one can always forget part of the flag of subsheaves in \eqref{eqn:def nested general}, there are numerous maps between the schemes $\quot_{d_1,\dots,d_n}$. For instance, an important role will be played by the map
\begin{equation}
\label{eqn:pi minus}
p_- \times p_C : \quot_{d_1,\dots,d_n,d_n+1} \rightarrow \quot_{d_1,\dots,d_n} \times C
\end{equation}
which forgets the sheaf $E^{(d_n+1)}$, but remembers the support point of $E^{(d_n)}/ E^{(d_n+1)}$.

\begin{lemma}
\label{lem:pi minus}

The map $p_- \times p_C$ induces an isomorphism
\begin{equation}
\label{eqn:lem pi minus}
\quot_{d_1,\dots,d_n,d_n+1} \cong \BP_{\quot_{d_1,\dots,d_n} \times C} \left(\CE^{(d_n)} \right)
\end{equation}
where $\CE^{(d_n)}$ denotes the universal sheaf parameterizing the sheaf denoted by $E^{(d_n)}$ in \eqref{eqn:def nested general}. The line bundle $\CL_1$ on the LHS of \eqref{eqn:lem pi minus} is identified with $\CO(1)$ on the RHS.

\end{lemma}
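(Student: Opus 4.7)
My plan is to identify the two sides functorially, using the universal property that a $T$-point of $\BP_Y(\CE^{(d_n)})$ (with $Y := \quot_{d_1,\dots,d_n} \times C$) consists of a morphism $f : T \to Y$ together with a line bundle quotient $f^*\CE^{(d_n)} \twoheadrightarrow L$ on $T$. I will write down mutually inverse functorial bijections on $T$-points, conclude by Yoneda, and then match the tautological line bundles. Note that $\CE^{(d_n)}$ is locally free of rank $r$ on $Y$ since it is a saturated rank-$r$ subsheaf of the locally free sheaf $\rho^*V$ on the smooth surface $Y$.

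\textbf{From the flag to the projectivization.} Given a $T$-point of $\quot_{d_1,\dots,d_n,d_n+1}$, the quotient $\CG := \CE^{(d_n)}/\CE^{(d_n+1)}$ is $T$-flat of relative length $1$ on $T \times C$. Its scheme-theoretic support is therefore finite flat of degree $1$ over $T$, i.e.\ a relative Cartier divisor which is the graph $\Gamma \subset T \times C$ of a unique morphism $\sigma : T \to C$. Combined with the $T$-point $g : T \to \quot_{d_1,\dots,d_n}$ obtained by forgetting $\CE^{(d_n+1)}$, this produces $f = (g, \sigma) : T \to Y$, which recovers $p_- \times p_C$. The restriction $L := \CG|_\Gamma$ is a line bundle on $T \cong \Gamma$, and restricting the surjection $\CE^{(d_n)} \twoheadrightarrow \CG$ to $\Gamma$ yields the required line bundle quotient $f^* \CE^{(d_n)} \twoheadrightarrow L$.

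\textbf{From the projectivization to the flag.} Conversely, given $(f = (g,\sigma),\, f^*\CE^{(d_n)} \twoheadrightarrow L)$, let $i : \Gamma \hookrightarrow T \times C$ denote the graph of $\sigma$. By $i_*$--$i^*$ adjunction, the line quotient corresponds to a morphism $\CE^{(d_n)} \to i_*L$ on $T \times C$; this map is surjective, as its cokernel is supported on $\Gamma$ and vanishes there. Setting $\CE^{(d_n+1)} := \ker(\CE^{(d_n)} \twoheadrightarrow i_*L)$, the $T$-flatness of $i_*L$ (inherited from the fact that $\Gamma$ is a relative Cartier divisor) together with the local freeness of $\CE^{(d_n)}$ forces $\CE^{(d_n+1)}$ to be $T$-flat of relative colength $1$ inside $\CE^{(d_n)}$. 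Combined with the flag inherited from $g$, this produces a $T$-point of $\quot_{d_1,\dots,d_n,d_n+1}$.

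\textbf{Conclusion and the line bundle.} The two constructions are tautologically mutually inverse on $T$-points, so by Yoneda they come from mutually inverse morphisms of schemes, and the resulting isomorphism intertwines $p_- \times p_C$ with the structure map of the projectivization. For the line bundle identification, the defining formula gives $\CL_1 = \det R\pi_*\CG$; since $\CG$ is finite over $T$ its higher direct images vanish, whence $R\pi_*\CG = \pi_*\CG \cong L$, which by the first construction above is exactly the pullback of $\CO(1)$ from $\BP_Y(\CE^{(d_n)})$. The only potentially delicate point in the argument is identifying the support of the length-one quotient $\CG$ with the graph of a genuine morphism $T \to C$; however, this is the standard fact that $\mathrm{Hilb}^1(C) \cong C$, and everything else reduces to routine manipulations of relative Cartier divisors.
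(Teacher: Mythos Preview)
Your proof is correct and follows the standard functorial approach that the paper gestures at (the paper itself gives no proof, merely calling the result ``quite straightforward'' and referring to \cite{mn} for a special case). One minor slip: you call $Y = \quot_{d_1,\dots,d_n} \times C$ a ``smooth surface,'' but it has dimension $rd_n + 1$; the local freeness of $\CE^{(d_n)}$ follows instead from its being flat over $\quot_{d_1,\dots,d_n}$ with locally free restriction to each fiber (a torsion-free sheaf on a smooth curve), which is the argument you essentially have in mind anyway.
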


The result above is quite straightforward (see \cite{mn} for a special case, which encapsulates all the features of the general argument). A related scenario is that one considers the following map for all non-negative integers $d,d'$ such that $d+1 < d'$ 
\begin{equation}
\label{eqn:pi minus general}
p_- \times p_C : \quot_{\dots,d,d+1,d',\dots} \rightarrow \quot_{\dots,d,d',\dots} \times C
\end{equation}
which forgets the sheaf $E^{(d+1)}$, but remembers the support point of $E^{(d)}/ E^{(d+1)}$. In \eqref{eqn:pi minus general}, the ellipses in the subscript of $\quot$ denote any sequences of integers smaller than $d$ or greater than $d'$ (which are the same in both domain and codomain).

\begin{lemma}
\label{lem:pi minus general}

The map $p_- \times p_C$ of \eqref{eqn:pi minus general} induces an isomorphism
\begin{equation}
\label{eqn:lem pi minus general}
\quot_{\dots,d,d+1,d',\dots} \cong \BP_{\quot_{\dots,d,d',\dots} \times C}\left(\CE^{(d)}/\CE^{(d')} \right)
\end{equation}
The line bundle on the LHS parameterizing $\Gamma(C,E^{(d)}/E^{(d+1)})$ is identified with $\CO(1)$.

\end{lemma}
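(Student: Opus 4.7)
The plan is to deduce Lemma \ref{lem:pi minus general} from Lemma \ref{lem:pi minus} by combining a base-change argument with a closed-immersion argument. Write $A = \quot_{\dots,d}$ for the nested Quot scheme remembering only the initial flag $\dots \subset E^{(d)} \subset V$, and set $A' = \quot_{\dots,d,d',\dots}$ and $B = \quot_{\dots,d,d+1}$. Lemma \ref{lem:pi minus}, applied to $B \to A$, gives an isomorphism $B \cong \BP_{A \times C}(\CE^{(d)})$ with $\CL_1$ identified with $\CO(1)$. Base-changing this along the forgetful map $A' \to A$, and using $(A \times C) \times_A A' \cong A' \times C$, yields
$$
B \times_A A' \;\cong\; \BP_{A' \times C}\bigl(\CE^{(d)}\bigr),
$$
still with $\CO(1)$ being the line bundle whose restriction parameterizes $\Gamma(C, E^{(d)}/E^{(d+1)})$. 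Note that $B \times_A A'$ represents the functor of flags in $A'$ equipped with an auxiliary colength-one subsheaf $E^{(d+1)} \subset E^{(d)}$, with no compatibility between $E^{(d+1)}$ and $E^{(d')}$.

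Next, I would show that the nested scheme $\quot_{\dots,d,d+1,d',\dots}$ is the closed subscheme of $B \times_A A'$ cut out by the condition $E^{(d')} \subset E^{(d+1)}$. Under the description $B \times_A A' \cong \BP_{A' \times C}(\CE^{(d)})$, a point corresponds to a rank-one quotient $q : \CE^{(d)} \twoheadrightarrow \CO(1)$ whose kernel is $E^{(d+1)}$. The inclusion $E^{(d')} \subset E^{(d+1)}$ then translates into the vanishing of the composition $\CE^{(d')} \hookrightarrow \CE^{(d)} \xrightarrow{q} \CO(1)$, equivalently into the condition that the universal quotient factors through $\CE^{(d)}/\CE^{(d')}$. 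By the functoriality of $\text{Proj}(\text{Sym}^\bullet -)$ applied to the surjection $\CE^{(d)} \twoheadrightarrow \CE^{(d)}/\CE^{(d')}$, the locus of such quotients is precisely the closed subscheme $\BP_{A' \times C}(\CE^{(d)}/\CE^{(d')}) \hookrightarrow \BP_{A' \times C}(\CE^{(d)})$. This yields the claimed isomorphism \eqref{eqn:lem pi minus general}, and the identification $\CL_1 = \CO(1)$ persists because $\CO(1)$ on the smaller projectivization is the restriction of $\CO(1)$ from the larger one.

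The one subtlety requiring care is matching the two scheme structures on $\quot_{\dots,d,d+1,d',\dots}$ (as a closed locus in $B \times_A A'$ defined by $\CE^{(d')} \subset \CE^{(d+1)}$, versus as $\BP(\CE^{(d)}/\CE^{(d')})$ inside $\BP(\CE^{(d)})$). I expect this to be the main, but routine, point to verify. It is handled by comparing functors of points: on a test scheme $T$ mapping to $B \times_A A'$, both closed conditions represent the same universal statement, namely that the tautological surjection $\CE^{(d)}_T \twoheadrightarrow L$ factors through $(\CE^{(d)}/\CE^{(d')})_T$; hence the two scheme structures coincide.
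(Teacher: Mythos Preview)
Your proof is correct and follows essentially the same approach as the paper. The paper does not give a formal proof of this lemma; instead, immediately after the statement it explains (``In other words\dots'') that $\quot_{\dots,d,d+1,d',\dots}$ is the closed subscheme of $\BP_{\quot_{\dots,d,d',\dots} \times C}(\CE^{(d)})$ cut out as the zero locus of the composition $\CE^{(d')} \to \CE^{(d)} \to \CO(1)$, which is precisely the argument you unwind via base change from Lemma~\ref{lem:pi minus} and the functor-of-points comparison.
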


In other words, \eqref{eqn:lem pi minus general} states that $\quot_{\dots,d,d+1,d',\dots}$ is regularly embedded (in the sense of Subsection \ref{sub:virtual}) in the projective bundle 
$$
\BP_{\quot_{\dots,d,d',\dots} \times C} \left( \CE^{(d)} \right) 
$$
as the zero locus of the composition
$$
\CE^{(d')} \rightarrow \CE^{(d)} \rightarrow \CO(1).
$$
Finally, the following result provides the opposite scenario to Lemma \ref{lem:pi minus general}. For any non-negative integers $d,d'$ with $d < d'-1$ (and arbitrary integers $<d$ and $>d'$ standing in for the ellipsis) consider the map 
\begin{equation}
\label{eqn:pi plus}
p_+ \times p_C: \quot_{\dots,d,d'-1,d',\dots} \rightarrow \quot_{\dots,d,d',\dots} \times C
\end{equation}
which forgets $E^{(d'-1)}$ in a flag \eqref{eqn:def nested general}, but remembers the support point of $E^{(d'-1)}/E^{(d')}$.

\begin{lemma}
\label{lem:pi plus}

The map $p_+ \times p_C$ of \eqref{eqn:pi plus} induces an isomorphism 
\begin{equation}
\label{eqn:lem pi plus}
\quot_{\dots,d,d'-1,d',\dots} \cong \BP_{\quot_{\dots,d,d',\dots} \times C}\left(\CE^{(d')\vee} \otimes \CK_C - \CE^{(d)\vee} \otimes \CK_C \right)
\end{equation}
The line bundle on the LHS parameterizing $\Gamma(C,E^{(d'-1)}/E^{(d')})$ is identified with $\CO(-1)$.

\end{lemma}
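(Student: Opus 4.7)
Write $Y = \quot_{\dots, d, d'-1, d', \dots}$ and $X = \quot_{\dots, d, d', \dots} \times C$, and set $V = \CE^{(d')\vee}\otimes\CK_C$, $W = \CE^{(d)\vee}\otimes\CK_C$, with $\varphi : W \to V$ the $\CK_C$-twist of the dual of the inclusion $\CE^{(d')} \hookrightarrow \CE^{(d)}$. The plan is to construct a morphism $\Phi : Y \to \BP_X(V)$, verify it factors through the virtual projectivization $\BP_X(V-W)$, and finally show it is an isomorphism.

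\textbf{Construction of $\Phi$.} The universal flag on $Y \times C$ produces the short exact sequence
$$0 \to \CE^{(d')} \to \CE^{(d'-1)} \to i_*\CL \to 0,$$
where $i : Y \hookrightarrow Y \times C$ is the graph of $p_C$, a codimension-one regular embedding with normal bundle $p_C^*(\CK_C^\vee)$. Grothendieck duality for $i$ identifies the extension class in $\text{Ext}^1(i_*\CL, \CE^{(d')})$ with a morphism $\CL \to (p_-\times p_C)^*(\CE^{(d')} \otimes \CK_C^\vee)$ on $Y$. Since fiberwise the extension is non-split ($\CE^{(d'-1)}$ being locally free), this is an inclusion of a line subbundle into a rank-$r$ bundle. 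Dualizing produces a surjection $(p_-\times p_C)^* V \twoheadrightarrow \CL^{-1}$, and by the universal property of $\BP_X(V)$ this defines $\Phi$, with $\CL$ identified as the pullback of $\CO(-1)$.

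\textbf{Factoring through $\BP_X(V-W)$.} I need the tautological section $\sigma : \pi^* W \to \CO(1)$ to pull back to zero under $\Phi$. Dualizing, this amounts to the vanishing of the composition $\CL \hookrightarrow (p_-\times p_C)^*(\CE^{(d')} \otimes \CK_C^\vee) \to (p_-\times p_C)^*(\CE^{(d)} \otimes \CK_C^\vee)$, where the second arrow is induced by $\CE^{(d')} \hookrightarrow \CE^{(d)}$. By naturality of the Grothendieck-duality identification in the target sheaf, this composition represents the extension class of the pushout $0 \to \CE^{(d)} \to \CE^{(d)}\oplus_{\CE^{(d')}}\CE^{(d'-1)} \to i_*\CL \to 0$. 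The inclusion $\CE^{(d'-1)} \hookrightarrow \CE^{(d)}$ provided by the flag yields a splitting of this pushout sequence, so its class vanishes, and $\Phi$ lands in $\BP_X(V-W)$.

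\textbf{Isomorphism.} A fiberwise analysis shows $\Phi$ is bijective on closed points: over $((E^{(d')}\subset E^{(d)}), x) \in X$, the admissible $E^{(d'-1)}$ correspond to length-one subsheaves of $E^{(d)}/E^{(d')}$ at $x$, i.e. lines in $\text{Hom}(\BC_x, E^{(d)}/E^{(d')})$. Using the long exact sequence for $0 \to E^{(d')} \to E^{(d)} \to E^{(d)}/E^{(d')} \to 0$ and Serre duality $\text{Ext}^1(\BC_x, E) \cong E|_x \otimes \CK_C|_x^\vee$, this identifies with $\ker(E^{(d')}|_x \to E^{(d)}|_x) \otimes \CK_C|_x^\vee \cong \text{coker}(\varphi|_{(y,x)})^\vee$, matching the fiber of $\BP_X(V-W)$. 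Now $Y$ is smooth of dimension $rD$ (where $D$ is the largest index in the flag), so the bijection on closed points forces $\dim \BP_X(V-W) = rD$, whence $\sigma$ is a regular section and $\BP_X(V-W)$ is smooth. Finally, $\Phi$ is a proper morphism between smooth equidimensional varieties that is bijective on closed points, hence an isomorphism by Zariski's main theorem. The principal technical delicacy lies in the Grothendieck-duality identification and its naturality under pushout along $\CE^{(d')} \hookrightarrow \CE^{(d)}$, which is what makes the vanishing check visible.
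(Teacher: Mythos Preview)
The paper does not actually prove this lemma; it remarks that the special case $d=0$ is \cite[Lemma~1]{mn} and that the general case is an easy generalization left to the reader. Your proposal therefore supplies more than the paper does, and the strategy---interpreting the extension class of $\CE^{(d')}\subset\CE^{(d'-1)}$ via Grothendieck duality for the graph embedding, then using the inclusion $\CE^{(d'-1)}\hookrightarrow\CE^{(d)}$ to split the pushout and land in the virtual projectivization---is sound and is presumably close to what \cite{mn} does. (A notational slip: you write $p_-\times p_C$ throughout where the map in question is $p_+\times p_C$.)

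There is, however, a gap in the final step. Regularity of the section $\sigma$ (which does follow from your dimension count, since $\BP_X(V)$ is Cohen--Macaulay) does \emph{not} imply that $\BP_X(V-W)$ is smooth: a regular section cuts out a local complete intersection, but lci subschemes need not be smooth or even normal. Zariski's main theorem in the form you invoke requires normality of the target, so as written your argument only shows that $\Phi$ realizes $Y$ as the normalization of $\BP_X(V-W)$.

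The cleanest fix is to construct the inverse directly from the universal property of $Y$. On $\BP_X(V-W)$ the tautological $\CO(-1)\hookrightarrow\pi^*V$ corresponds, under the same duality identification you used, to a fiberwise-nontrivial extension of $i_*\CO(-1)$ by $\CE^{(d')}$. The vanishing of $\sigma$ forces the pushout along $\CE^{(d')}\hookrightarrow\CE^{(d)}$ to split; since $\text{Hom}(i_*\CO(-1),\CE^{(d)})=0$ the splitting is unique, and one checks fiberwise over $C$ that the resulting map from the extension into $\CE^{(d)}$ is an inclusion. This produces a flag over $\BP_X(V-W)$ classified by a morphism to $Y$, which is then inverse to $\Phi$. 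Alternatively, you could verify that $\Phi$ is unramified by comparing tangent spaces, conclude it is a closed immersion, and use that the target is Cohen--Macaulay of the same dimension to finish.
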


In other words, \eqref{eqn:lem pi plus} states that $\quot_{\dots,d,d'-1,d',\dots}$ is regularly embedded (in the sense of Subsection \ref{sub:virtual}) in the projective bundle 
$$
\BP_{\quot_{\dots,d,d',\dots} \times C}\left(\CE^{(d')\vee} \otimes \CK_C\right)
$$
as the zero locus of the composition
$$
\CE^{(d)\vee} \otimes \CK_C\rightarrow \CE^{(d')\vee} \otimes \CK_C \rightarrow \CO(1)
$$
(the first map is the dual of the inclusion of universal sheaves $\CE^{(d')} \subset \CE^{(d)}$, and the second map is the tautological map on a projectivization). A special case of Lemma \ref{lem:pi plus} corresponding to $d=0$ was proved in \cite[Lemma 1]{mn}, but the general case is an easy generalization, which we leave as an exercise to the reader.

\section{Functors and commutation relations}
\label{sec:main}

\vskip.2in

\subsection{Compositions of functors}
\label{eqn:compositions}

Let us now consider the functors \eqref{eqn:e intro}, namely
\begin{equation}
\label{eqn:e functor}
\te_i : \DD_{\quot_d} \rightarrow \DD_{\quot_{d+1} \times C}, \qquad \te_i(\gamma) = R(p_+ \times p_C)_*\Big(\CL^i \otimes Lp_-^*(\gamma) \Big)
\end{equation}
($\forall d \geq 0$) and study the composition of two such functors, say $\te_i$ and $\te_j$. In the square
\begin{equation}
\label{eqn:quadratic square}
\xymatrix{& \quot_{d-1,d,d+1} \ar[ld]_{p_-} \ar[rd]^{p_+} & \\ \quot_{d-1,d} \times \textcolor{red}{C} \ar[rd]_{p_+ \times \text{Id}_{\textcolor{blue}{C}}} &  & \quot_{d,d+1} \times \textcolor{blue}{C} \ar[ld]^{p_- \times \text{Id}_{\textcolor{red}{C}}} \\ & \quot_{d} \times \textcolor{red}{C} \times \textcolor{blue}{C} &}
\end{equation}
(we color the two copies of $C$ in red and blue to better keep track of them), we claim that the space on the top row is the fiber product of the spaces on the middle row over the space on the bottom row. This is an immediate consequence of the following two statements

\begin{itemize}[leftmargin=*]
    
\item Lemma \ref{lem:pi minus} (respectively \ref{lem:pi plus}) implies that the southwest pointing maps (respectively the southeast pointing maps) are the projectivizations of the same vector bundle (respectively difference of vector bundles)

\item the space on the top row has correct dimension, i.e. the sum of the dimensions of the spaces on the middle row minus the dimension of the space on the bottom row

\end{itemize}

Because of this, the composition of the functors (we color functors with the same color as the copy of $C$ in which they ``take values")
$$
\textcolor{red}{\te_i} \circ \textcolor{blue}{\te_j} : D_{\quot_{d-1}} \xrightarrow{\textcolor{blue}{\te_j}} D_{\quot_{d} \times \textcolor{blue}{C}} \xrightarrow{\textcolor{red}{\te_i} \boxtimes \text{Id}_{\textcolor{blue}{C}}} D_{\quot_{d+1} \times \textcolor{red}{C} \times \textcolor{blue}{C}}
$$
can alternatively be given by the formula
\begin{equation}
\label{eqn:formula quadratic}
\textcolor{red}{\te_i} \circ \textcolor{blue}{\te_j} (\gamma) = R(r_+ \times r_{\textcolor{red}{C} \times \textcolor{blue}{C}})_* \Big( \CL_1^i \otimes \CL_2^j \otimes L r_-^*(\alpha) \Big)
\end{equation}
where the morphisms $r_\pm, r_{\textcolor{red}{C} \times \textcolor{blue}{C}}$ are as in the diagram below
\begin{equation}
\label{eqn:diagram quadratic}
\xymatrix{& \quot_{d-1,d,d+1} \ar[ld]_{r_-} \ar[d]^{r_{\textcolor{red}{C} \times \textcolor{blue}{C}}} \ar[rd]^{r_+} & \\ \quot_{d-1} &\textcolor{red}{C} \times \textcolor{blue}{C} & \quot_{d+1}}
\end{equation}
Similarly, $\textcolor{red}{\te_j} \circ \textcolor{blue}{\te_i}$ is given by a formula analogous to \eqref{eqn:formula quadratic} but with the roles of $(\textcolor{red}{C},i)$ and $(\textcolor{blue}{C},j)$ switched. To compare the functors $\textcolor{red}{\te_i} \circ \textcolor{blue}{\te_j}$ and $\textcolor{blue}{\te_j} \circ \textcolor{red}{\te_i}$, we therefore need to work on a space which allows the switching of the two factors of $C$. This will be achieved in the next Subsection.

\subsection{The quadruple moduli space}
\label{sub:quadruples}

We will now consider the version for curves of the quadruple moduli space of sheaves that was studied for surfaces in \cite{n}. For fixed (but arbitrary) $d \geq 1$, the moduli space $\fY = \fY_{d-1,d,d+1}$ parameterizes quadruples
\begin{equation}
\label{eqn:quadruple}
 \xymatrix{& E' \ar@{^{(}->}[rd]^{y} & \\
E'' \ar@{^{(}->}[ru]^{x} \ar@{^{(}->}[rd]_{y}
& & E  \subset  V \\
& \widetilde{E}' \ar@{^{(}->}[ru]_{x} &} 
	\end{equation}
with the inclusion $E \subset V$ having colength $d-1$, and all other inclusions having colength 1 and supported at the point of $C$ indicated in the diagram. There are line bundles 
$$
\xymatrix{
\CL_1, \CL_2, \tCL_1, \tCL_2 \ar@{.>}[d] \\
\fY}
$$
whose fibers in the notation of \eqref{eqn:quadruple} are $\Gamma(C,E'/E'')$, $\Gamma(C,E/E')$, $\Gamma(C,\widetilde{E}'/E'')$ and $\Gamma(C,E/\widetilde{E}')$, respectively. Note that
\begin{equation}
\label{eqn:line bundle identity}
\CL_1 \otimes \CL_2 \cong \tCL_1 \otimes \tCL_2
\end{equation}
We have projection maps
\begin{equation}
\label{eqn:blow-up}
\pi^\uparrow, \pi^\downarrow : \fY \rightarrow \quot_{d-1,d,d+1}
\end{equation}
that forget $\widetilde{E}'$ and $E'$, respectively. It was shown in \cite[Section 6]{mn} that the maps above are the blow-ups of the smooth subvariety
\begin{equation}
\label{eqn:def z}
\Big\{x = y \text { and } E/E'' \cong \BC_x \oplus \BC_x\Big\}
\end{equation}
of the smooth variety $\quot_{d-1,d,d+1}$. As such, $\fY$ is itself smooth and projective of dimension $r(d+1)$, and moreover we have
\begin{equation}
\label{eqn:push o up and down}
R\pi^\uparrow_*(\CO_{\fY}) = \CO_{\quot_{d-1,d,d+1}} 
\end{equation}
with the RHS in homological degree 0 (and similarly for $\pi^\downarrow$). As a consequence, the compositions $\textcolor{red}{\te_i} \circ \textcolor{blue}{\te_j}$ and $\textcolor{blue}{\te_j} \circ \textcolor{red}{\te_i}$ can alternatively be given by the following formulas
\begin{equation}
\label{eqn:composition e quadruple 1}
\textcolor{red}{\te_i} \circ \textcolor{blue}{\te_j} (\gamma) = R(s_+ \times s_{\textcolor{red}{C} \times \textcolor{blue}{C}})_* \Big( \CL_1^i \otimes \CL_2^j \otimes L s_-^*(\gamma) \Big)
\end{equation}
\begin{equation}
\label{eqn:composition e quadruple 2}
\textcolor{blue}{\te_j} \circ \textcolor{red}{\te_i} (\gamma) = R(s_+ \times s_{\textcolor{red}{C} \times \textcolor{blue}{C}})_* \Big( \tCL_1^j \otimes \tCL_2^i \otimes L s_-^*(\gamma) \Big)
\end{equation}
where the maps are as in the following diagram
\begin{equation}
\label{eqn:diagram quadruple}
\xymatrix{& \fY \ar[ld]_{s_-} \ar[d]^{s_{\textcolor{red}{C} \times \textcolor{blue}{C}}} \ar[rd]^{s_+} & \\ \quot_{d-1} &\textcolor{red}{C} \times \textcolor{blue}{C} & \quot_{d+1}}
\end{equation}
We will now use formulas \eqref{eqn:composition e quadruple 1} and \eqref{eqn:composition e quadruple 2} to compare the functors $\textcolor{red}{\te_i} \circ \textcolor{blue}{\te_j}$ and $\textcolor{blue}{\te_j} \circ \textcolor{red}{\te_i}$.

\subsection{The diagonal and exceptional divisors}

There are two smooth divisors on $\fY$. One is the exceptional divisor of the blow-up \eqref{eqn:blow-up}, namely
\begin{equation}
\label{eqn:exc}
\text{Exc} = \Big\{\text{quadruples as in \eqref{eqn:quadruple} with } x=y \text{ and }E/E'' \cong \BC_x \oplus \BC_x \Big\} \subset \fY
\end{equation}
The other important divisor on $\fY$ is the so-called diagonal divisor
\begin{equation}
\label{eqn:diag}
\text{Diag} = \Big\{\text{quadruples as in \eqref{eqn:quadruple} with } E' = \widetilde{E}'\Big\} \subset \fY
\end{equation}
which is isomorphic to
\begin{equation}
\label{eqn:punctual}
\quot_{d-1,d,d+1}^\bullet = \Big\{E'' \stackrel{x}\subset E' \stackrel{x}\subset E \subset V \Big\}
\end{equation}
The scheme \eqref{eqn:punctual} is smooth for the same reason that $\quot_{d-1,d,d+1}$ is smooth, namely because it is isomorphic to the projectivization of the rank $r$ vector bundle parameterizing $E'_x$ on $\quot_{d-1,d}$. The union of the divisors \eqref{eqn:exc} and \eqref{eqn:diag} is 
\begin{equation}
\label{eqn:exc diag}
\text{Exc} \cup \text{Diag} = \{x = y\} = s^{-1}_{\textcolor{red}{C} \times \textcolor{blue}{C}}(\Delta)
\end{equation}

\begin{lemma}
\label{lem:maps of line bundles}

(\cite[Lemma 6]{mn}) On $\fY$, there exist maps of line bundles
\begin{equation}
\label{eqn:maps diag}
\CL_1 \rightarrow \tCL_2 \quad \text{and} \quad \tCL_1 \rightarrow \CL_2
\end{equation}
whose associated Weil divisor is $\emph{Diag}$, and maps of line bundles
\begin{equation}
\label{eqn:maps exc}
\tCL_2 \rightarrow \CL_1(s_{\textcolor{red}{C} \times \textcolor{blue}{C}}^*(\Delta)) \quad \text{and} \quad \CL_2 \rightarrow \tCL_1(s_{\textcolor{red}{C} \times \textcolor{blue}{C}}^*(\Delta))
\end{equation}
whose associated Weil divisor is $\emph{Exc}$.

\end{lemma}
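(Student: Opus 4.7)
The plan is to construct the first pair of maps $\CL_1 \to \tCL_2$ and $\tCL_1 \to \CL_2$ directly from natural morphisms of universal sheaves on $\fY \times C$, and then to derive the other two maps from them using the line bundle relation \eqref{eqn:line bundle identity} together with \eqref{eqn:exc diag}. For the first map, I would use that on $\fY \times C$ the universal inclusions $\CE'' \subset \CE' \subset \CE$ and $\CE'' \subset \tCE' \subset \CE$ in $\rho^*V$ fit into a composition
\begin{equation*}
\CE'/\CE'' \hookrightarrow \CE/\CE'' \twoheadrightarrow \CE/\tCE'
\end{equation*}
of sheaves on $\fY \times C$. Both $\CE'/\CE''$ and $\CE/\tCE'$ are relative length-one sheaves supported on (and finite over $\fY$ via) the graph of the projection $\fY \to C$ recording the point $x$. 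Applying $\pi_*$ gives the line bundle morphism
\begin{equation*}
\CL_1 = \pi_*(\CE'/\CE'') \longrightarrow \pi_*(\CE/\tCE') = \tCL_2,
\end{equation*}
and the map $\tCL_1 \to \CL_2$ is defined analogously using the composition $\tCE'/\CE'' \hookrightarrow \CE/\CE'' \twoheadrightarrow \CE/\CE'$ supported on the graph of $y$.

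Next, I would identify the vanishing locus of $\CL_1 \to \tCL_2$. At a geometric point of $\fY$, the fiber map is zero precisely when $E'/E'' \subseteq \tE'/E''$ inside $E/E''$; since $E'$ and $\tE'$ have the same colength in $E$, this forces $E' = \tE'$, i.e., the point lies on Diag. Conversely, on Diag the composition is patently zero. Because the map is not identically zero (it is an isomorphism on fibers wherever $x \neq y$), Diag is a smooth divisor in the smooth variety $\fY$, and the vanishing divisor must equal $n \cdot \mathrm{Diag}$ for some $n \geq 1$. Showing $n = 1$ is the main technical obstacle. I plan to handle it by a local computation at a generic point of Diag: exploit the blow-up description of $\fY$ from \cite[Section~6]{mn} to choose étale coordinates along a one-parameter slice transverse to Diag in which $\tE'$ moves to first order away from $E'$, and verify by direct calculation that the induced section of $\tCL_2 \otimes \CL_1^{-1}$ has a simple zero. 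The same analysis applies to $\tCL_1 \to \CL_2$.

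To construct the second pair of maps, I would use that the previous step gives $\tCL_2 \cong \CL_1(\mathrm{Diag})$ and $\CL_2 \cong \tCL_1(\mathrm{Diag})$, which is automatically consistent with \eqref{eqn:line bundle identity}. The set-theoretic equality $s_{\textcolor{red}{C} \times \textcolor{blue}{C}}^{-1}(\Delta) = \mathrm{Exc} \cup \mathrm{Diag}$ from \eqref{eqn:exc diag}, combined with the transversality of Exc and Diag (which follows from the blow-up description of $\fY$, since Diag and Exc are two distinct smooth divisors intersecting only along the proper transform of the center of the blow-up) and a multiplicity-one check along each component, upgrades to the divisor identity
\begin{equation*}
s_{\textcolor{red}{C} \times \textcolor{blue}{C}}^*(\Delta) = \mathrm{Diag} + \mathrm{Exc}.
\end{equation*}
Consequently $\CL_1\bigl(s_{\textcolor{red}{C} \times \textcolor{blue}{C}}^*(\Delta)\bigr) \cong \CL_1(\mathrm{Diag})(\mathrm{Exc}) \cong \tCL_2(\mathrm{Exc})$, and the required morphism $\tCL_2 \to \CL_1\bigl(s_{\textcolor{red}{C} \times \textcolor{blue}{C}}^*(\Delta)\bigr)$ is simply the canonical inclusion $\tCL_2 \hookrightarrow \tCL_2(\mathrm{Exc})$, which has associated Weil divisor Exc. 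The map $\CL_2 \to \tCL_1\bigl(s_{\textcolor{red}{C} \times \textcolor{blue}{C}}^*(\Delta)\bigr)$ is constructed in the same manner. The principal difficulty throughout is the multiplicity analysis in the second paragraph, together with the closely related divisor identity $s^*(\Delta) = \mathrm{Diag} + \mathrm{Exc}$; everything else is formal manipulation.
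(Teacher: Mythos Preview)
The paper does not give its own proof of this lemma; it simply cites \cite{mn} and moves on. So there is nothing in the present paper to compare your argument against.

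That said, your approach is the natural one and is essentially what one finds in \cite{mn}. The construction of $\CL_1 \to \tCL_2$ via the composition $\CE'/\CE'' \hookrightarrow \CE/\CE'' \twoheadrightarrow \CE/\tCE'$ is exactly right, as is your identification of the set-theoretic vanishing locus with $\mathrm{Diag}$. You have also correctly isolated the only nontrivial content: the multiplicity-one check along $\mathrm{Diag}$ and the divisor identity $s_{\textcolor{red}{C} \times \textcolor{blue}{C}}^*(\Delta) = \mathrm{Diag} + \mathrm{Exc}$. For the latter, note that since $\pi^\uparrow : \fY \to \quot_{d-1,d,d+1}$ is the blow-up of a smooth center contained in the smooth divisor $\{x=y\}$, and $\mathrm{Diag}$ is the proper transform of $\{x=y\}$, the identity follows from the standard formula for the total transform of a divisor under a blow-up along a center contained in it (the multiplicity of $\mathrm{Exc}$ equals the order of vanishing of the local equation of $\{x=y\}$ along the center, which is $1$ since the center is smooth inside the smooth divisor). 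This in turn can be used to deduce the multiplicity-one statement for your first map, since $\CL_1 \otimes \tCL_2^{-1} \cong \tCL_1^{-1} \otimes \CL_2$ by \eqref{eqn:line bundle identity}, so once you know $\mathrm{Diag} + \mathrm{Exc}$ is the full vanishing divisor of the composite $\CL_1 \to \tCL_2 \to \CL_1(s^*\Delta)$, the individual multiplicities are forced. Your plan is sound.
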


The diagonal divisor plays an important role in the following result.

\begin{proposition}
\label{prop:quadratic relation e}

For any $i \leq j \in \BZ$, there is a natural transformation
\begin{equation}
\label{eqn:quadratic relation e}
\textcolor{blue}{\te_j} \circ \textcolor{red}{\te_i}  \longrightarrow \textcolor{red}{\te_i} \circ \textcolor{blue}{\te_j} 
\end{equation}
of functors $D_{\quot_{d-1}} \rightarrow D_{\quot_{d+1} \times \textcolor{red}{C} \times \textcolor{blue}{C}}$, whose cone is filtered with associated graded
\begin{equation}
\label{eqn:cone ee}
\bigoplus_{k=i}^{j-1} \Delta_*\left(\te_k \circ \te_{i+j-k} \Big|_\Delta\right)
\end{equation}

\end{proposition}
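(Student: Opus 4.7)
The plan is to realize both compositions as pushforwards from the quadruple space $\fY$ via the formulas \eqref{eqn:composition e quadruple 1} and \eqref{eqn:composition e quadruple 2}, and then to compare the two integrands directly on $\fY$ using Lemma \ref{lem:maps of line bundles}.

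First I would construct a natural injection of line bundles on $\fY$:
$$
\tCL_1^j \otimes \tCL_2^i \hookrightarrow \CL_1^i \otimes \CL_2^j.
$$
Rewriting both sides using the identity $\tCL_1 \otimes \tCL_2 \cong \CL_1 \otimes \CL_2$ of \eqref{eqn:line bundle identity}, this amounts to the $(j-i)$-th tensor power of the map $\tCL_1 \to \CL_2$ of \eqref{eqn:maps diag}, whose vanishing divisor is $\emph{Diag}$. Hence the map is an injection of line bundles whose cokernel $Q$ is supported scheme-theoretically on the thickening $(j-i)\cdot\emph{Diag}$. Applying $R(s_+\times s_{\textcolor{red}{C}\times\textcolor{blue}{C}})_*\bigl(-\otimes Ls_-^*(\gamma)\bigr)$ to the short exact sequence $0 \to \tCL_1^j\otimes \tCL_2^i \to \CL_1^i\otimes \CL_2^j \to Q \to 0$ produces, by \eqref{eqn:composition e quadruple 1}–\eqref{eqn:composition e quadruple 2}, the desired natural transformation \eqref{eqn:quadratic relation e}, whose cone is $R(s_+\times s_{\textcolor{red}{C}\times\textcolor{blue}{C}})_*\bigl(Q \otimes Ls_-^*(\gamma)\bigr)$.

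Next I would filter $Q$ by the conormal filtration of the Cartier divisor $\emph{Diag}$, via the short exact sequences
$$
0 \to \CL_1^i\CL_2^j(-(k+1)\emph{Diag}) \to \CL_1^i\CL_2^j(-k\emph{Diag}) \to \CL_1^i\CL_2^j(-k\emph{Diag})\big|_{\emph{Diag}} \to 0
$$
for $k=0,\dots,j-i-1$. Using the identification $\tCL_1\cong \CL_2(-\emph{Diag})$ supplied by Lemma \ref{lem:maps of line bundles}, together with the fact that $\CL_1\cong\tCL_1$ and $\CL_2\cong\tCL_2$ after restriction to $\emph{Diag}$, the $k$-th graded piece simplifies to $\CL_1^{i+k}\otimes \CL_2^{j-k}\big|_{\emph{Diag}}$. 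Reindexing via $m = i+k$, I obtain precisely $\CL_1^m\otimes \CL_2^{i+j-m}\big|_{\emph{Diag}}$ for $m=i,\dots,j-1$.

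Finally I would identify $\emph{Diag}\subset \fY$ with the punctual nested Quot scheme $\quot_{d-1,d,d+1}^\bull$ of \eqref{eqn:punctual}, under which the map $s_{\textcolor{red}{C}\times\textcolor{blue}{C}}\big|_{\emph{Diag}}$ factors as $\Delta \circ p_C$, and the restrictions of $s_\pm$ and $\CL_1, \CL_2$ match the data defining the convolution $\te_m \circ \te_{i+j-m}$ via the analog of \eqref{eqn:formula quadratic} for the punctual locus. Hence the push-forward of each graded piece becomes $\Delta_*(\te_m\circ\te_{i+j-m}|_\Delta(\gamma))$; since derived pushforward sends the filtration of $Q$ to a filtration of its push-forward, this yields the associated graded \eqref{eqn:cone ee}.

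The main obstacle is the last step: checking carefully that, under the isomorphism $\emph{Diag}\cong\quot_{d-1,d,d+1}^\bull$, the restricted correspondence $(s_-,\,s_+\times (\Delta\circ p_C))$ together with the twisting class $\CL_1^m\otimes \CL_2^{i+j-m}$ really reproduces the kernel of $\te_m\circ \te_{i+j-m}|_\Delta$. Everything else reduces to base change, the projection formula, and the elementary filtration computation above.
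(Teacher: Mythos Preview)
Your proposal is correct and is essentially the same argument as the paper's own proof. Both realize the two compositions on $\fY$ via \eqref{eqn:composition e quadruple 1}--\eqref{eqn:composition e quadruple 2}, build the map of line bundles as the $(j-i)$-th power of $\tCL_1 \to \CL_2$ from \eqref{eqn:maps diag}, and filter the cokernel so that the graded pieces are $\CL_1^m \otimes \CL_2^{i+j-m}\big|_{\mathrm{Diag}}$; the only cosmetic difference is that the paper writes the filtration as a chain of successive maps $\tCL_1^{k+1}\tCL_2^i\CL_2^{j-k-1} \to \tCL_1^{k}\tCL_2^i\CL_2^{j-k}$ rather than as the conormal filtration by $\CO(-k\,\mathrm{Diag})$, but under $\tCL_1 \cong \CL_2(-\mathrm{Diag})$ these are literally the same filtration.
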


\begin{proof} Because of \eqref{eqn:composition e quadruple 1} and \eqref{eqn:composition e quadruple 2}, in order to construct the required natural transformations between the functors $\textcolor{blue}{\te_j} \circ \textcolor{red}{\te_i}$ and $\textcolor{red}{\te_i} \circ \textcolor{blue}{\te_j}$, it suffices to construct morphisms
$$
\tCL_1^j \otimes \tCL_2^i \quad \text{and} \quad \CL_1^i \otimes \CL_2^j
$$
on $\fY$. To this end, we take the second map in \eqref{eqn:maps diag} and raise it to the $(j-i)$-th power
$$
\tCL_1^j \otimes \tCL_2^i  \rightarrow \tCL_1^{j-1} \otimes \tCL_2^i \otimes \CL_2  \rightarrow \dots \rightarrow \tCL_1^{i+1} \otimes \tCL_2^{i} \otimes \CL_2^{j-i-1}  \rightarrow \underbrace{\tCL_1^i \otimes \tCL_2^{i}}_{\cong \CL_1^i \otimes \CL_2^i} \otimes \CL_2^{j-i} \cong \CL_1^i \otimes \CL_2^j
$$
(the isomorphisms use \eqref{eqn:line bundle identity}). The cone of the above composition is filtered with associated graded equal to the direct sum of the cones of the individual maps, namely
$$
\bigoplus_{k=i}^{j-1} \text{Cone}\left[\tCL_1^{k+1} \otimes \tCL_2^i \otimes \CL_2^{j-k-1} \rightarrow \tCL_1^{k} \otimes \tCL_2^i \otimes \CL_2^{j-k} \right] \cong \bigoplus_{k=i}^{j-1} \CL_1^k \otimes \CL_2^{i+j-k} \otimes \CO_{\text{Diag}}
$$
where we used the fact that $\CL_1 \cong \tCL_1$ and $\CL_2 \cong \tCL_2$ on the locus $\text{Diag}$. The cones above on $\fY$ give rise, via the assignment
$$
\Psi \in \DD_{\fY} \quad \leadsto \quad \text{functor } R(s_+ \times s_{\textcolor{red}{C} \times \textcolor{blue}{C}})_* \Big( \Psi \otimes L s_-^*(-) \Big)
$$
precisely to the functors $\DD_{\quot_{d-1}} \rightarrow \DD_{\quot_{d+1} \times \textcolor{red}{C} \times \textcolor{blue}{C}}$ specified in \eqref{eqn:cone ee}.

\end{proof}

The natural analogue of Proposition \ref{prop:quadratic relation e} holds with all $\te_a \circ \te_b$ replaced by $\tf_b \circ \tf_a$ in formulas \eqref{eqn:quadratic relation e} and \eqref{eqn:cone ee}, where
\begin{equation}
\label{eqn:f functor}
\tf_i : \DD_{\quot_{d+1}} \rightarrow \DD_{\quot_d \times C}, \qquad \tf_i(\gamma) = R(p_- \times p_C)_*\Big(\CL^i \otimes Lp_+^*(\gamma) \Big)
\end{equation}
are the functors defined in \eqref{eqn:f intro}. 

\subsection{An ``orthogonal" viewpoint} 
\label{sub:orthogonal}

Besides the maps \eqref{eqn:blow-up}, which forget the top/bottom sheaves in \eqref{eqn:quadruple}, we also have the maps
\begin{equation}
\label{eqn:diagram quadruple pm}
\xymatrix{& \fY \ar[ld]_{\pi_+} \ar[rd]^{\pi_-} & \\ \fZ_+ & & \fZ_-}
\end{equation}
which forget the right/left sheaves in \eqref{eqn:quadruple}. Specifically, the varieties $\fZ_{\pm}$ parameterize
\begin{align}
&\fZ_+ = \Big\{ E' \stackrel{x}\supset E'' \stackrel{y}\subset \widetilde{E}' \Big\} \label{eqn:z plus} \\
&\fZ_- = \Big\{ E' \stackrel{y}\subset E \stackrel{x}\supset \widetilde{E}' \Big\} \label{eqn:z minus}
\end{align}
with all sheaves above being finite colength subsheaves of $V$. Diagram \eqref{eqn:diagram quadruple pm} and the spaces \eqref{eqn:z plus}--\eqref{eqn:z minus} are very similar to the analogous notions treated in \cite{zhao2} in the case of surfaces, as will be the following discussion. We will use the notation $\CL_1,\tCL_1$ (respectively $\CL_2, \tCL_2$) for the line bundles on $\fZ_-$ (respectively $\fZ_+$) inherited from the same-named line bundles on $\fY$. As a consequence of the isomorphism
\begin{equation}
\label{eqn:product of proj}
\fZ_- \cong \BP_{\quot_{d-1} \times \textcolor{red}{C}}(\textcolor{red}{\CE}) \times_{\quot_{d-1}} \BP_{\quot_{d-1} \times \textcolor{blue}{C}}(\textcolor{blue}{\CE})
\end{equation}
(which is simply a double application of Lemma \ref{lem:pi minus}) we conclude the following.

\begin{proposition}
\label{prop:z minus}

The variety $\fZ_-$ is smooth of dimension $r(d+1)$. 
    
\end{proposition}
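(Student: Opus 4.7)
The approach hinges entirely on the isomorphism \eqref{eqn:product of proj}, so the plan is to justify that isomorphism and then read off smoothness and dimension from its right-hand side.

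First I would confirm the isomorphism \eqref{eqn:product of proj}. A point of $\fZ_-$ is a configuration $E' \stackrel{y}\subset E \stackrel{x}\supset \widetilde{E}'$ with $E \subset V$ of colength $d-1$. Forgetting $\widetilde{E}'$ while remembering the support point $y$ gives the data $(E' \subset E \subset V, y) \in \quot_{d-1,d} \times \textcolor{red}{C}$, which by Lemma \ref{lem:pi minus} is a point of $\BP_{\quot_{d-1} \times \textcolor{red}{C}}(\textcolor{red}{\CE})$. Symmetrically, forgetting $E'$ gives a point of $\BP_{\quot_{d-1} \times \textcolor{blue}{C}}(\textcolor{blue}{\CE})$. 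Since both colength-one subsheaves of $V$ are contained in the same sheaf $E \in \quot_{d-1}$, the two projective bundles are to be fibered together over $\quot_{d-1}$; this yields \eqref{eqn:product of proj}. At the level of schemes (not just of points), this amounts to observing that the universal families on the two sides are canonically identified, which is built into the functorial statement of Lemma \ref{lem:pi minus}.

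Next I would deduce the desired smoothness and dimension count. Each factor $\BP_{\quot_{d-1} \times C}(\CE)$ is smooth, being a $\BP^{r-1}$-bundle over the smooth variety $\quot_{d-1} \times C$, and its structural morphism to $\quot_{d-1}$ (the projective bundle projection composed with the projection away from $C$) is smooth of relative dimension $r$. Since the fiber product of two smooth morphisms over a common smooth base is smooth, $\fZ_-$ is smooth. The dimension is
\[
\dim \fZ_- \;=\; \dim \quot_{d-1} + 2r \;=\; r(d-1) + 2r \;=\; r(d+1),
\]
as claimed.

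The only real content is the scheme-theoretic verification of \eqref{eqn:product of proj}; I do not expect any serious obstacle, since it is a direct two-fold application of Lemma \ref{lem:pi minus}, with the two copies of $C$ in the product being precisely the independent support points $y$ (red) and $x$ (blue) of the two colength-one inclusions. Once this is in place, smoothness and dimension follow mechanically from general facts about projective bundles and fiber products of smooth morphisms.
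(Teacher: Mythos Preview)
Your proposal is correct and follows exactly the paper's approach: the paper deduces the proposition directly from the isomorphism \eqref{eqn:product of proj}, which it describes as ``simply a double application of Lemma \ref{lem:pi minus}'', and you have spelled out precisely this argument together with the dimension count. There is nothing to add.
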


The proper map (of smooth varieties of the same dimension, by Proposition \ref{prop:z minus})
$$
\pi_- : \fY \rightarrow \fZ_-
$$
is an isomorphism outside of the locus $(E',y) = (\tE',x)$. Under the isomorphism \eqref{eqn:product of proj}, this locus is the diagonal copy of $\BP_{\quot_{d-1} \times C}(\CE)$ in $\BP_{\quot_{d-1} \times \textcolor{red}{C}}(\textcolor{red}{\CE}) \times_{\quot_{d-1}} \BP_{\quot_{d-1} \times \textcolor{blue}{C}}(\textcolor{blue}{\CE})$, and is thus isomorphic to the codimension $r$ smooth variety $\quot_{d-1,d}$. Thus, $\pi_-$ is birational. Since both $\fY$ and $\fZ_-$ are smooth, we conclude that
\begin{equation}
\label{eqn:push z minus}
R\pi_{-*}(\CO_{\fY}) = \CO_{\fZ_-} 
\end{equation}

\subsection{The two components of $\fZ_+$: part 1}
\label{sub:two components}

Meanwhile, let us write $\fZ_+$ as the union
\begin{equation}
\label{eqn:two components}
\fZ_+ = \fZ_+' \cup \fZ_+''
\end{equation}
where we consider the following subschemes of $\fZ_+$
\begin{align*} 
&\fZ_+' = \overline{\Big\{(E',x) \neq (\widetilde{E}',y) \Big\}} \\
&\fZ_+'' = \Big\{(E',x) = (\widetilde{E}',y) \Big\}
\end{align*}
Note that $\fZ_+'' \cong \quot_{d,d+1}$, which is smooth of dimension $r(d+1)$. Meanwhile, $\fZ_+' = \text{Im }\pi_+$ and in what follows we will write
$$
\pi'_+ : \fY \rightarrow \fZ_+'
$$
for the restriction of the map $\pi_+$ to its image. Since the map $\pi'_+$ is one-to-one above the open dense locus $\{(E',x) \neq (\widetilde{E}',y)\}$, we conclude that $\fZ_+'$ also has dimension $r(d+1)$. Thus, \eqref{eqn:two components} is a decomposition of $\fZ_+$ into two components of the same dimension. The intersection of these two components is precisely the locus
$$
D = \Big\{E'' \stackrel{x}\subset E' \subset V, \text{ where }x \in \text{Supp}(V/E') \Big\} 
$$
which has dimension $r(d+1)-1$ and is thus a divisor in both $\fZ_+'$ and $\fZ_+''$. The following result will be used in the proofs of Propositions \ref{prop:prime} and \ref{prop:e and f} and will be proved in the Appendix, as it relies on a local argument.

\begin{proposition}
\label{prop:z plus}

The variety $\fZ_+'$ has rational singularities, i.e.
\begin{equation}
\label{eqn:push z plus}
R\pi'_{+*}(\CO_{\fY}) = \CO_{\fZ_+'}
\end{equation}
    
\end{proposition}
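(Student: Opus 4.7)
The plan is to verify the formula by a local computation in a formal neighborhood of the singular locus of $\fZ_+'$. The adjunction map $\CO_{\fZ_+'} \to R\pi'_{+*}\CO_{\fY}$ is automatically an isomorphism over the open dense locus where $\pi'_+$ is one-to-one, namely $\{(E',x) \neq (\widetilde{E}',y)\}$. Its cone is therefore supported on $D = \fZ_+' \cap \fZ_+''$, and it suffices to check the formula in a formal neighborhood of an arbitrary closed point of $D$.

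To set up the local computation, I would first identify $\fZ_+$ scheme-theoretically as the fiber product $\quot_{d,d+1} \times_{\quot_{d+1}} \quot_{d,d+1}$ via the map $p_+$ that forgets the larger subsheaf, with $\fZ_+ = \fZ_+' \cup \fZ_+''$ and $\fZ_+''$ the diagonal component $\quot_{d,d+1}$. Near a closed point of $D$, I would trivialize $V \cong \CO_C^r$ formally at the common support point and work in explicit analytic coordinates adapted to this trivialization. Then $\fY$ becomes an explicit iterated projective bundle via Lemmas \ref{lem:pi minus} and \ref{lem:pi plus}, while $\fZ_+'$ becomes an explicit incidence subvariety of an ambient smooth scheme.

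The key structural observation is that the restriction $\pi'_+|_{\text{Diag}} : \text{Diag} \to D$ is birational: it is generically one-to-one, and only develops positive-dimensional fibers $\BP^{k-1}$ over the deeper strata of $D$ where the skyscraper quotient $V/E'$ has multiplicity $k \geq 2$ at the relevant point $x$. Thus every fiber of $\pi'_+$ is either a single reduced point or a projective space, and in the local model the computation of $R\pi'_{+*}\CO_{\fY}$ reduces to an application of Lemma \ref{lem:push d virtual}, or to standard cohomology vanishing on $\BP^{k-1}$. Combined with a verification of the normality of $\fZ_+'$ (which can be checked by Serre's criterion on the explicit local model), this yields both $\pi'_{+*}\CO_{\fY} = \CO_{\fZ_+'}$ and the vanishing of the higher direct images.

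The main obstacle will lie in precisely identifying the scheme-theoretic local model of $\fZ_+'$ near $D$: since $\fZ_+'$ is defined as a closure inside a reducible fiber product, verifying the reducedness, normality and local equations requires careful bookkeeping of the excess intersection of $\fZ_+'$ and $\fZ_+''$ along $D$. An alternative route that sidesteps some of this is to verify normality of $\fZ_+'$ abstractly (using that $\fY$ is smooth and $\pi'_+$ has geometrically connected fibers), and then to deduce the vanishing of $R^i \pi'_{+*}\CO_{\fY}$ for $i \geq 1$ from Grauert--Riemenschneider together with the smallness of the exceptional fibers of $\pi'_+$ over a generic point of $D$.
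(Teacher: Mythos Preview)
Your local setup is reasonable and overlaps with the paper's: both work \'etale-locally near $D$, and your observation that every fiber of $\pi'_+$ is a projective space is correct (over a point of $D$ with $\ell=m$, $\lambda=\mu$, the fiber is $\BP(W/\ell)$ where $W=\{v:(A-\lambda)v\in\ell\}$). However, the proposed endgame has a genuine gap. Knowing that fibers are projective spaces does \emph{not} reduce the computation to Lemma~\ref{lem:push d virtual}: that lemma requires the defining section of $\pi^*(W^\vee)\otimes\CO(1)$ to be regular, which forces constant fiber dimension, whereas here the fiber dimension jumps along the deeper strata of $D$. Fiberwise vanishing of $H^{>0}(\BP^k,\CO)$ is likewise insufficient, since the infinitesimal thickenings of the fiber inside $\fY$ can contribute nontrivially to the formal-functions limit. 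Your alternative route does not close the gap either: Grauert--Riemenschneider controls $R^i\pi'_{+*}\omega_{\fY}$, not $R^i\pi'_{+*}\CO_{\fY}$, and passing between them requires $\fZ_+'$ to be Cohen--Macaulay, which you have not established and which is essentially the content of the claim; smallness in the perverse-sheaf sense says nothing about coherent pushforwards. Verifying normality via Serre's criterion on an explicit model would still leave the vanishing of higher direct images open.

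The paper proceeds differently after the local reduction. It normalizes $\ell$ and $\lambda$, strips away the cyclic vectors, and by a further local complete intersection base change reduces to the pair
\[
\bar{Z}'_+=\{(B,\mu,w):Bw=\mu w,\ \det(\mu I_d - B)=0\},\qquad
\bar{Y}=\{(B,\mu,w,s):(B-\mu)s=0,\ w\in s\}.
\]
The decisive step is Weyman's geometric technique: one realizes $\bar{Y}$ as the total space of a vector bundle over the $\BP^{d-1}$ of lines $s$, namely $\text{Spec}_{\BP^{d-1}}S^\bullet\big(\text{Hom}(\CO^{\oplus d},\CQ)\oplus\CO(1)\big)$, and computes $R\bar p_*\CO_{\bar Y}$ as the cohomology on $\BP^{d-1}$ of the symmetric powers of this bundle. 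These symmetric powers have no higher cohomology, and their global sections are exactly the coordinate ring of $\bar{Z}'_+$. This single computation simultaneously identifies the ring of functions on $\bar{Z}'_+$ (hence gives normality for free) and kills the higher direct images, with no separate appeal to Serre's criterion or to Grauert--Riemenschneider. That is the missing idea in your proposal.
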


\subsection{The two components of $\fZ_+$: part 2}

There are several vector bundles on $\fZ_+'' \cong \quot_{d,d+1}$, constructed as follows. Take the universal rank $r$ vector bundles $\CE''$ and $\CE'$ on $\fZ_+'' \times C$ parameterizing the sheaves $E''$ and $E'$, respectively. Then if we restrict to the graph of the map $p_C : \fZ_+'' \rightarrow C$ that remembers $x$, we obtain rank $r$ vector bundles on $\fZ_+''$ that we will denote by $\CE''_x$ and $\CE_x'$, respectively. Moreover, we will write $V$ instead of the more appropriately named vector bundle $p_C^*(V)$ on $\fZ_+''$.

\begin{proposition}
\label{prop:prime}

On $\fZ_+''$, we have the formula
\begin{equation}
\label{eqn:double prime}
\frac {\det \CE'_x}{\det V} \cong \CO_{\fZ_+''}(-D)
\end{equation}
while on $\fZ_+'$ we have the following formula
\begin{equation}
\label{eqn:prime}
R\pi'_{+*}\left(\CL_1 \otimes \tCL^{-1}_2 \right) = \CO_{\fZ_+'}(-D) \end{equation}
with the RHS in homological degree 0.

\end{proposition}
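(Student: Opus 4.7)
\textbf{For \eqref{eqn:double prime}.} The universal inclusion $\CE' \hookrightarrow \rho^*V$ on $\fZ_+'' \times C$ restricts along the graph of $p_C$ to a morphism of rank-$r$ vector bundles $\varphi : \CE'_x \to V$ on $\fZ_+''$. Its determinant is a global section of $\det V \otimes (\det \CE'_x)^{-1}$ whose zero locus is, set-theoretically, $\{x \in \text{Supp}(V/E')\} = D$. A local computation at a generic point of $D$, where $V/E'$ has length $1$ at $x$ and the inclusion $E' \subset V$ is \'etale-locally modelled by $\CO(-x) \oplus \CO^{r-1} \hookrightarrow \CO^r$, shows that $\det\varphi$ vanishes to exact order $1$ transversally to $D$. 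Hence $D$ is the scheme-theoretic zero locus of $\det\varphi$, giving the desired identification $\det\CE'_x \otimes (\det V)^{-1} \cong \CO_{\fZ_+''}(-D)$.

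\textbf{For \eqref{eqn:prime}.} By Lemma \ref{lem:maps of line bundles} the morphism $\CL_1 \to \tCL_2$ has vanishing divisor $\text{Diag}$, so $\CL_1 \otimes \tCL_2^{-1} \cong \CO_\fY(-\text{Diag})$. I would apply $R\pi'_{+*}$ to the short exact sequence
\begin{equation*}
0 \to \CO_\fY(-\text{Diag}) \to \CO_\fY \to \CO_\text{Diag} \to 0.
\end{equation*}
Proposition \ref{prop:z plus} gives $R\pi'_{+*}\CO_\fY = \CO_{\fZ_+'}$. Granting the identification $R\pi'_{+*}\CO_\text{Diag} = \CO_D$, the resulting distinguished triangle realizes $R\pi'_{+*}\CO_\fY(-\text{Diag})$ as the kernel of the surjection $\CO_{\fZ_+'} \twoheadrightarrow \CO_D$, i.e., as the ideal sheaf $\CO_{\fZ_+'}(-D)$. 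A pleasant byproduct is that $D$ is then automatically Cartier in $\fZ_+'$.

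\textbf{Verifying $R\pi'_{+*}\CO_\text{Diag} = \CO_D$.} The restriction $\pi'_+|_\text{Diag} : \text{Diag} \to D$ is surjective and generically one-to-one, with source $\text{Diag} \cong \quot_{d-1,d,d+1}^\bullet$ smooth. The fiber over a point of $D$ at which $V/E'$ has length $k$ at $x$ is the projective space $\BP((V/E')_x) \cong \BP^{k-1}$ parameterizing the extensions $E' \stackrel{x}\subset E \subset V$, and these fibers are connected rational varieties with vanishing higher cohomology of $\CO$. Cohomology-and-base-change then gives $R^i\pi'_{+*}\CO_\text{Diag} = 0$ for $i > 0$ and identifies $\pi'_{+*}\CO_\text{Diag}$ with $\CO_D$ provided $D$ is normal. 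Normality of $D$ follows from its description as a $\BP^{r-1}$-fibration over the universal support divisor $U \subset \quot_d \times C$ (the vanishing of the determinant of $\CE' \to \rho^*V$), combined with a local analysis of $U$ in the spirit of part (a) and the argument for Proposition \ref{prop:z plus}.

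\textbf{Main obstacle.} The principal technical point is the last one: confirming the normality of $D$ — equivalently, the suitable singularity control of the universal support divisor $U$ — so that the fiberwise computation globalizes to $\pi'_{+*}\CO_\text{Diag} = \CO_D$. Once this is in hand, the remaining steps are formal manipulations of the short exact sequence and invocations of Lemma \ref{lem:maps of line bundles} and Proposition \ref{prop:z plus}.
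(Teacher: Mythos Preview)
Your overall architecture matches the paper's: for \eqref{eqn:prime} both arguments push forward the short exact sequence $0 \to \CL_1 \otimes \tCL_2^{-1} \to \CO_\fY \to \CO_{\text{Diag}} \to 0$ and invoke Proposition \ref{prop:z plus}. The divergence is entirely in how you handle \eqref{eqn:double prime} and the identity $R\pi'_{+*}\CO_{\text{Diag}} = \CO_D$, and here the paper's route is both cleaner and sidesteps the obstacle you flag.

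The paper observes that $p_+ : \quot_{d-1,d,d+1}^\bullet \to \fZ_+'' \cong \quot_{d,d+1}$ is a virtual projective bundle in the sense of Subsection \ref{sub:virtual} (this is the analogue of Lemma \ref{lem:pi plus}), with $V$ and $W$ both of rank $r$. Lemma \ref{lem:push d virtual} then gives directly
\[
Rp_{+*}(\CO_{\quot_{d-1,d,d+1}^\bullet}) \;=\; \Big[\tfrac{\det \CE'_x}{\det V} \xrightarrow{\det\varphi} \CO_{\fZ_+''}\Big].
\]
Since $\det\varphi$ is a nonzero map of line bundles on the smooth irreducible variety $\fZ_+''$, it is injective; hence the complex is concentrated in degree $0$ and equals $\CO_D$, where $D$ is the scheme-theoretic vanishing of $\det\varphi$. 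This single computation yields \eqref{eqn:double prime} and $Rp_{+*}(\CO) = \CO_D$ simultaneously, and the commutative square identifying $\pi'_+|_{\text{Diag}}$ with $p_+$ then gives $R\pi'_{+*}\CO_{\text{Diag}} = \CO_D$ with no normality hypothesis on $D$ whatsoever.

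By contrast, your argument for \eqref{eqn:double prime} checks the vanishing order of $\det\varphi$ only at a generic point of $D$, which does not by itself exclude higher multiplicity along the deeper strata where $(V/E')_x$ has length $\geq 2$; and your argument for $R\pi'_{+*}\CO_{\text{Diag}} = \CO_D$ requires normality of $D$, which you leave open. In fact normality of $U$ (hence of $D$) can be proved, but the most direct way is to run the very same Lemma \ref{lem:push d virtual} computation for the map $\quot_{d-1,d} \to \quot_d \times C$ and deduce $R^0\pi_*\CO = \CO_U$ from a smooth birational source---so you would end up doing the paper's calculation anyway. The paper's organization simply front-loads that calculation and harvests both consequences at once.
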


\begin{proof} Under the isomorphism $\fZ_+'' \cong \quot_{d,d+1}$, the smooth divisor $D \subset \fZ_+''$ corresponds to the image of the projection map
$$
p_+ : \quot_{d-1,d,d+1}^\bullet \rightarrow \quot_{d,d+1}
$$
By analogy with Lemma \ref{lem:pi plus}, $p_+$ is the projectivization of the virtual vector bundle
$$
\BP_{\quot_{d,d+1}}\left({\CE'_x}^{\vee} \otimes \CK_C - V^\vee \otimes \CK_C \right)
$$
Therefore, the fibers of $p_+$ are either empty or projective spaces. This implies that
\begin{equation}
\label{eqn:temp 1}
R^0p_{+*}\left(\CO_{\quot_{d-1,d,d+1}^\bullet} \right) = \CO_D
\end{equation}
On the other hand, Lemma \ref{lem:push d virtual} implies that
\begin{equation}
\label{eqn:temp 2}
Rp_{+*}\left(\CO_{\quot_{d-1,d,d+1}^\bullet} \right) = \left[ \frac {\det \CE'_x}{\det V} \rightarrow \CO_{\fZ_+''} \right]
\end{equation}
with the arrow induced by the tautological map $\CE'_x \rightarrow V$. Since the arrow in the right-hand side of \eqref{eqn:temp 2} is a map of line bundles on a smooth scheme, it must be injective. Therefore, the right-hand side of \eqref{eqn:temp 2} is concentrated in degree 0, and combining this with \eqref{eqn:temp 1} yields
\begin{equation}
\label{eqn:temp 3}
\frac {\det \CE'_x}{\det V} \cong \CO_{\fZ_+''}(-D)
\end{equation}
as we needed to prove. Let us now turn to proving \eqref{eqn:prime}. We start from Lemma \ref{lem:maps of line bundles}, which ensures that we have a short exact sequence of coherent sheaves
\begin{equation}
\label{eqn:ses temp}
0 \rightarrow \CL_1 \otimes \tCL_2^{-1} \rightarrow \CO_{\fY} \rightarrow \CO_{\text{Diag}} \rightarrow 0
\end{equation}
on $\fY$. Recall that we have a commutative diagram
\begin{equation}
\label{eqn:cd temp}
\xymatrix{\text{Diag} \ar[r]^-{\cong} \ar[d]_{\pi_+'} & \quot_{d-1,d,d+1}^{\bullet} \ar[d]^{p^+} \\ D \ar[r]^-{\cong} & \quot_{d,d+1}}
\end{equation}
Pushing forward the short exact sequence \eqref{eqn:ses temp} via $\fY \xrightarrow{\pi_+'} \fZ_+'$ yields an exact triangle
$$
R\pi_{+*}' ( \CL_1 \otimes \tCL_2^{-1} ) \rightarrow R\pi_{+*}' ( \CO_{\fY} ) \rightarrow R\pi_{+*}' ( \CO_{\text{Diag}} ) \rightarrow R\pi_{+*}' ( \CL_1 \otimes \tCL_2^{-1} )[1]
$$
Formula \eqref{eqn:push z plus} tells us that the second term above is $\CO_{\fZ_+'}$, while formula \eqref{eqn:double prime} and the commutative diagram \eqref{eqn:cd temp} tell us that the third term above is $\CO_D$ (with the map between terms being the standard one). Therefore, we conclude \eqref{eqn:prime}.

\end{proof}

\subsection{The commutators of $\te_i$ and $\tf_j$}
\label{sub:e and f}

Using the geometric results developed in the previous Subsections, we will work out the commutators of the functors $\te_i$ and $\tf_j$. 

\begin{proposition}
\label{prop:e and f}

There is a natural transformation of functors
\begin{equation}
\label{eqn:comm e and f}
\textcolor{blue}{\tf_j} \circ \textcolor{red}{\te_{i}} \xleftrightharpoons[\text{if }i+j \geq 0]{\text{if }i+j < 0}  \textcolor{red}{\te_{i}} \circ \textcolor{blue}{\tf_j}
\end{equation}
whose cone is $\Delta_*(\text{tensoring with } \alpha_{i+j})$, where $\Delta : C \hookrightarrow \textcolor{red}{C} \times \textcolor{blue}{C}$ is the diagonal and
\begin{equation}
\label{eqn:alpha}
\Big\{ \alpha_\ell \in \DD_{\quot \times C} \Big\}_{\ell \in \BZ}
\end{equation}
are objects which admit filtrations with associated graded object
\begin{equation}
\label{eqn:bigger}
\bigoplus_{k=0}^{\ell} S^k\CE \otimes \left[S^{\ell-k} {\CE} \otimes \wedge^0 V \rightarrow S^{\ell-k-1} {\CE} \otimes \wedge^1 V \rightarrow \dots  \right] \otimes \CK_C^{-\ell+k} \otimes \frac {\det \CE}{\det V} 
\end{equation}
if $\ell \geq 0$, and 
\begin{equation}
\label{eqn:smaller}
\bigoplus_{k=r}^{-\ell} \frac {S^{k-r} {\CE}^\vee}{\det \CE} [-r+1] \otimes \left[ \dots \rightarrow S^{-\ell-k-1} {\CE}^\vee \otimes \wedge^1 V^\vee  \rightarrow S^{-\ell-k} {\CE}^\vee \otimes \wedge^0 V^\vee \right] \otimes \CK_C^{-\ell-k}
\end{equation}
if $\ell < 0$ (above, $\CE$ denotes the universal rank $r$ vector bundle on $\quot \times C$).

\end{proposition}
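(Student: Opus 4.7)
The plan is to parallel the proof of Proposition \ref{prop:quadratic relation e} by working on the smooth variety $\fY$ of Subsection \ref{sub:quadruples}. First I would write both compositions as pushforwards of line bundles on $\fY$. Using $R\pi_{-*}\CO_\fY = \CO_{\fZ_-}$ from \eqref{eqn:push z minus}, the composition $\te_i \circ \tf_j$ is the pushforward of $\tCL_2^i \otimes \CL_2^j$ from $\fY$ to $\quot_d \times \textcolor{red}{C} \times \textcolor{blue}{C}$, while by Proposition \ref{prop:z plus} the composition $\tf_j \circ \te_i$ is the pushforward of $\CL_1^i \otimes \tCL_1^j$ from $\fY$ together with a separate contribution from the component $\fZ_+'' \cong \quot_{d,d+1}$ of $\fZ_+$, controlled by Proposition \ref{prop:prime}.

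To build the natural transformation, I would combine Lemma \ref{lem:maps of line bundles} with the identity \eqref{eqn:line bundle identity} to obtain $\tCL_2 \otimes \CL_1^{-1} \cong \CL_2 \otimes \tCL_1^{-1} \cong \CO(\text{Diag})$ on $\fY$, from which
$$
\tCL_2^i \otimes \CL_2^j \;\cong\; \CL_1^i \otimes \tCL_1^j \otimes \CO\!\big((i+j)\text{Diag}\big).
$$
For $i + j \geq 0$, the $(i+j)$-th power of the canonical section of $\CO(\text{Diag})$ gives a morphism $\CL_1^i\tCL_1^j \to \tCL_2^i\CL_2^j$ on $\fY$; for $i + j < 0$, the ideal sheaf inclusion $\CO((i+j)\text{Diag}) \hookrightarrow \CO_\fY$ yields the reverse morphism $\tCL_2^i\CL_2^j \hookrightarrow \CL_1^i\tCL_1^j$. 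After pushforward these become \eqref{eqn:comm e and f}. Setting $\ell = i+j$ and working with $\ell \geq 0$, the cone admits a length-$\ell$ filtration on $\fY$ with associated graded pieces $\CL_1^i\tCL_1^j \otimes \CO(k'\text{Diag})|_{\text{Diag}}$ for $k' = 1,\ldots,\ell$, each supported on $\text{Diag} \cong \quot^\bullet_{d-1,d,d+1}$.

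The final step is to pushforward these pieces along the chain
$$
\text{Diag} \cong \quot^\bullet_{d-1,d,d+1} \longrightarrow \quot_{d-1,d} \longrightarrow \quot_d \times C \stackrel{\Delta}{\hookrightarrow} \quot_d \times \textcolor{red}{C} \times \textcolor{blue}{C},
$$
where the first arrow is a genuine $\BP^{r-1}$-bundle (the projectivization of $\CE'_x$ noted right after \eqref{eqn:punctual}) and the second is a (virtual) projective bundle in the sense of Subsection \ref{sub:virtual}. Applying Lemma \ref{lem:push d} to the first step produces the $S^k\CE$ factor, while Lemma \ref{lem:push d virtual} applied to the second produces the Koszul complex $[S^{\ell-k}\CE \otimes \wedge^0 V \to S^{\ell-k-1}\CE \otimes \wedge^1 V \to \ldots]$; the $\CK_C^{-\ell+k}$ twist follows from \eqref{eqn:canonical proj}. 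This yields the pieces $k = 0, \ldots, \ell - 1$ in \eqref{eqn:bigger}. The remaining $k = \ell$ piece arises from the $\fZ_+''$ contribution: pushing forward $\CL^\ell$ on $\fZ_+'' \cong \BP_{\quot_d \times C}(\CE)$ twisted by $\CO(-D) = \det\CE/\det V$ (Proposition \ref{prop:prime}) gives $S^\ell\CE \otimes \det\CE/\det V$ via Lemma \ref{lem:push d}. The case $i + j < 0$ is symmetric, but uses the second half of Lemma \ref{lem:maps of line bundles} (the $\text{Exc}$ divisor in place of $\text{Diag}$) and invokes the negative-power branch of Lemma \ref{lem:push d virtual}, which is what introduces the $[-r+1]$ shift and the dual Schur functors of \eqref{eqn:smaller}. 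The main obstacle will be the $\fZ_+''$ bookkeeping: verifying that the ``extra'' piece corresponding to $k = \ell$ (respectively $k = r$) arises from $\fZ_+''$ and is matched correctly with the natural transformation constructed on $\fY$ requires carefully combining Proposition \ref{prop:prime} with the Mayer--Vietoris sequence for $\fZ_+ = \fZ_+' \cup \fZ_+''$.
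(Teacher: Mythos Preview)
Your strategy for $i+j\ge 0$ is essentially the paper's: realize $\te_i\circ\tf_j$ via $\fZ_-$ and $\tf_j\circ\te_i$ via $\fZ_+$, lift both to $\fY$ using \eqref{eqn:push z minus} and \eqref{eqn:push z plus}, build the map on $\fY$ from the $(i+j)$-th power of the section of $\CO(\text{Diag})$, and pick up the extra $k=\ell$ summand from the short exact sequence \eqref{eqn:sesz} relating $\fZ_+$ to $\fZ_+'$. The pushforward along $\text{Diag}\cong\quot^\bullet_{d-1,d,d+1}\to\quot_{d-1,d}\to\quot_d\times C$ via Lemmas \ref{lem:push d} and \ref{lem:push d virtual} is also what the paper does. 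So far so good.

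The gap is in the $i+j<0$ case, and it is not just bookkeeping. First, the paper does \emph{not} switch to the $\text{Exc}$ divisor; it still uses the maps \eqref{eqn:maps diag} coming from $\text{Diag}$, so that remark should be dropped (it also contradicts your own earlier sentence about the ideal sheaf of $\text{Diag}$). Second, and more seriously, your ideal-sheaf map $\tCL_2^i\CL_2^j\hookrightarrow\CL_1^i\tCL_1^j$ on $\fY$ pushes forward to a map landing in $\sign_{\fZ_+'}\CL_1^i\tCL_1^j$, but there is no natural map from that object \emph{into} $\sign_{\fZ_+}\CL_1^i\tCL_1^j$: the sequence \eqref{eqn:sesz} points the wrong way. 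The paper's fix is to stop the chain of $\text{Diag}$-maps on $\fY$ one step early, at $\CL_1^{i+1}\otimes\tCL_1^{j}\otimes\tCL_2^{-1}$ rather than at $\CL_1^i\otimes\tCL_1^j$. Then \eqref{eqn:prime} identifies its pushforward with $\sign_{\fZ_+'}\CL_1^i\tCL_1^j\otimes\CO(-D)$, and the \emph{other} short exact sequence $0\to\CO_{\fZ_+'}(-D)\to\CO_{\fZ_+}\to\CO_{\fZ_+''}\to 0$ supplies the needed map into $\sign_{\fZ_+}$. The cone of this last step is $\sign_{\fZ_+''}\CL^{i+j}$, which by Lemma~\ref{lem:push d} (negative branch) gives the $k=-\ell$ summand of \eqref{eqn:smaller} with its $[-r+1]$ shift; the remaining summands come from the truncated $\text{Diag}$-chain. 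Without this $\CL_1\tCL_2^{-1}$ twist and \eqref{eqn:prime}, your construction for $i+j<0$ does not produce a morphism between the two compositions.
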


\begin{proof} The varieties $\fY, \fZ_-, \fZ_+, \fZ_+', \fZ_+''$ all admit maps to
$$
\quot_d \times \quot_d \times \textcolor{red}{C} \times \textcolor{blue}{C}
$$
by remembering only $(E',\tE',x,y)$. We will write
$$
\sign_* : \DD_{*} \rightarrow \DD_{\quot_d \times \quot_d \times \textcolor{red}{C} \times \textcolor{blue}{C}}
$$
for any $* \in \{\fY,\fZ_-,\fZ_+,\fZ_+',\fZ_+''\}$ for the derived push-forward of the aforementioned map. With this in mind, to obtain \eqref{eqn:comm e and f} we need to construct a morphism \footnote{Here we are implicitly using the formalism of correspondences (for $Z_1,Z_2$ smooth and projective)
$$
\Gamma \in \DD_{Z_1 \times Z_2} \quad \leadsto \quad \DD_{Z_1} \xleftarrow{\Phi_\Gamma} \DD_{Z_2}, \ \Phi_\Gamma = \text{proj}_{1*} \left(\Gamma \otimes \text{proj}_2^*\right)
$$
in which morphisms $\Gamma \rightarrow \Gamma'$ give rise to natural transformations $\Phi_{\Gamma} \Rightarrow \Phi_{\Gamma'}$. In this sense, the left and right-hand sides of \eqref{eqn:want big} are the objects in $\DD_{\quot_d \times \quot_d \times \textcolor{red}{C} \times \textcolor{blue}{C}}$ which give rise to the functors $\DD_{\quot_d} \rightarrow \DD_{\quot_d \times \textcolor{red}{C} \times \textcolor{blue}{C}}$ that feature in the left and right-hand sides of \eqref{eqn:comm e and f}, respectively, so the morphism in \eqref{eqn:want big} gives rise to the natural transformation in \eqref{eqn:comm e and f}.}
\begin{equation}
\label{eqn:want big}
\sign_{\fZ_+} \CL_1^i \otimes \tCL_1^j \xleftrightharpoons[\text{if }i+j \geq 0]{\text{if }i+j < 0} \sign_{\fZ_-} \tCL_2^i \otimes \CL_2^j 
\end{equation}
When $i+j \geq 0$, we will construct \eqref{eqn:want big} as the composition of the following morphisms
\begin{equation}
\label{eqn:want big 1}
\sign_{\fZ_+} \CL_1^i \otimes \tCL_1^j \rightarrow \sign_{\fZ_+'} \CL_1^i \otimes \tCL_1^j 
\end{equation}
\begin{equation}
\label{eqn:want big 2}
\sign_{\fZ_+'} \CL_1^i \otimes \tCL_1^j \stackrel{\eqref{eqn:push z plus}}\cong \sign_{\fY} \CL_1^i \otimes \tCL_1^j
\end{equation}
\begin{equation}
\label{eqn:want big 3}
\sign_{\fY} \CL_1^i \otimes \tCL_1^j \xrightarrow{\eqref{eqn:maps diag}} \sign_{\fY} \tCL_2^i \otimes \CL_2^j
\end{equation}
\begin{equation}
\label{eqn:want big 4}
\sign_{\fY}  \tCL_2^i \otimes \CL_2^j  \stackrel{\eqref{eqn:push z minus}}{\cong} \sign_{\fZ_-} \tCL_2^i \otimes \CL_2^j 
\end{equation}
The second and fourth morphisms above are isomorphisms, so their cones are trivial. Meanwhile, by Proposition \ref{prop:prime}, we have a short exact sequence
\begin{equation}
\label{eqn:sesz}
0 \rightarrow \CO_{\fZ_+''} \otimes \frac {\det \CE'_x}{\det V} \rightarrow \CO_{\fZ_+} \rightarrow \CO_{\fZ_+'} \rightarrow 0
\end{equation}
and so the cone of \eqref{eqn:want big 1} is
\begin{equation}
\label{eqn:big result 1}
\sign_{\fZ_+''} \CL^{i+j} \otimes \frac {\det \CE'_x}{\det V} \cong \Delta_{\quot_d \times C *} \left( S^{i+j}\CE' \otimes \frac {\det \CE'}{\det V} \right)
\end{equation}
The isomorphism in \eqref{eqn:big result 1} uses $\fZ_+'' \cong \quot_{d,d+1}$ and Lemmas \ref{lem:push d} and \ref{lem:pi minus}. Finally, because of the maps
$$
\CL_1^i \otimes \tCL_1^j \rightarrow \CL_1^i \otimes \tCL_1^{j-1} \otimes \CL_2 \rightarrow \dots \rightarrow \CL_1^i \otimes \tCL_1^{-i-1} \otimes \CL_2^{i+j-1} \rightarrow \CL_1^i \otimes \tCL_1^{-i} \otimes \CL_2^{i+j} \stackrel{\eqref{eqn:line bundle identity}}\cong \tCL_2^i \otimes \CL_2^j
$$
(all induced by \eqref{eqn:maps diag}) the cone of \eqref{eqn:want big 3} is filtered with associated graded object
\begin{equation}
\label{eqn:the object above}
\bigoplus_{k=0}^{i+j-1} \sign_{\text{Diag}} \CL_1^k \otimes \CL_2^{i+j-k} 
\end{equation}
where $\sign_{\text{Diag}}$ denotes the derived push-forward associated to the map 
$$
\text{Diag} \rightarrow \quot_d \times \quot_d \times C \times C
$$
that remembers $(E',E',x,x)$. Because $\text{Diag} \cong \quot_{d-1,d,d+1}^{\bullet}$, then Lemmas \ref{lem:push d}, \ref{lem:push d virtual}, \ref{lem:pi minus}, \ref{lem:pi plus} imply that the object \eqref{eqn:the object above} is equal to $\Delta_{\quot_d \times C *}$ applied to
\begin{equation}
\label{eqn:big result 2}
\bigoplus_{k=0}^{i+j-1} S^k\CE' \otimes \left[S^{i+j-k} {\CE'} \otimes \wedge^0 V \rightarrow S^{i+j-k-1} {\CE'} \otimes \wedge^1 V \rightarrow \dots  \right] \otimes \CK_C^{-i-j+k} \otimes \frac {\det \CE'}{\det V}
\end{equation}
Putting together \eqref{eqn:big result 1} and \eqref{eqn:big result 2} defines the object $\alpha_{i+j}$, filtered with associated graded object \eqref{eqn:bigger}. When $i+j < 0$, we will construct \eqref{eqn:want big} as the composition of the following morphisms
\begin{align}
&\sign_{\fZ_-} \tCL_2^i \otimes \CL_2^j
\stackrel{\eqref{eqn:push z minus}}\cong \sign_{\fY} \tCL_2^i \otimes \CL_2^j  \label{eqn:small 1} \\
&\sign_{\fY} \tCL_2^i \otimes \CL_2^j  \stackrel{\eqref{eqn:maps diag}}\longrightarrow \sign_{\fY} \CL_1^{i+1} \otimes \tCL_1^j \otimes \tCL_2^{-1}  \label{eqn:small 2} \\
&\sign_{\fY} \CL_1^{i+1} \otimes \tCL_1^j \otimes \tCL_2^{-1} \stackrel{\eqref{eqn:prime}}{\cong} \sign_{\fZ_+'} \CL_1^{i} \otimes \tCL_1^j \otimes \CO(-D) \label{eqn:small 3} \\
&\sign_{\fZ_+'} \CL_1^{i} \otimes \tCL_1^j \otimes \CO(-D) \rightarrow \sign_{\fZ_+} \CL_1^{i} \otimes \tCL_1^j  \label{eqn:small 4}
\end{align}
Because of 
$$
\tCL_2^i \otimes \CL_2^j \rightarrow \tCL_2^{i+1} \otimes \CL_1^{-1} \otimes \CL_2^j \rightarrow \dots \rightarrow \tCL_2^{-j-1} \otimes \CL_1^{i+j+1} \otimes \CL_2^j \stackrel{\eqref{eqn:line bundle identity}}{\cong} \CL_1^{i+1} \otimes \tCL_1^j \otimes \tCL_2^{-1}
$$
the cone of \eqref{eqn:small 2} can be filtered with associated graded object
$$ 
\bigoplus_{k=1}^{-i-j-1} \sign_{\text{Diag}}  \CL_1^{-k} \otimes \CL_2^{i+j+k} 
$$
where $\sign_{\text{Diag}}$ denotes the derived push-forward associated to the map $\text{Diag} \rightarrow \quot_d \times C$ that remembers $(E',x)$. Because $\text{Diag} \cong \quot_{d-1,d,d+1}^\bullet$, then Lemmas \ref{lem:push d}, \ref{lem:push d virtual}, \ref{lem:pi minus}, \ref{lem:pi plus} imply that the object above is equal to $\Delta_{\quot_d \times C *}$ applied to
\begin{equation}
\label{eqn:small result 1}
\bigoplus_{k=r}^{-i-j-1} \frac {S^{k-r} {\CE'}^\vee}{\det \CE'} [-r+1] \otimes \left[ \dots \rightarrow S^{-i-j-k-1} {\CE'}^\vee \otimes \wedge^1 V^\vee  \rightarrow S^{-i-j-k} {\CE'}^\vee \otimes \wedge^0 V^\vee \right] \otimes \CK_C^{-i-j-k}
\end{equation}
Finally, by Proposition \ref{prop:prime}, we have a short exact sequence
$$
0 \rightarrow \CO_{\fZ_+'(-D)} \rightarrow \CO_{\fZ_+} \rightarrow \CO_{\fZ_+''} \rightarrow 0
$$
Because of this, the cone of \eqref{eqn:small 4} is given by 
\begin{equation}
\label{eqn:small result 2}
\sign_{\fZ_+''} \CL^{i+j} \cong  \frac {S^{-i-j-r} {\CE'}^\vee}{\det \CE'} [-r+1]
\end{equation}
with the isomorphism due to the fact that $\fZ_+'' \cong \quot_{d,d+1}$ and Lemmas \ref{lem:push d} and \ref{lem:pi minus}. Putting together \eqref{eqn:small result 1} and \eqref{eqn:small result 2} gives us the object $\alpha_{i+j}$, with associated graded object \eqref{eqn:smaller}.

\end{proof}

\subsection{The commutators of $\te_i$, $\tf_i$ with $\tm_j$}
\label{sub:e,f and m}

Recall the functors \eqref{eqn:m intro}, namely
\begin{equation}
\label{eqn:m functor}
\DD_{\quot_d} \xrightarrow{\tm_i} \DD_{\quot_d \times C}, \quad \tm_i(\gamma) = \wedge^i\CE \otimes \pi^*(\gamma)
\end{equation}
where $\pi : \quot_d \times C \rightarrow \quot_d$ is the standard projection. It is clear that the cone of \eqref{eqn:comm e and f} can be expressed as complexes built out of various compositions of the functors $\tm_i$ (if we allow division by the functor $\tm_r$, which is invertible because $\wedge^r \CE$ is a line bundle). In the present Subsection, we will calculate the commutation relation of the functors $\tm_j$ with $\te_i$ and $\tf_i$. As before, let $\Delta: C \hookrightarrow \textcolor{red}{C} \times \textcolor{blue}{C}$ denote the diagonal.

\begin{proposition}
\label{prop:comm e and m}
For all $d\geq 0$, $i\in \BZ$, $j \in \{0,\dots,r\}$, there is a natural transformation
\begin{equation}
\label{eqn:e and m natural transformation}
\textcolor{blue}{\tm_j} \circ \textcolor{red}{\te_i} \longrightarrow \textcolor{red}{\te_i} \circ \textcolor{blue}{\tm_j}
\end{equation}
of functors $\DD_{\quot_d} \rightarrow \DD_{\quot_{d+1} \times \textcolor{red}{C} \times \textcolor{blue}{C}}$, whose cone is
\begin{equation}
\label{eqn:e and m cone}
\Delta_* \left( \te_{i+1} \circ \tm_{j-1} \Big|_\Delta \rightarrow \dots \rightarrow \te_{i+j} \circ \tm_{0} \Big|_\Delta \right) 
\end{equation}
Moreover, we have an isomorphism of functors $\DD_{\quot_d} \rightarrow \DD_{\quot_{d+1} \times \textcolor{red}{C} \times \textcolor{blue}{C}}$
\begin{equation}
\label{eqn:iso}
\textcolor{blue}{\tm_r} \circ \textcolor{red}{\te_i} \cong  \textcolor{red}{\te_i} \circ \textcolor{blue}{\tm_r} (-\Delta)
\end{equation}

\end{proposition}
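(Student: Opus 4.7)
The plan is to pull both compositions back to the common space $\quot_{d,d+1}\times\textcolor{blue}{C}$, where they are represented by integral transforms that differ only in which universal bundle enters the exterior power, and then compute the cone of the resulting morphism of coherent sheaves.

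By projection formula and flat base change along the Cartesian square
$$
\xymatrix{\quot_{d,d+1}\times\textcolor{blue}{C}\ar[r]^-{\mathrm{pr}_1}\ar[d]_-{(p_+\times p_C)\times\mathrm{id}} & \quot_{d,d+1}\ar[d]^-{p_+\times p_C}\\ \quot_{d+1}\times\textcolor{red}{C}\times\textcolor{blue}{C}\ar[r]^-{\pi_{12}} & \quot_{d+1}\times\textcolor{red}{C}}
$$
one obtains, for $\gamma\in\DD_{\quot_d}$,
$$
\textcolor{blue}{\tm_j}\circ\textcolor{red}{\te_i}(\gamma)=R((p_+\times p_C)\times\mathrm{id})_*\bigl(\CL^i\otimes\wedge^j\CE^{(d+1)}\otimes Lp_-^*\gamma\bigr),
$$
with the analogous formula for $\textcolor{red}{\te_i}\circ\textcolor{blue}{\tm_j}(\gamma)$ obtained by replacing $\wedge^j\CE^{(d+1)}$ by $\wedge^j\CE^{(d)}$; here $\CE^{(d+1)}\subset\CE^{(d)}$ are the universal rank-$r$ bundles on $\quot_{d,d+1}\times\textcolor{blue}{C}$. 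The natural transformation \eqref{eqn:e and m natural transformation} is then induced by the inclusion of sheaves $\wedge^j\CE^{(d+1)}\hookrightarrow\wedge^j\CE^{(d)}$.

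To compute its cone, I start from the universal short exact sequence $0\to\CE^{(d+1)}\to\CE^{(d)}\to\iota_*\CL\to 0$ on $\quot_{d,d+1}\times\textcolor{blue}{C}$, where $\iota:\quot_{d,d+1}\hookrightarrow\quot_{d,d+1}\times\textcolor{blue}{C}$ is the graph of $p_C$ with image $\Gamma$. A local frame calculation (writing $\CE^{(d+1)}$ as generated by $e_1,\ldots,e_{r-1},ze_r$ and $\CE^{(d)}$ by $e_1,\ldots,e_r$, with $z$ a local equation for $\Gamma$) shows that the map of exterior powers is injective with cokernel $\iota_*(\wedge^{j-1}\CE'_x\otimes\CL)$, where $\CE'_x:=\ker(\iota^*\CE^{(d)}\twoheadrightarrow\CL)$ is a rank $r-1$ vector bundle on $\Gamma\cong\quot_{d,d+1}$. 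Iterating the short exact sequence $0\to\wedge^k\CE'_x\to\wedge^k\iota^*\CE^{(d)}\to\wedge^{k-1}\CE'_x\otimes\CL\to 0$ (automatic since $\CL$ is a line bundle) then resolves $\wedge^{j-1}\CE'_x\otimes\CL$ by a complex in cohomological degrees $0,\ldots,j-1$ whose $a$-th term is $\wedge^{j-1-a}\iota^*\CE^{(d)}\otimes\CL^{a+1}$. The composition
$$
\quot_{d,d+1}\xrightarrow{\iota}\quot_{d,d+1}\times\textcolor{blue}{C}\xrightarrow{(p_+\times p_C)\times\mathrm{id}}\quot_{d+1}\times\textcolor{red}{C}\times\textcolor{blue}{C}
$$
factors as $p_+\times p_C:\quot_{d,d+1}\to\quot_{d+1}\times C$ followed by the diagonal $\mathrm{id}\times\Delta$, so pushing the resolution forward through it, tensoring with $\CL^i$ and $Lp_-^*\gamma$, and using projection formula, identifies the $a$-th term of the cone with $\Delta_*\bigl(\te_{i+a+1}\circ\tm_{j-1-a}\Big|_\Delta(\gamma)\bigr)$; relabeling $k=a+1$ yields precisely the complex \eqref{eqn:e and m cone}.

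Finally, for \eqref{eqn:iso} in the case $j=r$: taking determinants in the universal SES gives $\det\CE^{(d+1)}\cong\det\CE^{(d)}\otimes\CO(-\Gamma)$, since the induced map of line bundles $\det\CE^{(d+1)}\to\det\CE^{(d)}$ has a simple zero along the divisor $\Gamma$. Combined with the identity $((p_+\times p_C)\times\mathrm{id})^*\pi^*\CO(-\Delta)=\CO(-\Gamma)$ for $\pi:\quot_{d+1}\times\textcolor{red}{C}\times\textcolor{blue}{C}\to\textcolor{red}{C}\times\textcolor{blue}{C}$, the projection formula yields $\textcolor{blue}{\tm_r}\circ\textcolor{red}{\te_i}\cong\textcolor{red}{\te_i}\circ\textcolor{blue}{\tm_r}(-\Delta)$. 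The hardest step is the global identification of the cokernel of $\wedge^j\CE^{(d+1)}\hookrightarrow\wedge^j\CE^{(d)}$ with $\iota_*(\wedge^{j-1}\CE'_x\otimes\CL)$; once this is in hand, the rest is careful bookkeeping with an iterated Koszul-type resolution associated to a rank-one quotient, together with index tracking in the final identification.
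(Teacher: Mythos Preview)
Your proof is correct and follows essentially the same approach as the paper. Both arguments identify the two compositions on $\quot_{d,d+1}\times\textcolor{blue}{C}$ with the kernels $\CL^i\otimes\wedge^j\CE^{(d+1)}$ and $\CL^i\otimes\wedge^j\CE^{(d)}$, then compute the cone via the long exact sequence
\[
0\to\wedge^j\CE^{(d+1)}\to\wedge^j\CE^{(d)}\to\wedge^{j-1}\CE^{(d)}\otimes\Gamma_*(\CL)\to\cdots\to\wedge^0\CE^{(d)}\otimes\Gamma_*(\CL^j)\to 0,
\]
and the determinant identity $\det\CE^{(d+1)}\cong\det\CE^{(d)}\otimes\CO(-\Gamma)$ for \eqref{eqn:iso}. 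The only difference is that the paper states this long exact sequence in one stroke, whereas you derive it in two steps (first identifying the cokernel as $\iota_*(\wedge^{j-1}\CE'_x\otimes\CL)$, then resolving $\CE'_x$ via the rank-one quotient $\iota^*\CE^{(d)}\twoheadrightarrow\CL$); after applying the projection formula $\wedge^{j-k}\CE^{(d)}\otimes\Gamma_*(\CL^k)\cong\iota_*(\wedge^{j-k}\iota^*\CE^{(d)}\otimes\CL^k)$ the two descriptions coincide term by term.
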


\begin{proof} The universal sheaves $\textcolor{blue}{\CE'}$ and $\textcolor{blue}{\CE}$ on $\quot_{d,d+1} \times \textcolor{blue}{C}$ fit into a short exact sequence
\begin{equation}
\label{eqn:colored ses}
0\rightarrow \textcolor{blue}{\CE'} \rightarrow \textcolor{blue}{\CE} \rightarrow \textcolor{blue}{\Gamma}_*(\CL) \rightarrow 0
\end{equation}
where $\textcolor{blue}{\Gamma}$ is the graph of the map $p_C : \quot_{d,d+1} \rightarrow \textcolor{blue}{C}$. By definition, the compositions
$$
\textcolor{blue}{\tm_j} \circ \textcolor{red}{\te_i}  \quad \text{and} \quad \textcolor{red}{\te_i} \circ \textcolor{blue}{\tm_j}
$$
are given by the following correspondences on $\quot_{d,d+1} \times \textcolor{blue}{C}$
$$
\CL^i \otimes \wedge^j \textcolor{blue}{\CE'}  \quad \text{and} \quad \CL^i \otimes \wedge^j \textcolor{blue}{\CE} 
$$
The short exact sequence \eqref{eqn:colored ses} induces a long exact sequence
\begin{equation}
\label{eqn:colored wedge ses}
0\rightarrow \wedge^j \textcolor{blue}{\CE'} \rightarrow \wedge^j \textcolor{blue}{\CE} \rightarrow \wedge^{j-1} \textcolor{blue}{\CE} \otimes \textcolor{blue}{\Gamma}_*(\CL) \rightarrow \dots \rightarrow \wedge^{0} \textcolor{blue}{\CE} \otimes \textcolor{blue}{\Gamma}_*(\CL^j) \rightarrow 0
\end{equation}
for all $j \in \{0,\dots,r\}$. Tensoring \eqref{eqn:colored wedge ses} with $\CL^i$ induces a long exact sequence
$$
0  \rightarrow \textcolor{blue}{\tm_j} \circ \textcolor{red}{\te_i} \rightarrow \textcolor{red}{\te_i} \circ \textcolor{blue}{\tm_j} \rightarrow \Delta_* \left( \te_{i+1} \circ \tm_{j-1} \Big|_\Delta \right) \rightarrow \dots \rightarrow \Delta_* \left( \te_{i+j} \circ \tm_{0} \Big|_\Delta \right) \rightarrow 0
$$
which is precisely the content of equations \eqref{eqn:e and m natural transformation} and \eqref{eqn:e and m cone}. Finally, taking the determinant of the short exact sequence \eqref{eqn:colored ses} gives us
\begin{equation}
\label{eqn:blue}
\det \textcolor{blue}{\CE'} \cong \det \textcolor{blue}{\CE} \otimes \CO(-\textcolor{blue}{\Gamma})
\end{equation}
Translating the isomorphism \eqref{eqn:blue} of objects in $\DD_{\quot_{d,d+1} \times \textcolor{blue}{C}}$ into an isomorphism of functors $\DD_{\quot_d} \rightarrow \DD_{\quot_{d+1} \times \textcolor{red}{C} \times \textcolor{blue}{C}}$ yields precisely \eqref{eqn:iso}.

\end{proof}

The obvious analogues of formulas \eqref{eqn:e and m natural transformation}, \eqref{eqn:e and m cone}, \eqref{eqn:iso} hold with all $\te_a \circ \tm_b$ and $\tm_b \circ \te_a$ replaced by $\tm_b \circ \tf_a$ and $\tf_a \circ \tm_b$, respectively.

\subsection{The $K$-theoretic version}
\label{sub:k-th limit}

In the present Subsection, we will consider the $K$-theoretic versions of the functors \eqref{eqn:e functor}, \eqref{eqn:f functor}, \eqref{eqn:m functor}
\begin{align}
&\ke_i : \KK_{\quot_d} \rightarrow \KK_{\quot_{d+1} \times C}, \qquad \ke_i(\gamma) = (p_+ \times p_C)_*\Big([\CL^i] \otimes p_-^*(\gamma) \Big) \label{eqn:e operator} \\
&\kf_i : \KK_{\quot_{d+1}} \rightarrow \KK_{\quot_{d} \times C}, \qquad \kf_i(\gamma) = (p_- \times p_C)_*\Big([\CL^i] \otimes p_+^*(\gamma) \Big) \label{eqn:f operator} \\
&\km_i : \KK_{\quot_d} \rightarrow \KK_{\quot_{d} \times C}, \ \qquad \km_i(\gamma) = [\wedge^i\CE] \otimes \pi^*(\gamma) \label{eqn:m operator}
\end{align}
where $\pi : \quot_d \times C \rightarrow \quot_d$ denotes the standard projection. With this in mind, the $K$-theoretic version of Proposition \ref{prop:quadratic relation e} reveals the identities
\begin{equation}
\label{eqn:rel 1}
[\textcolor{red}{\ke_i}, \textcolor{blue}{\ke_j}] = \Delta_* \left( \sum_{k=i}^{j-1} \ke_k \circ \ke_{i+j-k} \Big|_\Delta \right)
\end{equation}
\begin{equation}
\label{eqn:rel 2}
[ \textcolor{blue}{\kf_j}, \textcolor{red}{\kf_i}] = \Delta_* \left( \sum_{k=i}^{j-1} \kf_{i+j-k} \circ \kf_k  \Big|_\Delta \right)
\end{equation}
for all integers $i \leq j$. Meanwhile, the $K$-theoretic version of Proposition \ref{prop:comm e and m} reads
\begin{equation}
\label{eqn:rel 3}
[\textcolor{red}{\ke_i}, \textcolor{blue}{\km_j}] = \Delta_* \left( \sum_{k=1}^{j} (-1)^{k+1} \ke_{i+k} \circ \km_{j-k} \Big|_\Delta \right)
\end{equation}
\begin{equation}
\label{eqn:rel 4}
[\textcolor{blue}{\km_j},\textcolor{red}{\kf_i}] = \Delta_* \left( \sum_{k=1}^{j} (-1)^{k+1} \km_{j-k} \circ \kf_{i+k} \Big|_\Delta \right)
\end{equation}
for all $i \in \BZ$ and $j \in \{0,\dots,r\}$, as well as 
\begin{equation}
\label{eqn:rel 5}
\textcolor{blue}{\km_r} \circ \textcolor{red}{\ke_i} = \textcolor{red}{\ke_i} \circ \textcolor{blue}{\km_r} (-\Delta)
\end{equation}
\begin{equation}
\label{eqn:rel 6}
\textcolor{red}{\kf_i} \circ \textcolor{blue}{\km_r} = \textcolor{blue}{\km_r} \circ \textcolor{red}{\kf_i} (-\Delta)
\end{equation}
for all $i \in \BZ$. Finally, the $K$-theoretic version of Proposition \ref{prop:e and f} reads
\begin{equation}
\label{eqn:rel 7 bis}
[\textcolor{red}{\ke_i}, \textcolor{blue}{\kf_j}] = \Delta_* ( \text{multiplication by }[\alpha_{i+j}])
\end{equation}
where 
\begin{equation}
\label{eqn:rel 7 bis bis}
[\alpha_{\ell}] = \begin{cases} \sum_{k=0}^{\ell} [S^k\CE] \left(\sum_{k'=0}^{\ell-k} (-1)^{k'} [S^{\ell-k-k'} {\CE}][\wedge^{k'} V] \right)  \left[\frac {\det \CE}{\det V} \right] [\CK_C]^{-\ell+k}  &\text{if } \ell \geq 0 \\
(-1)^{r-1} \sum_{k=r}^{-\ell} \frac {[S^{k-r} {\CE}^\vee]}{[\det \CE]} \left( \sum_{k'=0}^{-\ell-k}  (-1)^{k'}  [S^{-k-k'-\ell} {\CE}^\vee][\wedge^{k'} V^\vee]  \right) [\CK_C]^{-\ell-k}  &\text{if } \ell < 0
\end{cases}
\end{equation}

\begin{proposition}
\label{prop:compactly}

Formula \eqref{eqn:rel 7 bis} can be written compactly as
\begin{equation}
\label{eqn:rel 7}
[\textcolor{red}{\ke_i}, \textcolor{blue}{\kf_j}] = \Delta_* \left(\text{multiplication by } \int_{\infty - 0} z^{i+j} \cdot \frac {\wedge^\bullet\left(\frac {zq}{V}\right)}{\wedge^\bullet\left(\frac {\CE}z\right)\wedge^\bullet\left(\frac {zq}{\CE}\right)} \right)
\end{equation}
for all $i,j \in \BZ$, where $\int_{\infty-0} R(z)$ denotes the constant term in the expansion of a rational function $R(z)$ near $z=\infty$ minus the constant term in the expansion of $R(z)$ near $z=0$. 

\end{proposition}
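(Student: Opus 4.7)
The proposition is a purely formal $K$-theoretic identity: it asserts that the explicit class $[\alpha_\ell]$ in \eqref{eqn:rel 7 bis bis} coincides with the residue-type expression in \eqref{eqn:rel 7} once we identify $q = [\CK_C]$ and treat $z$ as a formal variable. My plan is to prove this by a direct Laurent expansion of the integrand at $z=\infty$ and at $z=0$ and matching the $z^0$ coefficient of $z^\ell$ times the rational function with $[\alpha_\ell]$ term by term.

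For the expansion near $z = \infty$, I would apply the splitting principle and factor out the leading terms in $zq$. One obtains
\begin{align*}
\frac{1}{\wedge^\bullet(\CE/z)} &= \sum_{n \geq 0} [S^n \CE]\, z^{-n}, \qquad \wedge^\bullet(zq/V) = \frac{1}{\det V}\sum_{k=0}^r [\wedge^k V]\,(-zq)^{r-k}, \\
\frac{1}{\wedge^\bullet(zq/\CE)} &= \frac{\det \CE}{(-zq)^r}\sum_{m \geq 0} [S^m \CE]\,(zq)^{-m},
\end{align*}
so that after cancelling the $(-zq)^r$ prefactors the integrand becomes
$$z^\ell \cdot \frac{\det \CE}{\det V}\sum_{k,n,m}(-1)^k[\wedge^k V][S^n\CE][S^m\CE]\, z^{-k-n-m}\, q^{-k-m}.$$
The $z^0$ coefficient runs over triples $(n,m,k)$ with $n+m+k = \ell$, $n,m \geq 0$, $0 \leq k \leq r$. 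Re-indexing this triple sum so that $k$ plays the role of $k'$ in \eqref{eqn:rel 7 bis bis} and one of the $[S^\bull \CE]$ factors plays the role of the outer $k$ there, one recovers the $\ell \geq 0$ formula term by term. For $\ell < 0$ no admissible nonnegative triple exists, so the $z = \infty$ contribution vanishes.

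The expansion near $z = 0$ is the mirror image: one now factors $1/\wedge^\bullet(\CE/z) = ((-z)^r/\det \CE)\sum_{n \geq 0} z^n [S^n \CE^\vee]$, and keeps $\wedge^\bullet(zq/V) = \sum_k(-zq)^k[\wedge^k V^\vee]$ and $1/\wedge^\bullet(zq/\CE) = \sum_m (zq)^m [S^m \CE^\vee]$ as power series in $zq$. A parallel extraction of the $z^0$ coefficient recovers the $\ell < 0$ formula of \eqref{eqn:rel 7 bis bis}: the factor $1/[\det \CE]$ appears from the factoring step, and the $(-1)^{r-1}$ prefactor combines the $(-1)^r$ from that factoring with the overall minus sign of $\int_{\infty-0}$, matching the cohomological shift $[-r+1]$ visible in \eqref{eqn:smaller}. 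The $z = 0$ contribution vanishes for $\ell \geq 0$ by the same combinatorial reason, so subtracting the two expansions produces $\int_{\infty - 0}z^\ell \cdot (\text{rational function}) = [\alpha_\ell]$ for every $\ell \in \BZ$. The only real obstacle is the bookkeeping: tracking the $(-1)^r$ signs through the two factoring steps and collapsing the triple sum into the nested double sum of \eqref{eqn:rel 7 bis bis}; no new geometric input is needed beyond the formal machinery already encoded in \eqref{eqn:push k} and \eqref{eqn:push k virtual}.
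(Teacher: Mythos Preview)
Your proposal is correct and follows essentially the same approach as the paper: both expand the rational function as a Laurent series near $z=\infty$ and near $z=0$, extract the constant term, and match the resulting triple sum against the double sum in \eqref{eqn:rel 7 bis bis}. The paper only writes out the $\ell \geq 0$ case (using the identity $\wedge^\bullet(zq/V)/\wedge^\bullet(zq/\CE) = [\det\CE/\det V]\cdot \wedge^\bullet(V/(zq))/\wedge^\bullet(\CE/(zq))$ rather than your separate factoring, but this is the same manipulation) and leaves $\ell < 0$ as an exercise, whereas you sketch both; your sign bookkeeping for the $\ell<0$ case is correct.
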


\begin{proof}

We need to prove that 
\begin{equation}
\label{eqn:question}
\int_{\infty - 0} z^{\ell} \cdot \frac {\wedge^\bullet\left(\frac {zq}{V}\right)}{\wedge^\bullet\left(\frac {\CE}z\right)\wedge^\bullet\left(\frac {zq}{\CE}\right)} 
\end{equation}
equals the object $[\alpha_{\ell}]$ of \eqref{eqn:rel 7 bis bis}. We will do so when $\ell \geq 0$, and leave the case $\ell < 0$ as an exercise to the reader. When $\ell \geq 0$, the expansion \eqref{eqn:question} has no constant term near $z = 0$, because it has order $z^{\ell+r}$ and higher. Meanwhile, near $z = \infty$, we have
$$
\frac {\wedge^\bullet\left(\frac {zq}{V}\right)}{\wedge^\bullet\left(\frac {\CE}z\right)\wedge^\bullet\left(\frac {zq}{\CE}\right)} = \frac {\wedge^\bullet\left(\frac {V}{zq}\right)}{\wedge^\bullet\left(\frac {\CE}z\right)\wedge^\bullet\left(\frac {\CE}{zq}\right)} \cdot \left[\frac {\det \CE}{\det V} \right] = 
$$
$$
= \left(\sum_{k=0}^{\infty} \frac {[S^k\CE]}{z^k} \right)\left(\sum_{k'=0}^{\infty} (-1)^{k'} \frac {[\wedge^{k'}V]}{z^{k'}q^{k'}} \right)  \left(\sum_{k''=0}^{\infty} \frac {[S^{k''}\CE]}{z^{k''}q^{k''}} \right)\left[\frac {\det \CE}{\det V} \right]
$$
Extracting the coefficient of $z^{-\ell}$ yields precisely the right-hand side of \eqref{eqn:rel 7 bis bis}.

\end{proof}

\subsection{Shifted quantum loop $\fsl_2$}
\label{sub:action 1}

We will now define an abstract algebraic version of relations \eqref{eqn:rel 1}--\eqref{eqn:rel 6} and \eqref{eqn:rel 7}, which we will later show to be a version of the shifted quantum loop algebra of type $\fsl_2$ (\cite{ft}).

\begin{definition}
\label{def:quantum loop}

Fix $r \in \BN$. Shifted quantum loop $\fsl_2$ is the $\BZ[q,q^{- 1}]$-algebra
\begin{equation}
\label{eqn:def loop}
\UU = \Big\langle \qe_i,\qf_i,\qm_j,p_j \Big \rangle_{i, j \in \BZ, j \in \{0,\dots,r\}} \Big/ \Big(\text{relations \eqref{eqn:rel 0 loop}--\eqref{eqn:rel 7 loop}}\Big) 
\end{equation}
where $p_0,\dots,p_r$ are central elements with $p_0,p_r$ invertible, and
\begin{equation}
\label{eqn:rel 0 loop}
\qm_0 = 1, \qm_r \text{ is invertible, and } [\qm_i,\qm_j] = 0
\end{equation}
for all $i,j\in \{0,\dots,r\}$,
\begin{equation}
\label{eqn:rel 1 loop}
[\qe_i,\qe_j] = (1-q) \sum_{k=i}^{j-1} \qe_k \qe_{i+j-k} 
\end{equation}
\begin{equation}
\label{eqn:rel 2 loop}
[\qf_j,\qf_i] = (1-q) \sum_{k=i}^{j-1} \qf_{i+j-k} \qf_k 
\end{equation}
for all integers $i \leq j$, 
\begin{equation}
\label{eqn:rel 3 loop}
[\qe_i,\qm_j] = (1-q) \sum_{k=1}^{j} (-1)^{k+1} \qe_{i+k} \qm_{j-k}
\end{equation}
\begin{equation}
\label{eqn:rel 4 loop}
[\qm_j,\qf_i] = (1-q) \sum_{k=1}^{j} (-1)^{k+1} \qm_{j-k} \qf_{i+k}
\end{equation}
for all $i \in \BZ$ and $j \in \{0,\dots,r\}$,
\begin{equation}
\label{eqn:rel 5 loop}
\qm_r \qe_i = q \qe_i \qm_r
\end{equation}
\begin{equation}
\label{eqn:rel 6 loop}
\qf_i \qm_r = q \qm_r \qf_i
\end{equation}
for all $i \in \BZ$, as well as
\begin{equation}
\label{eqn:rel 7 loop}
[\qe_i,\qf_j] = (1-q) \begin{cases} \qh_{i+j}^+ &\text{if } i+j \geq 0 \\ 0 &\text{if } - r < i+j < 0 \\ - \qh_{i+j}^- &\text{if } i+j \leq -r \end{cases}
\end{equation}
Above, we define the following power series (let $\qm(z) = \sum_{k=0}^r (-1)^k \qm_kz^{-k}$)
\begin{equation}
\label{eqn:h plus series}
\qh^+(z) = \sum_{k=0}^{\infty} \frac {\qh^+_k}{z^k} = \frac {\qm_r P(z)}{\qm(z)\qm(zq)} 
\end{equation}
\begin{equation}
\label{eqn:h minus series}
\qh^-(z) = \sum_{k=r}^{\infty} \qh^-_{-k} z^k = \frac {\qm_r P(z)}{\qm(z)\qm(zq)} \end{equation}
for $P(z) = p_0  + \dots + p_r z^{-r}$. While the right-hand sides of \eqref{eqn:h plus series} and \eqref{eqn:h minus series} are identical rational functions, we expand them in opposite powers of $z$ in the two formulas.

\end{definition}

\begin{remark}

The word ``shifted" in the name of \eqref{eqn:def loop} refers to the fact that the series $\qh^+(z)$ and $\qh^-(z)$ start at different powers of $z$ (the former at $z^0$ and the latter at $z^r$). This is the fundamental property of shifted quantum loop algebras, which were first defined in \cite{ft} as certain $\BQ(q)$ algebras which prominently featured the relation
\begin{equation}
\label{eqn:quadratic}
\qe(z)\qe(w) (z-wq) = \qe(w) \qe(z) (zq-w)
\end{equation}
where $\qe(z) = \sum_{i \in \BZ} \frac {\qe_i}{z^i}$. In terms of coefficients, \eqref{eqn:quadratic} means that for all $i,j \in \BZ$
\begin{equation}
\label{eqn:quadratic explicit}
\qe_{i+1} \qe_j - q \qe_i \qe_{j+1} = q \qe_j \qe_{i+1} - \qe_{j+1} \qe_i
\end{equation}
It is easy to see that \eqref{eqn:quadratic explicit} follows from \eqref{eqn:rel 1 loop}, and the converse is also true if we invert the integer 2. Thus, Definition \ref{def:quantum loop} is a certain integral version of the usual $\BQ(q)$-algebra that is usually referred to as shifted quantum loop $\fsl_2$. Our particular integral version is naturally well-suited for the categorification carried out in the present paper.

\end{remark}

\subsection{From representation theory to geometry} 
\label{sub:action 2}

We will now bridge the geometric formulas in \eqref{eqn:rel 1}--\eqref{eqn:rel 7} with the representation theoretic formulas in \eqref{eqn:rel 1 loop}--\eqref{eqn:rel 7 loop}. In what follows, we will write for all $n \in \BN$
$$
\KK_{\quot \times C^n} = \bigoplus_{d=0}^{\infty} \KK_{\quot_d \times C^n}
$$
The following notion was introduced in \cite[Definition 4.14]{n}.

\begin{definition}
\label{def:action}

An action $\UU \curvearrowright \KK_{\quot}$ is a $\BQ$-linear map
\begin{equation}
\label{eqn:linear map}
\UU \xrightarrow{\Phi} \emph{Hom} (\KK_{\quot}, \KK_{\quot \times C})
\end{equation}
satisfying the following properties for all $x,y \in \UU$:

\begin{enumerate}
	
\item $\Phi(1) = \pi^*$, where $\pi : \quot \times C \rightarrow \quot$ is the natural projection map;

\item $\Phi(q x)$ coincides with the composition:
\begin{equation}
\label{eqn:const}
\KK_{\quot}  \xrightarrow{\Phi(x)} \KK_{\quot \times C} \xrightarrow{\emph{Id}_{\quot} \times \emph{multiplication by } [\CK_C]}  \KK_{\quot \times C} 
\end{equation}
(where $\CK_C$ denotes the canonical line bundle on $C$, pulled back to $\quot \times C$);

\item $\Phi(\textcolor{red}{x} \textcolor{blue}{y})$ coincides with the composition:
\begin{equation}
\label{eqn:hom}
\KK_{\quot} \xrightarrow{\Phi(\textcolor{blue}{y})} \KK_{\quot \times \textcolor{blue}{C}} \xrightarrow{\Phi(\textcolor{red}{x}) \times \emph{Id}_{\textcolor{blue}{C}}} \KK_{\quot \times \textcolor{red}{C} \times \textcolor{blue}{C}} \xrightarrow{\emph{Id}_{\quot} \times \Delta^*} \KK_{\quot \times C}
\end{equation}

\item $\Delta_* \Phi \left( \frac {[\textcolor{red}{x},\textcolor{blue}{y}]}{1-q} \right)$ coincides with the difference between the compositions:
\begin{align*}
&\KK_{\quot}\xrightarrow{\Phi(\textcolor{blue}{y})} \KK_{\quot \times \textcolor{blue}{C}} \xrightarrow{\Phi(\textcolor{red}{x}) \times \emph{Id}_{\textcolor{blue}{C}}} \KK_{\quot \times \textcolor{red}{C} \times \textcolor{blue}{C}} \\
&\KK_{\quot} \xrightarrow{\Phi(\textcolor{red}{x})} \KK_{\quot \times \textcolor{red}{C}} \xrightarrow{\Phi(\textcolor{blue}{y}) \times \emph{Id}_{\textcolor{red}{C}}} \KK_{\quot \times \textcolor{red}{C} \times \textcolor{blue}{C}} 
\end{align*}
We are implicitly using the fact that $[x,y]$ is a multiple of $1-q$ for any $x,y \in \UU$, which is easily seen to be the case from relations \eqref{eqn:rel 0 loop}--\eqref{eqn:rel 7 loop}.

\end{enumerate}

\end{definition}

The motivation for our notion of action is to systematize the idea of ``operators indexed by $\KK_C$" without invoking the K\"unneth decomposition (which does not hold in algebraic $K$-theory for $C \not \cong \BP^1$). Indeed, from \eqref{eqn:linear map} one can obtain operators
\begin{equation}
\label{eqn:linear operators}
\Phi(x)_{\gamma} \in \text{End}(\KK_{\quot}), \qquad \Phi(x)_{\gamma} = \text{proj}_{\quot *} \left( \text{proj}_C^*(\gamma) \otimes \Phi(x) \right)
\end{equation}
for any $x \in \UU$ and $\gamma \in \KK_C$. It is straightforward to deduce from axioms (1)-(4) above that the operators \eqref{eqn:linear operators} obey the relations in a suitably-defined version of the algebra $\UU$ defined over the parameter space $\KK_C$ (akin to the classic construction of Heisenberg algebra actions on the homology of Hilbert schemes from \cite{nakajima}). We prefer the formalism of Definition \ref{def:action} to that of the operators \eqref{eqn:linear operators}, because the former provides a richer construction in situations where the K\"unneth decomposition fails, and because it makes computations more direct.

Comparing formulas \eqref{eqn:rel 1}--\eqref{eqn:rel 7} with \eqref{eqn:rel 1 loop}--\eqref{eqn:rel 7 loop} shows that the assignments
$$
\qe_i \text{ of \eqref{eqn:def loop}} \leadsto \ke_i \text{ of \eqref{eqn:e operator}}
$$
$$
\qf_i \text{ of \eqref{eqn:def loop}} \leadsto \kf_i \text{ of \eqref{eqn:f operator}}
$$
$$
\qm_i \text{ of \eqref{eqn:def loop}} \leadsto \km_i \text{ of \eqref{eqn:m operator}}
$$
and
$$
P(z) \text{ of \eqref{eqn:h plus series}--\eqref{eqn:h minus series}} \leadsto \text{tensoring with } \frac 1{\det V} \wedge^\bullet\left(\frac V{zq} \right)
$$
give rise to an action
\begin{equation}
\label{eqn:final action}
\UU \curvearrowright \KK_{\quot}.
\end{equation}

\subsection{Divided powers}

A well-known feature of quantum (loop) groups is that $\qe_i,\qf_i$ are the correct generators of the shifted quantum loop group only over $\BQ(q)$. Over $\BZ[q,q^{-1}]$, one should instead consider the algebra generated by the divided powers
\begin{equation}
\label{eqn:divided powers}
\qe_i^{(n)} = \frac {\qe_i^n}{[n]_q!} \quad \text{and} \quad \qf_i^{(n)} = \frac {\qf_i^n}{[n]_q!}
\end{equation}
where $[n]_q! = (1+q)(1+q+q^2)\dots(1+q+\dots+q^{n-1})$. However, we do not know an explicit set of relations generalizing \eqref{eqn:rel 1 loop} and \eqref{eqn:rel 2 loop}, which completely govern the interaction between the divided powers \eqref{eqn:divided powers}. At the categorical level, we expect that the ``divided power" analogues of \eqref{eqn:e intro} and \eqref{eqn:f intro} should be
\begin{align}
&\DD_{\quot_d} \xrightarrow{\te_i^{(n)}} \DD_{\quot_{d+n} \times C}, \quad \te_i^{(n)}(\gamma) = R(p^{(n)}_+ \times p^{(n)}_C)_* \Big(\CL^{i} \otimes L{p_-^{(n)}}^* (\gamma) \Big) \label{eqn:e intro power} \\
&\DD_{\quot_{d+n}} \xrightarrow{\tf_i^{(n)}} \DD_{\quot_{d} \times C}, \quad \tf^{(n)}_i(\gamma) = R(p_-^{(n)} \times p^{(n)}_C)_* \Big(\CL^{i} \otimes L{p_+^{(n)}}^* (\gamma) \Big) \label{eqn:f intro power}
\end{align}
where we consider the following generalization of diagram \eqref{eqn:diagram intro}
\begin{equation}
\label{eqn:diagram intro power}
\xymatrix{& \quot^\bullet_{d,d+n} \ar[ld]_{p^{(n)}_-} \ar[d]^{p^{(n)}_C} \ar[rd]^{p^{(n)}_+} & \\ \quot_{d} & C & \quot_{d+n}}
\end{equation}
where $\quot_{d,d+n}^\bullet$ is the scheme parameterizing $\{E' \subset E \subset V\}$ with the first injection being colength $n$ and supported at a single (but arbitrary) point $x \in C$, and $\CL = \CL_1$ is the line bundle defined by analogy with Subsection \ref{sub:basic nested}. It would be interesting to find a complete set of categorical relations between the functors \eqref{eqn:e intro power} and \eqref{eqn:f intro power}. 

\vskip.2in

\section{The derived category of Quot schemes}
\label{sec:steppingstone}

\vskip.2in

\subsection{Indexing}
\label{sub:indexing}

We will consider non-decreasing sequences
\begin{equation}
\label{eqn:uk}
(k_1 \leq \dots \leq k_d) \in \{0,1,\dots,r-1\}^d
\end{equation}
which are in one-to-one correspondence with compositions of $d$ in $r$ parts
\begin{equation}
\label{eqn:bd}
\bd = (d_0, d_1, \dots, d_{r-1})
\end{equation}
simply by letting $d_i$ be the number of times the number $i$ appears in a sequence \eqref{eqn:uk}. Both sequences and compositions can be ordered lexicographically, i.e.
$$
(k_1,\dots,k_d) > (k_1',\dots,k'_d)
$$
if and only if $k_1>k'_1$ or ($k_1=k_1'$ and $k_2>k_2'$) etc. It is easy to see that two non-decreasing sequences are in a certain lexicographic order with respect to each other if and only if the associated compositions satisfy the opposite lexicographic order.

\subsection{Symmetric powers of curves} 
\label{sub:seq and comp}

For any $d \geq 0$, let us write 
\begin{equation}
\label{eqn:k! to 1}
\rho_{(d)} : C^d \rightarrow C^{(d)}
\end{equation}
for the natural map from the $d$-th power of $C$ to the $d$-th symmetric power of $C$, which is defined as $C^{(d)} = C^d / S_d$ with respect to the standard permutation action. More generally, for any composition $\bd$ as in \eqref{eqn:bd}, we will write
$$
C^{(\bd)} = C^{(d_0)} \times C^{(d_1)} \times \dots \times C^{(d_{r-1})} = C^d / S_{\bd}
$$
where we consider the standard embedding
\begin{equation}
\label{eqn:group}
S_{\bd} = S_{d_0} \times S_{d_1} \times \dots \times S_{d_{r-1}} \hookrightarrow S_d
\end{equation}
The obvious product of the maps \eqref{eqn:k! to 1} will be denoted by $\rho_{\bd} : C^d \rightarrow C^{(\bd)}$, and we note that it is a finite morphism between smooth varieties. Therefore, its direct image and inverse image functors are exact, but we will still denote them by
\begin{equation}
\label{eqn:push pull powers}
\DD_{C^d} \xleftrightharpoons[R\rho_{\bd*}]{L\rho_{\bd}^*} \DD_{C^{(\bd)}}
\end{equation}
in order to emphasize the fact that they are functors between derived categories.

\subsection{Transposed functors}

In \eqref{eqn:e functor}, we defined $\te_i$ as the functor $\DD_{\quot_{d-1}} \rightarrow \DD_{\quot_d \times C}$ associated with the correspondence
$$
R(p_- \times p_+ \times p_C)_*(\CL^i) \in \DD_{\quot_{d-1} \times \quot_d \times C}
$$
We may switch perspective and think of this correspondence as inducing a functor
\begin{equation}
\label{eqn:switch perspective}
\be_i : \DD_{\quot_{d-1} \times C} \rightarrow \DD_{\quot_d}
\end{equation}
More generally, we may iterate $d$ of the functors $\be_i$, and consider
$$
\be_{k_1} \circ \dots \circ \be_{k_d} : \DD_{\quot_{d'-d} \times C^d} \rightarrow \DD_{\quot_{d'}}
$$
for any $d' \geq d$, with the functor $\be_{k_a}$ acting in the $a$-th factor of $C^d$.

\begin{definition}
\label{def:basis}

For any composition $\bd = (d_0,d_1, \dots, d_{r-1})$ of $d$, let 
\begin{equation}
\label{eqn:basis}
\be_{\bd} = \underbrace{\be_{0} \circ \dots \circ \be_{0}}_{d_{0} \text{ times}} \circ  \underbrace{\be_{1} \circ \dots \circ \be_{1}}_{d_1 \text{ times}} \circ \dots \circ \underbrace{\be_{{r-1}} \circ \dots \circ \be_{{r-1}}}_{d_{r-1} \text{ times}} : \DD_{C^d} \rightarrow \DD_{\quot_{d}}.
\end{equation}

\end{definition}

\subsection{A stepping stone}

The following proposition is a stepping stone in the direction of Theorem \ref{thm:intro semi}. However, the reason why this result does not yield a semi-orthogonal decomposition of $\DD_{\quot_d}$ is that the essential images of the functors \eqref{eqn:basis} do not generate the entire $\DD_{\quot_d}$. This will be explored and remedied in Section \ref{sec:semi}.

\begin{proposition}
\label{prop:semi}

For any compositions $\bd, \bd'$ of $d$, we have identifications
\begin{equation}
\label{eqn:semi}
\emph{Hom}_{\quot_d} \Big ( \be_{\bd}  (\gamma), \be_{\bd'}  (\gamma') \Big) \cong \begin{cases} 0&\text{if } \bd < \bd' \\ \emph{Hom}_{C^{(\bd)}} \Big(R\rho_{\bd*}(\gamma), R\rho_{\bd*}(\gamma') \Big) &\text{if }\bd = \bd' \end{cases}
\end{equation}
natural in $\gamma,\gamma' \in \emph{Ob}(\DD_{C^d})$. 

\end{proposition}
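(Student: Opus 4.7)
The plan is to compute $\mathrm{Hom}_{\quot_d}(\be_{\bd}\gamma, \be_{\bd'}\gamma')$ directly by derived base change on the self-fiber product over $\quot_d$ of the full nested Quot scheme $\mathsf{X} := \quot_{1,\ldots,d}$, and then to analyze the geometric decomposition of this fiber product. Each $\be_{\bd}$ is a Fourier--Mukai transform with kernel $\mathcal{M}_{\bd} := \bigotimes_{a=1}^d \CL_a^{k_a}$ on $\mathsf{X}$, where $(k_1\leq\cdots\leq k_d)$ is the non-decreasing sequence corresponding to $\bd$; explicitly, $\be_{\bd}(\gamma) = Rp_{d*}(\mathcal{M}_{\bd}\otimes Lp_{C^d}^*\gamma)$ for $p_d: \mathsf{X}\to\quot_d$ remembering the terminal sheaf and $p_{C^d}: \mathsf{X}\to C^d$ the full support map of \eqref{eqn:support}. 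Adjunction then converts the desired Hom to a computation on the derived fiber product $\mathsf{X}\times_{\quot_d}\mathsf{X}$, which parametrizes pairs of flags in $V$ with common terminal sheaf.

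The crucial geometric input will be the decomposition of $\mathsf{X}\times_{\quot_d}\mathsf{X}$ over the open locus where the $d$ support points are distinct: there it breaks up as a disjoint union $\bigsqcup_{\sigma\in S_d}\mathsf{X}_\sigma$, with each component isomorphic to $\mathsf{X}$ via the first projection, and the second projection differing by the permutation $\sigma$ acting on the flag steps. After tracking the pullback of $\mathcal{M}_{\bd'}$ across this identification, the contribution of $\mathsf{X}_\sigma$ reduces---via the iterated $\mathbb{P}^{r-1}$-bundle structure of $\mathsf{X}\to C^d$ (coming from Lemma~\ref{lem:pi minus}) and Lemma~\ref{lem:push d}---to
$$
\mathrm{Hom}_{C^d}\!\Bigl(\gamma,\; \sigma^*\gamma' \otimes \bigotimes_{b=1}^d R\pi_{b*}\CO\bigl(k'_{\sigma(b)} - k_b\bigr)\Bigr).
$$

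By Lemma~\ref{lem:push d}, the factor $R\pi_{b*}\CO(k'_{\sigma(b)}-k_b)$ vanishes exactly when $k_b > k'_{\sigma(b)}$, since $k_b,k'_{\sigma(b)}\in\{0,\ldots,r-1\}$ force the twist into the Orlov bad range $\{-r+1,\ldots,-1\}$. To conclude the vanishing when $\bd<\bd'$, I will establish the following combinatorial claim: if the non-decreasing sequence $(k)$ is lexicographically greater than $(k')$, then for every $\sigma\in S_d$ there exists an index $b$ with $k_b>k'_{\sigma(b)}$. Indeed, the negation $(k_b)\leq(k'_{\sigma(b)})$ pointwise would, after sorting both sides, force $k_b\leq k'_b$ for all $b$, contradicting $(k)>(k')$ lex. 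For the identification when $\bd=\bd'$, the surviving contributions are precisely those with $\sigma$ in the stabilizer $S_{\bd}$ of the non-decreasing sequence, and they assemble into $\mathrm{Hom}_{C^{(\bd)}}(R\rho_{\bd*}\gamma, R\rho_{\bd*}\gamma')$ via the description of $\rho_{\bd}^!R\rho_{\bd*}$ for the finite quotient map $\rho_{\bd}:C^d\to C^{(\bd)}$.

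The main obstacle will be extending the generic decomposition of $\mathsf{X}\times_{\quot_d}\mathsf{X}$ across the non-generic locus, where support points coincide and the components $\mathsf{X}_\sigma$ intersect nontrivially. There the derived fiber product may acquire higher Tor contributions, which must be controlled either by a codimension argument or by appealing to the categorical commutation relations of Section~\ref{sec:main} (in particular, the quadruple moduli space of Subsection~\ref{sub:quadruples} is tailored to model colliding adjacent flag steps). Correctly matching the $\bd=\bd'$ case with the ramification of $\rho_{\bd}$ along the diagonals of $C^d$ is a closely related technical point that will need careful attention.
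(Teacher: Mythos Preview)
Your generic-locus analysis is sound, and the combinatorial lemma is correct, but the proposal has a genuine gap precisely where you flag it: extending past the locus of distinct support points. This is not a side issue that a codimension count will dispose of. You are computing derived $\mathrm{Hom}$'s, so contributions from the boundary strata of $\mathsf{X}\times_{\quot_d}\mathsf{X}$ (and higher $\mathrm{Tor}$ terms of the derived fiber product) can and do survive; the fact that the final answer in the $\bd=\bd'$ case is $\mathrm{Hom}_{C^{(\bd)}}(R\rho_{\bd*}\gamma,R\rho_{\bd*}\gamma')$, which itself involves the full scheme structure of $C^d\times_{C^{(\bd)}}C^d$ along the diagonals, already tells you that the non-generic locus contributes nontrivially. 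A direct stratified analysis of $\mathsf{X}\times_{\quot_d}\mathsf{X}$ would require you to control, for each degeneration pattern of the support points, the precise interaction of the components $\mathsf{X}_\sigma$ and their derived intersections --- and you have not indicated how to do this.

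The paper sidesteps this entirely by a different adjunction. Rather than passing to the self-fiber product of $\mathsf{X}$, it identifies the \emph{left adjoint} of each individual $\be_i$ as a concrete functor $\tf'_{-i-r}=(\det\CE\otimes\tf_{-i-r})[r-1]$ (Lemma \ref{lem:adjoint}), and thereby reduces the problem to computing the functor composition
\[
\tf'_{-k'_d-r}\circ\cdots\circ\tf'_{-k'_1-r}\circ\be_{k_1}\circ\cdots\circ\be_{k_d}:\DD_{C^d}\to\DD_{C^d}.
\]
The point is that the $[\te_i,\tf_j]$ commutation relations of Proposition \ref{prop:e and f} now apply directly: for $j<i$ one gets $\tf'_{-j-r}\circ\be_i\cong\be_i\circ\tf'_{-j-r}(-\Delta)$ exactly, and for $j=i$ the cone is the identity functor. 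Sliding the $\tf'$'s to the right one at a time gives the vanishing immediately when $(k)>_{\mathrm{lex}}(k')$, and in the equal case reduces everything to an explicit induction proving $F^dE^d\cong\CO_{C^d\times_{C^{(d)}}C^d}$ as a correspondence. The twist by $\CO(-\Delta)$ in \eqref{eqn:commute 3}--\eqref{eqn:commute 4} is exactly what encodes the diagonal contributions you were worried about; it comes out of the quadruple moduli space analysis of Section \ref{sec:main}, but packaged as a clean functorial statement rather than a stratum-by-stratum computation on $\mathsf{X}\times_{\quot_d}\mathsf{X}$.
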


\begin{proof} The main technical tool is the following adjunction between functors.

\begin{lemma}
\label{lem:adjoint}

For any $i \in \BZ$ and $d > 0$, there are identifications
\begin{equation}
\label{eqn:lem adjoint}
\emph{Hom}_{\quot_d} \Big(\alpha, \be_i(\beta)\Big) \cong \emph{Hom}_{\quot_{d-1} \times C} \Big(\tf'_{-i-r}(\alpha), \beta \Big) 
\end{equation}
and
\begin{equation}
\label{eqn:lem adjoint opp}
\emph{Hom}_{\quot_d} \Big(\be_i(\beta), \alpha \Big) \ \cong \ \emph{Hom}_{\quot_{d-1} \times C} \Big(\beta, \tf''_{-i}(\alpha) \Big)
\end{equation}
natural in $\alpha \in \emph{Ob}(\DD_{\quot_d})$ and $\beta \in \emph{Ob}(\DD_{\quot_{d-1} \times C})$, where we write
\begin{equation}
\label{eqn:switch perspective prime}
\tf'_{-i-r} = (\det \CE \otimes \tf_{-i-r})[r-1] : \DD_{\quot_d} \rightarrow \DD_{\quot_{d-1} \times C}
\end{equation}
and
\begin{equation}
\label{eqn:switch perspective double prime}
\tf''_{-i} = \frac {\det V}{\det \CE} \otimes \tf_{-i} : \DD_{\quot_d} \rightarrow \DD_{\quot_{d-1} \times C}
\end{equation}
for all $i \in \BZ$.

\end{lemma}

\noindent We will first show how to use Lemma \ref{lem:adjoint} to conclude the proof of Proposition \ref{prop:semi}, and then prove the Lemma at the very end. Iterating \eqref{eqn:lem adjoint} a number of $d$ times gives us
$$
\text{Hom}_{\quot_d} \Big(\alpha, \be_{k'_1} \circ \dots \circ \be_{k'_d}  (\gamma') \Big) \cong \text{Hom}_{C^d} \Big(\tf'_{-k'_d-r} \circ \dots \circ \tf'_{-k'_1-r}(\alpha), \gamma'\Big),
$$
with $\alpha \in \text{Ob}(\DD_{\quot_d})$ and $\gamma' \in \text{Ob}(\DD_{C^d})$. If we let $\alpha = \be_{k_1} \circ \dots \circ \be_{k_d}  (\gamma)$, then
\begin{multline}
\label{eqn:long adjoint}
\text{Hom}_{\quot_d} \Big(\be_{k_1} \circ \dots \circ \be_{k_d}  (\gamma), \be_{k'_1} \circ \dots \circ \be_{k'_d}  (\gamma') \Big) \cong \\ \cong \text{Hom}_{C^d} \Big(\tf'_{-k'_d-r} \circ \dots \circ \tf'_{-k'_1-r} \circ \be_{k_1} \circ \dots \circ \be_{k_d}  (\gamma), \gamma' \Big),
\end{multline}
where naturally $\gamma, \gamma' \in \text{Ob}(\DD_{C^d})$. Therefore, the content of Proposition \ref{prop:semi} comes down to the following isomorphism of functors $\DD_{C^d} \rightarrow \DD_{C^d}$, where we let $(k_1 \leq \dots \leq k_d)$ and $(k_1' \leq \dots \leq k_d')$ be the non-decreasing sequences associated to the compositions $\bd$ and $\bd'$, respectively, as explained in Subsection \ref{sub:indexing}:
\begin{multline}
\label{eqn:long composition}
\tf'_{-k'_d-r} \circ \dots \circ \tf'_{-k'_1-r} \circ \be_{k_1} \circ \dots \circ \be_{k_d}  \cong \\ \cong \begin{cases} 0 &\text{if } (k_1,\dots,k_d) > (k_1',\dots,k_d') \\ L\rho_{\bd}^* \circ R\rho_{\bd*} &\text{if }(k_1,\dots,k_d) = (k_1',\dots,k_d') \end{cases}
\end{multline}
To establish formula \eqref{eqn:long composition}, recall that Proposition \ref{prop:e and f} governs the commutation relation between the functors $\te_i$ and $\tf_j$. Since the commutation relations between the functors $\be_i$ and $\tf_j$ is an equivalent computation (simply a matter of moving a factor of $C$ from the codomain to the domain), we obtain for all $0 \leq j < i < r$
\begin{equation}
\label{eqn:commute 1}
\textcolor{blue}{\tf_{-j-r}} \circ \textcolor{red}{\be_i} \cong \textcolor{red}{\be_i} \circ \textcolor{blue}{\tf_{-j-r}}
\end{equation}
(because $\alpha_{\ell} = 0$ for $\ell \in \{-r+1,\dots,-1\}$), while for all $i \in \{0,\dots,r-1\}$ we have
\begin{equation}
\label{eqn:commute 2}
\text{Cone} \Big[\textcolor{red}{\be_i} \circ \textcolor{blue}{\tf_{-i-r}} \longrightarrow \textcolor{blue}{\tf_{-i-r}} \circ \textcolor{red}{\be_i} \Big] \cong \Big\{ \text{functor of tensoring with }(\det \CE)^{-1}[-r+1] \Big\}.
\end{equation}

\begin{lemma}

For all $0 \leq j < i < r$, we have
\begin{equation}
\label{eqn:commute 3}
\textcolor{blue}{\tf'_{-j-r}} \circ \textcolor{red}{\be_i} \cong \textcolor{red}{\be_i} \circ \textcolor{blue}{\tf'_{-j-r}} (-\Delta)
\end{equation}
while for all $i \in \{0,\dots,r-1\}$, we have
\begin{equation}
\label{eqn:commute 4}
\emph{Cone} \Big[\textcolor{red}{\be_i} \circ \textcolor{blue}{\tf'_{-i-r}} (-\Delta) \longrightarrow \textcolor{blue}{\tf'_{-i-r}} \circ \textcolor{red}{\be_i} \Big] \cong \emph{Id}_{\quot_d \times C}.
\end{equation}
\end{lemma}

\begin{proof} Because $\tf'_i = (\det \CE \otimes \tf_i)[r-1]$, formulas \eqref{eqn:commute 3} and \eqref{eqn:commute 4} are immediate consequences of formulas \eqref{eqn:commute 1} and \eqref{eqn:commute 2}, respectively, together with the fact that
\begin{equation}
\label{eqn:temp temp}
\textcolor{blue}{\det \CE} \otimes \textcolor{red}{\be_i}(-) = \textcolor{red}{\be_i}(\textcolor{blue}{\det \CE} \otimes -) (-\Delta).
\end{equation}
Formula \eqref{eqn:temp temp} is simply a reformulation of  \eqref{eqn:iso}, once we move the factor of $C$ from the codomain of $\te_i$ to the domain of $\be_i$. 
    
\end{proof}

With formulas \eqref{eqn:commute 3} and \eqref{eqn:commute 4} in mind, let us show that the LHS of \eqref{eqn:long composition} vanishes if $k_1' < k_1$. Indeed, this inequality implies that we could use \eqref{eqn:commute 3} to move $\tf'_{-k_1'-r}$ unhindered to the right of all of the $\be$'s, where it would annihilate $\DD_{C^d}$. For the same reason, if $k_1' = k_1$, then the only way that moving $\tf'_{-k_1'-r}$ to the right could produce a non-zero contribution is if $\tf'_{-k_1'-r}$ precisely annihilates some $\be_{k_i}$ with $k_i = k_1$ and leaves an identity functor behind, according to \eqref{eqn:commute 4}. Repeating this argument for $\tf'_{-k_2'-r}, \dots, \tf'_{-k_d' - r}$ shows that the LHS of \eqref{eqn:long composition} vanishes if $(k_1',\dots,k_d') < (k_1,\dots,k_d)$. 

\medskip

Meanwhile, when $(k_1',\dots,k_d') = (k_1,\dots,k_d)$, the argument above shows that the only non-trivial contributions to \eqref{eqn:long composition} happen when each $\tf'_{-i-r}$ annihilates some $\be_i$ and leaves an identity functor behind, according to \eqref{eqn:commute 4}. Thus, it suffices to consider only the composition $(d) = (0,\dots,0,d,0,\dots,0)$ and prove that
\begin{equation}
\label{eqn:brace}
\underbrace{\tf'_{-i-r} \circ \dots \circ \tf'_{-i-r}}_{d \text{ times }} \circ \underbrace{\be_{i} \circ \dots \circ \be_{i}}_{d \text{ times }} \cong L\rho_{(d)}^* \circ R\rho_{(d)*}
\end{equation}
for any $i \in \{0,\dots,r-1\}$. We will write $E = \be_i$ and $F = \tf'_{-i-r}$, and interpret these functors as correspondences. With this in mind, \eqref{eqn:brace} becomes equivalent to
\begin{equation}
\label{eqn:seek}
F^d E^d \cong \CO_{C^d \times_{C^{(d)}} C^d} \in \DD_{C^d \times C^d}.
\end{equation}
We will prove \eqref{eqn:seek} by induction on $d$. To this end, consider the following complex in $\DD_{C^d \times C^d}$ (as a general rule, we will write $\Delta_{\sharp \flat}$ for the codimension one diagonal in $C^d \times C^d$ that identifies the factors indexed by $\sharp$ and $\flat$)
$$
P_d = F^{d-1} E^d F (-\Delta_{\bullet 1} - \dots - \Delta_{\bullet d}) \longrightarrow P_{d-1} = F^{d-1} E^{d-1} F E  (-\Delta_{\bullet 1} - \dots - \Delta_{\bullet, d-1})  
$$
$$
\longrightarrow \dots \longrightarrow P_1 = F^{d-1} E F E^{d-1} (-\Delta_{\bullet 1})  \longrightarrow P_0 = F^{d} E^d 
$$
where $1,\dots,d$ denote the factors of $C$ corresponding to the $d$ copies of $E$, and $\bullet$ denotes the factor of $C$ corresponding to the right-most copy of $F$. The arrows in the complex above are induced by \eqref{eqn:commute 4}. Thus, the induction hypothesis of \eqref{eqn:seek} (for $d-1$ instead of $d$) implies
\begin{equation}
\label{eqn:cone}
\text{Cone} \Big[ P_k \rightarrow P_{k-1} \Big] = \CO_{\Delta_{\bullet k} \cap (C^d \times_{C^{(d)}} C^d)} (-\Delta_{\bullet 1} - \dots - \Delta_{\bullet,k-1})
\end{equation}
Thus, formula \eqref{eqn:seek} follows from the $k=0$ particular case of the following claim
\begin{equation}
\label{eqn:claim}
P_k \cong \CO_{(\Delta_{\bullet, k+1} \cup \dots \cup \Delta_{\bullet d}) \cap (C^d \times_{C^{(d)}} C^d)} (-\Delta_{\bullet 1} - \dots - \Delta_{\bullet,k})
\end{equation}
for all $k \in \{0,\dots,d\}$. We will prove \eqref{eqn:claim} by descending induction on $k$ (with the base case $k=d$ being trivial, as $P_d = 0$). First of all, note that formula \eqref{eqn:cone} holds if $P_k$ are replaced with the coherent sheaves in the right-hand side of \eqref{eqn:claim}, because
\begin{equation}
\label{eqn:ses z}
0 \rightarrow \CO_{Z_1}(-\Delta_{\bullet k}) \rightarrow \CO_Z \rightarrow \CO_{Z_2} \rightarrow 0 
\end{equation}
where $Z = Z_1 \sqcup Z_2$ denotes
\begin{align*}
&Z=(\Delta_{\bullet k} \cup \dots \cup \Delta_{\bullet d}) \cap (C^d \times_{C^{(d)}} C^d)\\
&Z_1=(\Delta_{\bullet, k+1} \cup \dots \cup \Delta_{\bullet d}) \cap (C^d \times_{C^{(d)}} C^d) \\
&Z_2=\Delta_{\bullet k}  \cap (C^d \times_{C^{(d)}} C^d)
\end{align*}
The intersection $Z_1 \cap Z_2$ corresponds to the divisor $\Delta_{\bullet k}$ in $Z_1$. Therefore, to prove that \eqref{eqn:cone} implies the induction step of \eqref{eqn:claim}, it suffices to show that
\begin{equation}
\label{eqn:one dim ext}
\text{Ext}^1_{C^d \times C^d} \Big(\CO_{Z_2}(-\Delta_{\bullet 1} - \dots \Delta_{\bullet, k-1}) , \CO_{Z_1} (-\Delta_{\bullet 1} - \dots - \Delta_{\bullet k}) \Big) \cong \BC
\end{equation}
with a generator of the above one-dimensional vector space given by the non-split short exact sequence \eqref{eqn:ses z}. To compute the $\text{Ext}^1$ space above, let us rearrange it as
\begin{equation}
\label{eqn:hom space}
\text{Hom}_{C^d \times C^d} \left(\CO_{Z_2} (\Delta_{\bullet k}) [-1], \CO_{Z_1}\right) \cong \text{Hom}_{Z_1}\left(\CO_{Z_2} (\Delta_{\bullet k}) [-1] \Big|_{Z_1} , \CO\right)
\end{equation}
with the isomorphism given by adjunction and the restriction being derived. To compute this derived restriction, we must resolve the structure sheaf of $Z_2 = \Delta_{\bullet k} \cap (C^d \times_{C^{(d)}} C^d)$ by vector bundles. To this end, let us choose (\'etale locally) a coordinate $x$ on $C$, and denote the corresponding coordinates on the factors of $C^d \times C^d$ by $x_1,\dots,x_d,x_\bullet, x_1',\dots,x_{d-1}'$ (the $x_i'$ correspond to the first $d-1$ copies of the correspondence $F$ in \eqref{eqn:seek}). If we let
$$
p_i = (x_1')^i + \dots + (x_{d-1}')^i
$$
then $Z_2$ is cut out by the equations $x_\bullet - x_k$ and $p_i - x_1^i - \dots - x_{k-1}^i - x_{k+1}^i - \dots - x_d^i$ for all $i \in \{1,\dots,d-1\}$ (the latter condition ensures that the unordered sets $\{x_1,\dots,x_{k-1},x_{k+1},\dots,x_d\}$ and $\{x_1',\dots,x_{d-1}'\}$ coincide). Therefore, we have
$$
\CO_{Z_2} \cong \text{Koszul} \left(x_\bullet - x_k, p_i - x_1^i - \dots - x_{k-1}^i - x_{k+1}^i - \dots - x_d^i \right)_{1 \leq i \leq d-1}
$$
in $\DD_{C^d \times C^d}$, and so the Hom space in \eqref{eqn:hom space} is
$$
\text{Hom}_{Z_1}\left( \left[ \dots \rightarrow \text{vector bundle} \xrightarrow{\left(x_\bullet - x_k,  p_i - x_1^i - \dots - x_{k-1}^i - x_{k+1}^i - \dots - x_d^i \right)_{1 \leq i \leq d-1}} \underline{\CO} \right], \CO (-\Delta_{\bullet k}) \right)
$$
with the underlined $\CO$ in homological degree 1. By dualizing, the expression above is
$$
H^0\left(Z_1, \left[ \underline{\CO(-\Delta_{\bullet k})} \xrightarrow{\left(x_\bullet - x_k,  p_i - x_1^i - \dots - x_{k-1}^i - x_{k+1}^i - \dots - x_d^i \right)_{1 \leq i \leq d-1}} \text{vector bundle} \rightarrow \dots \right] \right)
$$
with the underlined term in homological degree $-1$. However, the fact that $Z_1 \subset C^d \times_{C^{(d)}} C^d$ implies that
$$
p_i + x_\bullet^i = x_1^i + \dots + x_d^i \quad \Rightarrow \quad p_i - x_1^i - \dots - x_{k-1}^i - x_{k+1}^i - \dots - x_d^i = x_k^i - x_\bullet^i
$$
holds on $Z_1$, for all $i$. Therefore, the $H^0$ space above can be written as
$$
H^0\left(Z_1, \left[ \CO(-\Delta_{\bullet k}) \xrightarrow{x_\bullet - x_k} \underline{\CO} \right] \otimes \left[\underline{\CO} \xrightarrow{\text{multiples of }x_\bullet - x_k} \text{vector bundle} \rightarrow \dots \right] \right) = 
$$
$$
= H^0\left(Z_1 \cap Z_2, \left[ \underline{\CO} \xrightarrow{0}  \text{ vector bundle} \rightarrow \dots \right] \right) 
$$
with the underlined terms in homological degree 0 (in the equality above, we used the fact that $Z_1 \cap Z_2$ is cut out by the divisor $\Delta_{\bullet k}$ in $Z_1$). All terms beyond the underlined one do not contribute anything to the 0-th cohomology group, so it remains to prove that $H^0(Z_1 \cap Z_2, \CO)$ is one-dimensional. In order to do this, we will show that $Z_1 \cap Z_2$ is a reduced and connected projective variety (if we assume that $C$ itself is connected, which we do). To this end, note that $Z_1 \cap Z_2$ is the union as $\ell \in \{k+1,\dots,d\}$ of
\begin{equation}
\label{eqn:varieties}
\Delta_{\bullet \ell} \cap \Delta_{\bullet k} \cap (C^d \times_{C^{(d)}} C^d) = \Big\{x_k = x_{\ell} = x_\bullet\} \cong C^{d-1} \times_{C^{(d-1)}} C^{d-1}
\end{equation}
which is well-known to be reduced. As for connectedness, we observe that any point in \eqref{eqn:varieties} lies in the component of the small diagonal $x_1 = \dots = x_d = x_\bullet = x_1' = \dots = x_{d-1}'$, which is isomorphic to $C$ itself. 

Let us now compare the short exact sequence that produces \eqref{eqn:cone}, namely
\begin{equation}
\label{eqn:ses p}
0 \rightarrow P_{k} \rightarrow P_{k-1} \rightarrow \CO_{Z_2} (-\Delta_{\bullet 1}  - \dots - \Delta_{\bullet,k-1}) \rightarrow 0 
\end{equation}
with \eqref{eqn:ses z}. Having shown that there is (up to isomorphism) a single non-trivial extension of $\CO_{Z_2}$ by $\CO_{Z_1}(-\Delta_{\bullet k})$ would imply the induction step of \eqref{eqn:claim}, namely
$$
P_{k} \cong \CO_{Z_1}(-\Delta_{\bullet 1} - \dots -\Delta_{\bullet k}) \quad \Rightarrow \quad P_{k-1} \cong \CO_Z(-\Delta_{\bullet 1} - \dots -\Delta_{\bullet, k-1})
$$
as soon as we can show that the short exact sequence \eqref{eqn:ses p} is not split. In other words, it remains to show that $P_{k-1}$ cannot be isomorphic to
\begin{equation}
\label{eqn:cannot be}
\CO_{Z_1}(-\Delta_{\bullet 1} - \dots -\Delta_{\bullet k}) \bigoplus \CO_{Z_2} (-\Delta_{\bullet 1}  - \dots - \Delta_{\bullet,k-1})
\end{equation}
However, note that $P_{k-1}$ is $S_{d-k+1}$-invariant with respect to the permutation of the factors of $C$ corresponding to the right-most $d-k+1$ copies of $E$ (namely the copies which parameterize the points $x_k,\dots,x_d$) because of Proposition \ref{prop:quadratic relation e} for $i=j$. However, the coherent sheaf \eqref{eqn:cannot be} is clearly not invariant, hence it cannot be isomorphic to $P_{k-1}.$

\begin{proof} \emph{of Lemma \ref{lem:adjoint}:} Let us prove \eqref{eqn:lem adjoint}. As $\be_i = Rp_{+*} \left( \CL^i \otimes L(p_- \times p_C)^* \right)$, we have
\begin{equation}
\label{eqn:lem adjoint 1}
\text{Hom}_{\quot_d} \Big(\alpha, \be_i(\beta) \Big) = \text{Hom}_{\quot_d} \Big(\alpha, Rp_{+*} \left( \CL^i \otimes L(p_- \times p_C)^*(\beta) \right) \Big) \cong 
\end{equation}
$$
\text{Hom}_{\quot_{d-1,d}} \Big(Lp_{+}^*(\alpha), \CL^i \otimes L(p_- \times p_C)^*(\beta) \Big) = \text{Hom}_{\quot_{d-1,d}} \Big(\CL^{-i} \otimes Lp_{+}^*(\alpha), L(p_- \times p_C)^*(\beta) \Big)
$$
with the $\cong$ being the usual adjunction. However, because the morphism $p_- \times p_C$ is the projectivization of the rank $r$ vector bundle $\CE$ (so in particular it is smooth) then
$$
(p_- \times p_C)^! = \left( \frac {\CK_{\quot_{d-1,d}}}{\CK_{\quot_{d-1} \times C}} \otimes L(p_- \times p_C)^* \right)[r-1] = \left( \det \CE \otimes \CL^{-r} \otimes L(p_- \times p_C)^* \right) [r-1]
$$
with the second equality being \eqref{eqn:canonical proj}. Plugging this into \eqref{eqn:lem adjoint 1} gives us
\begin{align*}
\text{Hom}_{\quot_d} \Big(\alpha, \be_i(\beta) \Big) &\cong \text{Hom}_{\quot_{d-1,d}} \Big(\det \CE \otimes \CL^{-i-r} \otimes Lp_{+}^*(\alpha)[r-1], (p_- \times p_C)^!(\beta) \Big) \\
&\cong \text{Hom}_{\quot_{d-1} \times C} \Big(R(p_- \times p_C)_* \left( \det \CE \otimes \CL^{-i-r} \otimes Lp_{+}^*(\alpha) \right), \beta \Big) \\
&= \text{Hom}_{\quot_{d-1} \times C} \Big (\det \CE \otimes \tf_{-i-r}(\alpha)[r-1], \beta \Big)
\end{align*}
with the identification between the first two rows being the adjunction between $(p_- \times p_C)^!$ and $R(p_- \times p_C)_*$. This establishes \eqref{eqn:lem adjoint}. Formula \eqref{eqn:lem adjoint opp} is analogous and thus left as an exercise to the reader (it will also not be used in the present paper). \end{proof} \end{proof}

\section{The semi-orthogonal decomposition}
\label{sec:semi}

\vskip.2in

\subsection{Direct summands}
\label{sub:failure}

Proposition \ref{prop:semi} is close in spirit to a semi-orthogonal decomposition of the category $\DD_{\quot_d}$ itself, but its failure rests in the fact that the essential images of the functors $\be_{\bd}$ do not generate $\DD_{\quot_d}$. In turn, this is fundamentally caused by the fact that the essential images of the functors $R\rho_{\bd*}$ are not the entire $\DD_{C^{(\bd)}}$. To remedy this issue, we start from the following result, which holds for any composition $\bd$ of $d$.

\begin{lemma}
\label{lem:direct summand}

Any object in $\DD_{C^{(\bd)}}$ is a direct summand of an object in $\emph{Im }R\rho_{\bd*}$.

\end{lemma}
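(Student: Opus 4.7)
The map $\rho_{\bd} : C^d \to C^{(\bd)}$ is a finite morphism realizing $C^{(\bd)}$ as the quotient of the smooth variety $C^d$ by the finite group $S_{\bd} = S_{d_0} \times \dots \times S_{d_{r-1}}$, whose order is invertible in $\BC$. The plan is to show that for any $\CF \in \DD_{C^{(\bd)}}$, the object $\CF$ is a direct summand of $R\rho_{\bd*} L\rho_{\bd}^*(\CF)$, which by definition lies in $\emph{Im}\, R\rho_{\bd*}$.

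First I would invoke the projection formula (which applies since $\rho_{\bd}$ is proper) to identify
\[
R\rho_{\bd*} L\rho_{\bd}^*(\CF) \;\cong\; \CF \otimes^{L} R\rho_{\bd*}(\CO_{C^d}).
\]
Since $\rho_{\bd}$ is finite, $R\rho_{\bd*}(\CO_{C^d}) = \rho_{\bd*}(\CO_{C^d})$ is a coherent sheaf in degree zero, and in fact a locally free $\CO_{C^{(\bd)}}$-module equipped with the natural action of $S_{\bd}$. The next step is to exhibit $\CO_{C^{(\bd)}}$ as a direct summand of $\rho_{\bd*}(\CO_{C^d})$. The unit of adjunction $\CO_{C^{(\bd)}} \to \rho_{\bd*}(\CO_{C^d})$ realizes $\CO_{C^{(\bd)}}$ as the sheaf of $S_{\bd}$-invariants, and the averaging projector
\[
\rho_{\bd*}(\CO_{C^d}) \;\longrightarrow\; \CO_{C^{(\bd)}}, \qquad f \;\longmapsto\; \frac{1}{|S_{\bd}|}\sum_{g\in S_{\bd}} g\cdot f,
\]
which is well-defined because $|S_{\bd}|$ is invertible in $\BC$, splits this inclusion.

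Tensoring this splitting with $\CF$ produces a direct sum decomposition
\[
R\rho_{\bd*} L\rho_{\bd}^*(\CF) \;\cong\; \CF \;\oplus\; \CF \otimes^{L} \CG,
\]
where $\CG$ is the complement of $\CO_{C^{(\bd)}}$ inside $\rho_{\bd*}(\CO_{C^d})$. In particular $\CF$ is a direct summand of an object in $\emph{Im}\, R\rho_{\bd*}$. The only subtlety is the invertibility of $|S_{\bd}|$ that underlies the averaging projector; this is automatic over $\BC$, so no real obstacle arises.
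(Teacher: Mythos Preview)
Your proof is correct and follows essentially the same approach as the paper: both apply the projection formula to reduce to showing that $\CO_{C^{(\bd)}}$ is a direct summand of $\rho_{\bd*}(\CO_{C^d})$, and then use the $S_{\bd}$-action to produce the splitting. The only minor difference is that the paper verifies the invariants $(\rho_{\bd*}\CO_{C^d})^{S_{\bd}} \cong \CO_{C^{(\bd)}}$ by an \'etale-local reduction to the fundamental theorem of symmetric functions, whereas you invoke the averaging projector directly; both justifications are standard and yield the same conclusion.
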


\begin{proof} We will prove the stronger fact that there is an action of
\begin{equation}
\label{eqn:invariant functor}
S_{\bd} \text{ on the functor } R\rho_{\bd*} \circ L \rho_{\bd}^*, \text{ whose invariant functor is } \text{Id}_{C^{(\bd)}}.
\end{equation}
To do this, we invoke the projection formula $R\rho_{\bd*} \circ L \rho_{\bd}^* (\gamma) \cong R\rho_{\bd*}(\CO_{C^d}) \otimes \gamma$. It therefore suffices to show that the action $S_{\bd} \curvearrowright C^d$ from \eqref{eqn:group} induces an action of 
\begin{equation}
\label{eqn:invariant claim}
S_{\bd} \curvearrowright R\rho_{\bd*}(\CO_{C^d}), \text{ whose invariant object is } \CO_{C^{(\bd)}}.
\end{equation}
This statement is \'etale local on the curve $C$, so we may assume $C = \BA^1$. In this case, both $C^d$ and $C^{\bd}$ are isomorphic to $\BA^d$, and $\rho_{\bd}$ is given by the formula
$$
\rho_{\bd} (x_1,\dots,x_{d_{0}}, \dots, y_1,\dots,y_{d_{r-1}}) = 
$$
$$
= (e_1(x_1,\dots,x_{d_{0}}), \dots, e_{d_{0}}(x_1,\dots,x_{d_{0}}), \dots, e_1(y_1,\dots,y_{d_{r-1}}), \dots, e_{d_{r-1}}(y_1,\dots,y_{d_{r-1}}))
$$
where $e_k$ denotes the $k$-th elementary symmetric function. The action of $S_{\bd}$ on the domain of $\rho_{\bd}$ permutes the $x$,$y$,$\dots$ variables independently of each other. With this in mind, \eqref{eqn:invariant claim} is simply equivalent to the fundamental theorem of symmetric functions.

\end{proof}

\subsection{The nested Quot scheme associated with a composition}

For any composition $\bd = (d_0, d_1, \dots, d_{r-1})$ of $d$, we let $\quot_{\bd}$ denote the nested Quot scheme
\begin{equation}
\label{eqn:nested quot scheme}
\quot_{\bd}  = \quot_{d_{r-1},d_{r-1}+d_{r-2},\dots,d_1+\dots+d_{r-1},d_0+\dots+d_{r-1}} 
\end{equation}
where the RHS is defined in \eqref{eqn:def nested general}. The support map \eqref{eqn:support} takes the form
\begin{equation}
\label{eqn:support composition}
s_{\bd} : \quot_{\bd} \rightarrow C^{(\bd)},
\end{equation}
and we also set
\begin{equation}
\label{eqn:support composition0}
s_{(d)} : \quot_{d} \rightarrow C^{(d)},
\end{equation}
for the special case $\bd = (d, 0, \dots, 0)$.
We will further write $s_d : \quot_{1,\dots,d} \rightarrow C^d$ for the map which records the support points of a full flag, so that the following diagram commutes
\begin{equation}
\label{eqn:cartesian}
\xymatrix{& \quot_{1,\dots,d} \ar[ld]_{s_d} \ar[rd]^{p_{\bd}} \\ C^{d} \ar[dr]_{\rho_{\bd}} & & \quot_{\bd} \ar[ld]^{s_{\bd}} \\ & C^{(\bd)} &}
\end{equation}
where $p_{\bd}$ forgets some of the constituent sheaves in a full flag, in accordance with the indices that appear in \eqref{eqn:nested quot scheme}.

\begin{proposition}
\label{prop:cartesian}

For any composition $\bd$ of $d$, we have
\begin{equation}
\label{eqn:zeta push}
R\zeta_*(\CO_{\quot_{1,\dots,d}}) = \CO_{C^d \times_{C^{(\bd)}} \quot_{\bd}} 
\end{equation}
where 
\begin{equation}
\label{eqn:zeta def}
\zeta : \quot_{1,\dots,d}  \rightarrow C^d \times_{C^{(\bd)}} \quot_{\bd}
\end{equation}
is induced by diagram \eqref{eqn:cartesian}.

\end{proposition}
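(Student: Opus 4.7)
My plan is to factor $\zeta$ as a tower of honest projective bundles over the fiber product $X := C^d \times_{C^{(\bd)}} \quot_{\bd}$, and then apply the pushforward formula of Lemma~\ref{lem:push d} at $k = 0$ at each stage. Both $\quot_{1, \dots, d}$ and $X$ have dimension $rd$, and $\zeta$ is proper. On the open dense locus $U \subset X$ where the $d$ support points of the $C^d$-factor are pairwise distinct, $\zeta$ restricts to an isomorphism: iterating Lemma~\ref{lem:pi minus}, the length-1 subsheaf refinement at each distinct prescribed support point is uniquely determined, giving a unique full-flag refinement of the $\bd$-flag. Thus $\zeta$ is a proper birational morphism.

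Next, I would construct intermediate moduli spaces $\quot_{1, \dots, d} = X_N \to X_{N-1} \to \dots \to X_1 \to X_0 = X$, where $N = d - r'$ is the number of sheaves missing from the $\bd$-flag (here $r'$ is the number of nonzero entries in $\bd$). Each $X_k$ parameterizes a $\bd$-flag augmented by $k$ additional intermediate sheaves, each equipped with a prescribed support point read off from the $C^d$-data of $X$; the map $X_k \to X_{k-1}$ forgets the $k$-th inserted sheaf while retaining its support point. The main claim is that each $X_k \to X_{k-1}$ is an honest $\BP^{r-1}$-bundle: inserting the $k$-th new intermediate sheaf at a prescribed support point amounts to choosing a length-1 subsheaf at that point of a rank-$r$ universal vector bundle coming from the ambient $\quot$-data, which gives a $\BP^{r-1}$-projectivization via Lemma~\ref{lem:pi minus} applied relatively.

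The hard part is verifying the honest $\BP^{r-1}$-bundle structure at each stage of the tower. As small examples illustrate (e.g.\ $r = 3$, $d = 2$, $\bd = (2, 0, 0)$), the naive reduced fibers of $\zeta$ over the diagonal loci in $C^d$ can be strictly smaller than $\BP^{r-1}$; however, $X$ itself has non-reduced scheme structure along the same diagonals (inherited from the ramification of $\rho_{\bd}: C^d \to C^{(\bd)}$), and the scheme-theoretic fibers---correctly accounting for this non-reduced structure---ought to equal the full $\BP^{r-1}$. Making this precise requires a careful local analysis, perhaps exploiting the locally complete intersection embedding $X \hookrightarrow C^d \times \quot_{\bd}$ of codimension $d$ (cut out by the $d$ equations identifying the two maps to $C^{(\bd)}$). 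Once the tower of honest projective bundles is established, iterating Lemma~\ref{lem:push d} at $k = 0$ yields $R(X_k \to X_{k-1})_* \CO_{X_k} = \CO_{X_{k-1}}$ at each stage, and hence $R\zeta_* \CO_{\quot_{1, \dots, d}} = \CO_X$ by composition. As a fallback strategy, one could try to exploit that $\pi_1: X \to C^d$ is finite flat (the pullback of the finite flat quotient $\rho_{\bd}$) combined with the easier identity $R(s_d)_* \CO_{\quot_{1, \dots, d}} = \CO_{C^d}$ (from iterated Lemma~\ref{lem:pi minus}), but extracting $R\zeta_* \CO = \CO_X$ from the resulting identity $R\pi_{1*} R\zeta_* \CO = \CO_{C^d}$ requires an additional descent argument that is not automatic.
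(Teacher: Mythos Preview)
Your proposed tower of $\BP^{r-1}$-bundles cannot exist, for a dimension reason you have already computed but not followed through. Both $\quot_{1,\dots,d}$ and $X = C^d \times_{C^{(\bd)}} \quot_{\bd}$ have dimension $rd$ (the latter because $\rho_{\bd}$ is finite, so the base change $X \to \quot_{\bd}$ is finite). A tower of $N \geq 1$ honest $\BP^{r-1}$-bundles has relative dimension $N(r-1) > 0$ when $r \geq 2$, contradicting $\dim \quot_{1,\dots,d} = \dim X$. In fact each of your intermediate maps $X_k \to X_{k-1}$ is birational, not a projective bundle: at a generic point where the prescribed support points are distinct, the intermediate sheaf you are inserting is uniquely determined, so the generic fiber is a point, not $\BP^{r-1}$. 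The mistake in your justification is that inserting an intermediate sheaf $E^{(a)}$ between $E^{(a+1)} \subset E^{(a-1)}$ at a \emph{prescribed} point $x$ is governed by Lemma~\ref{lem:pi minus general}, which projectivizes the \emph{torsion} sheaf $\CE^{(a-1)}/\CE^{(a+1)}$, not by Lemma~\ref{lem:pi minus}, which projectivizes a locally free sheaf; the fiber of this torsion sheaf at $x$ has rank anywhere between $1$ and $r$ depending on the local structure, and the non-reduced structure on $X$ does not compensate for this.

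The paper's argument is entirely different and relies on a nontrivial input from geometric representation theory. One first reduces \'etale-locally to $C = \BA^1$ and $\bd = (d,0,\dots,0)$. In the local model \eqref{eqn:local quot}, the passage from $\quot_d^{\BA^1}$ to $\quot_{1,\dots,d}^{\BA^1}$ is (after stripping off the cyclic vectors and the $GL_d$-quotient) exactly the Grothendieck--Springer map $\tfg \to \fg$ for $\fg = \mathfrak{gl}_d$. The key fact, quoted from \cite{gaitsgory}, is that the induced map $\eta : \tfg \to \fg \times_{\fg/\!\!/GL_d} \mathfrak{h} = \fg \times_{(\BA^1)^{(d)}} (\BA^1)^d$ is a resolution of rational singularities, i.e.\ $R\eta_*(\CO_{\tfg}) = \CO$. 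One then reinserts the cyclic vectors, passes to the $GL_d$-quotient, and restricts to the open stable locus to obtain \eqref{eqn:zeta push}. So the content of the proposition is essentially the rationality of singularities of the universal eigenvalue variety for $\mathfrak{gl}_d$, which is not something one can see by a projective-bundle argument.
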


As a consequence of \eqref{eqn:zeta push}, we have the following base change equivalences
\begin{equation}
\label{eqn:equiv cartesian}
Rp_{\bd*} \circ Ls_d^* \cong Ls_{\bd}^* \circ R\rho_{\bd*}
\end{equation}
\begin{equation}
\label{eqn:equiv cartesian other way}
Rs_{d*} \circ Lp_{\bd}^* \cong L\rho_{\bd}^* \circ Rs_{\bd*}
\end{equation}
for any composition $\bd$ of $d$. The proof of Proposition \ref{prop:cartesian} will be given in the Appendix, as it relies on a local (i.e. $C = \BA^1$) computation.

\subsection{The main functors}
\label{sub:mainfunctors}

For any composition $\bd = (d_0, d_1, \dots, d_{r-1})$ of $d$, the nested Quot scheme \eqref{eqn:nested quot scheme} carries the line bundle
\begin{equation}
\label{eqn:line bundle composition}
\CL_{\bd} = \bigotimes_{i=1}^r \CL_i^{i-1}
\end{equation}
with the line bundles $\CL_1,\dots,\CL_r$ as in \eqref{eqn:line bundles}. Let
$$
t_{\bd} : \quot_{\bd} \to \quot_d
$$
denote the projection map which only remembers the deepest sheaf $E^{(d)}$ in a flag. We now introduce the functors which are the building blocks of the derived category $\DD_{\quot_d}$.

\begin{definition}
\label{def: mainfunctors} 

For any composition $\bd$ of $d$, we let 
$$
\be_{\bd}^{\ered}: \DD_{C^{(\bd)}} \to \DD_{\quot_d}
$$ 
denote the composition
\begin{equation}
\label{eqn:the functors}
\be_{\bd}^{\ered} : \DD_{C^{(\bd)}} \xrightarrow{Ls_{\bd}^*} \DD_{\quot_{\bd}} \xrightarrow{\otimes \CL_{\bd}} \DD_{\quot_{\bd}} \xrightarrow{Rt_{\bd*}} \DD_{\quot_d}.
\end{equation}
Equivalently, $\be_{\bd}^{\ered}$ is determined by the correspondence
\begin{equation}
\label{eqn:correspondence red}
\mathbb L_{\bd} = R (t_{\bd} \times s_{\bd})_* \left(\CL_{\bd} \right) \in \emph{Ob}(\DD_{\quot_d \times C^{(\bd)}}).
\end{equation}

\end{definition}

\subsection{Examples: tautological bundles}
\label{sub:ex taut}

The most basic case of the functors \eqref{eqn:the functors} occurs for the composition $(d) = (d,0,\dots,0)$, in which case
\begin{equation}
\label{eqn:example d}
\be_{(d)}^{\red} (\gamma) = Ls_{(d)}^*(\gamma)
\end{equation}
for all $\gamma \in \text{Ob}(\DD_{C^{(d)}})$. Some more involved examples can be provided using tautological bundles. To define them, recall the universal short exact sequence
$$
0 \rightarrow \CE \rightarrow \rho^*(V) \rightarrow \CF \rightarrow 0
$$
on $\quot_d \times C$, where $\pi,\rho : \quot_d \times C \rightarrow \quot_d, C$ are the standard projections. 

\begin{definition}
\label{def:tautological bundle early}

For any line bundle $M$ on $C$, we let
\begin{equation}
\label{eqn:tautological bundle early}
M^{[d]} = \pi_* \left( \CF \otimes \rho^*(M) \right)
\end{equation}
and call it a tautological bundle on $\quot_d$.

\end{definition}

\noindent In Proposition \ref{prop:tautological bundle}, we will prove that for any line bundle $M \to C$ and any $\ell$, we have
\begin{equation}
\label{eqn:wedge to e statement}
\wedge^\ell M^{[d]} = \be_{(d-\ell,\ell,0,\dots,0)}^{\red}  \left(\CO^{(d-\ell)} \boxtimes M^{(\ell)} \right)
\end{equation}
where $M^{(\ell)}$ denotes the line bundle on $C^{(\ell)}$ obtained by descending $M^\ell$ on $C^\ell$, 
\begin{equation}
\label{eqn:sym line bundle}
\rho_{(\ell)}^*(M^{(\ell)}) = M^\ell.
\end{equation}
As an illustration of \eqref{eqn:wedge to e statement}, let us prove its particular case $\ell = 1$ which holds in fact for vector bundles $M$ of arbitrary rank. By the definition of $\be^{\red}_{(d-1,1,0,\dots,0)}$, consider the nested Quot scheme
$$
\quot_{1, d} = \{E^{(d)} \subset E^{(1)} \subset V\}
$$
with the quotients $V/E^{(d)}$ and $V/E^{(1)}$ of lengths $d$ and $1$ respectively. We will write $p_C: \quot_{1, d} \to C$ for the morphism which remembers the support point of $V/E^{(1)}$. With this in mind, Lemma \ref{lem:pi minus general} implies that the map 
$$
t_{(d-1,1,0,\dots,0)} \times p_C: \quot_{1, d} \to \quot_d \times C
$$ 
is the projectivization of the rank 0 universal quotient sheaf $\CF^{(d)} = V/\CE^{(d)}$. Therefore
$$ 
\be_{(d-1,1,0,\dots,0)}^{\red}  \left(\CO^{(d-1)} \boxtimes M \right) = R\pi_* \left[ R(t_{(d-1,1,0,\dots,0)} \times p_C)_*(\mathcal L_2 \otimes p_C^*(M)) \right]
$$
where $\pi : \quot_d \times C \rightarrow \quot_d$ denotes the standard projection. The line bundle $\CL_2$ (in the notation \eqref{eqn:line bundles}) corresponds to $\CO(1)$ on the projectivization, so we may invoke Lemma \ref{lem:push d virtual} to calculate the interior of the square bracket above
\begin{equation}
 \label{eqn:taut0}
 \be_{(d-1,1,0,\dots,0)}^{\red}  \left(\CO^{(d-1)} \boxtimes M \right) = R\pi_* \left( \CF^{(d)} \otimes \rho^*(M) \right) = M^{[d]},
\end{equation}
exactly as prescribed by the $\ell=1$ case of \eqref{eqn:wedge to e statement}.

\subsection{Invariants}

For any composition $\bd$ of $d$, the functor $\be_{\bd}^{\red}$ is related to $\be_{\bd}: \DD_{C^d} \to \DD_{\quot_d}$ of \eqref{eqn:basis} in a simple manner which we now describe. Consider the commutative diagram
\begin{equation}
\label{eqn:consider the maps}
\xymatrix{\quot_{1,\dots,d} \ar@/_2pc/[rr]_{t_d} \ar[r]^-{p_{\bd}} & \quot_{\bd} \ar[r]^-{t_{\bd}} & \quot_d}
\end{equation}
where $t_d, t_{\bd}$ only remember the sheaf $E^{(d)}$ in a flag. We note that 
$$
p_{\bd}^*(\mathcal L_{\bd}) = {\tilde {\mathcal L}_{\bd}} := (\CL_1 \dots \CL_{d_0})^{0} \otimes (\CL_{d_0+1} \dots \CL_{d_0+d_1})^{1} \otimes \dots \otimes (\CL_{d_0+\dots+d_{r-2}+1} \dots \CL_d)^{r-1}
$$
on $\quot_{1,\dots,d}$, where $\CL_1,\dots,\CL_d$ are the line bundles from \eqref{eqn:line bundles}, associated with successive quotients on the fully nested Quot scheme $\quot_{1, \ldots, d}$. By definition, we have
$$
\be_{\bd} (-) = Rt_{d*} ({\tilde \CL}_{\bd} \otimes Ls_d^*(-))
$$
Equivalently, $\be_{\bd}$ is the functor determined by the correspondence
\begin{equation}
\label{eqn:correspondence non red}
{\tilde {\mathbb L}_{\bd}} = R(t_d \times s_d)_* ( {\tilde {\mathcal L}_{\bd}} ) \in \text{Ob}(\DD_{\quot_d \times C^{d}})
\end{equation}
(compare with \eqref{eqn:correspondence red}). As ${\tilde {\CL}}_{\bd} = p_{\bd}^*( \CL_{\bd})$, we invoke $t_d = t_{\bd} \circ p_{\bd}$ of \eqref{eqn:consider the maps} to write instead
\begin{equation}
\label{eqn:e to e red}
\be_{\bd} = Rt_{\bd*} \left( \CL_{\bd} \otimes Rp_{\bd*}  \circ Ls_d^* \right) \stackrel{\eqref{eqn:equiv cartesian}}= Rt_{\bd*} \left( \CL_{\bd} \otimes Ls_{\bd}^*\circ R\rho_{\bd*} \right) = \be_{\bd}^{\red} \circ R\rho_{\bd*}
\end{equation}

\begin{lemma}
\label{lem:invariants}

There is an action 
$$
S_{\bd} \curvearrowright \be_{\bd} \circ L\rho_{\bd}^*(\gamma) \in \emph{Ob}(\DD_{\quot_d})
$$
which is natural in $\gamma \in \emph{Ob}(\DD_{C^{(\bd)}})$, such that
\begin{equation}
\label{eqn:invariants}
\be_{\bd}^{\ered} (\gamma) = \Big( \be_{\bd} \circ L\rho_{\bd}^*(\gamma) \Big)^{S_{\bd}}
\end{equation}

\end{lemma}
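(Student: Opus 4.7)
The plan is to build the action from the canonical $S_{\bd}$-action on $R\rho_{\bd*}\circ L\rho_{\bd}^*$ that was essentially produced inside the proof of Lemma \ref{lem:direct summand}, and then transport it through the functor $\be_{\bd}^{\red}$.

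First, I recall the key identity \eqref{eqn:e to e red}, namely
\[
\be_{\bd}\;=\;\be_{\bd}^{\red}\circ R\rho_{\bd*}.
\]
Composing on the right with $L\rho_{\bd}^{*}$ gives the factorization
\[
\be_{\bd}\circ L\rho_{\bd}^{*}\;=\;\be_{\bd}^{\red}\circ \bigl(R\rho_{\bd*}\circ L\rho_{\bd}^{*}\bigr).
\]
Now \eqref{eqn:invariant functor} inside the proof of Lemma \ref{lem:direct summand} furnishes a natural $S_{\bd}$-action on the composite functor $R\rho_{\bd*}\circ L\rho_{\bd}^{*}:\DD_{C^{(\bd)}}\to\DD_{C^{(\bd)}}$, arising from the permutation action of $S_{\bd}$ on $C^{d}$ via the chain $R\rho_{\bd*}\circ L\rho_{\bd}^{*}(\gamma)\cong R\rho_{\bd*}(\CO_{C^{d}})\otimes\gamma$. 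Lemma \ref{lem:direct summand} moreover identifies the $S_{\bd}$-invariants of this functor with $\mathrm{Id}_{\DD_{C^{(\bd)}}}$.

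Next I would apply the additive functor $\be_{\bd}^{\red}$ to this action. Since functors preserve natural transformations, we obtain a natural $S_{\bd}$-action on the functor $\be_{\bd}^{\red}\circ(R\rho_{\bd*}\circ L\rho_{\bd}^{*})=\be_{\bd}\circ L\rho_{\bd}^{*}$, establishing the first claim of the lemma. To identify the invariants, note that we are working $\BC$-linearly, so $S_{\bd}$-invariants of any object $X$ equipped with an $S_{\bd}$-action can be computed as the image of the idempotent $e=\tfrac{1}{|S_{\bd}|}\sum_{g\in S_{\bd}}g\in\mathrm{End}(X)$. Since $\be_{\bd}^{\red}$ is additive and $\BC$-linear, it sends this idempotent to the analogous idempotent on $\be_{\bd}^{\red}(X)$ and therefore commutes with the operation of taking $S_{\bd}$-invariants. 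Applying this to $X=R\rho_{\bd*}\circ L\rho_{\bd}^{*}(\gamma)$ and using \eqref{eqn:invariant claim} yields
\[
\bigl(\be_{\bd}\circ L\rho_{\bd}^{*}(\gamma)\bigr)^{S_{\bd}}
\;\cong\;\be_{\bd}^{\red}\bigl((R\rho_{\bd*}\circ L\rho_{\bd}^{*}(\gamma))^{S_{\bd}}\bigr)
\;\cong\;\be_{\bd}^{\red}(\gamma),
\]
which is the desired formula \eqref{eqn:invariants}.

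The only subtle point is the usage of the idempotent description of invariants: one must know that in a $\BC$-linear Karoubian triangulated category (such as the bounded derived category of coherent sheaves on a smooth projective variety) the splitting of the idempotent $e$ exists and is natural, so that the invariants functor commutes with any $\BC$-linear additive functor. This is standard, and $\DD_{\quot_d}$ is idempotent complete since it has a DG-enhancement, so no real obstacle arises. The naturality in $\gamma$ is immediate because every step of the argument—the definition of the action on $R\rho_{\bd*}\circ L\rho_{\bd}^{*}$, the application of $\be_{\bd}^{\red}$, and the extraction of invariants via the idempotent $e$—is functorial in $\gamma$.
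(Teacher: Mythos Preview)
Your proof is correct and follows essentially the same logic as the paper's, but packaged differently. The paper works at the level of Fourier--Mukai kernels: it computes the correspondence underlying $\be_{\bd}\circ L\rho_{\bd}^{*}$ explicitly as $R(t_{\bd}\times s_{\bd})_{*}\bigl(\CL_{\bd}\otimes Rp_{\bd*}(\CO_{\quot_{1,\dots,d}})\bigr)$, then invokes the base-change identity \eqref{eqn:equiv cartesian} to rewrite $Rp_{\bd*}(\CO_{\quot_{1,\dots,d}})$ as $Ls_{\bd}^{*}\bigl(R\rho_{\bd*}(\CO_{C^{d}})\bigr)$, on which the $S_{\bd}$-action and its invariants are already known from \eqref{eqn:invariant claim}. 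You instead stay at the level of functors, using the factorization $\be_{\bd}\circ L\rho_{\bd}^{*}=\be_{\bd}^{\red}\circ(R\rho_{\bd*}\circ L\rho_{\bd}^{*})$ directly and the fact that a $\BC$-linear additive functor preserves the averaging idempotent. Both arguments rest on the same ingredient (the action described in \eqref{eqn:invariant functor}--\eqref{eqn:invariant claim}); yours is a bit more streamlined, while the paper's makes the geometric origin of the action more visible by locating it on an explicit sheaf on $\quot_{\bd}$.
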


\begin{proof} Although we phrased the statement above in terms of objects, it is actually a statement about correspondences. Specifically, \eqref{eqn:correspondence non red} implies that the functor $\be_{\bd} \circ L\rho_{\bd}^*$ is given by the correspondence
$$
R(\text{Id}_{\quot_d} \times \rho_{\bd})_* ( {\tilde {\mathbb L}_{\bd}} ) = R(t_d 
 \times \rho_{\bd} \circ s_{d})_* ( \tCL_{\bd} ) = 
$$
\begin{equation}
\label{eqn:corr}
= R(t_{\bd}  \times s_{\bd})_* \left( Rp_{\bd*} (\tCL_{\bd})  \right) = R(t_{\bd}  \times s_{\bd})_* \left( \CL_{\bd} \otimes Rp_{\bd*} (\CO_{\quot_{1,\dots,d}}) \right)
\end{equation}
The domain and target of the morphisms $s_{\bd}$, $t_{\bd}$ are fixed by $S_{\bd}$, as is the line bundle $\CL_{\bd}$. However, due to \eqref{eqn:equiv cartesian}, we observe that
\begin{equation}
\label{eqn:an equality}
Rp_{\bd*} (\CO_{\quot_{1,\dots,d}}) = L s_{\bd}^* \Big( R\rho_{\bd*}(\CO_{C^d}) \Big)
\end{equation}
Because the map $\rho_{\bd}$ is a $S_{\bd}$ quotient, the coherent sheaf $R\rho_{\bd*}(\CO_{C^d})$ has an action of $S_{\bd}$, whose invariants we have already seen are simply $\CO_{C^{(\bd)}}$. Therefore, the right-hand side of \eqref{eqn:corr} has an action of $S_{\bd}$, whose invariants are $R(t_{\bd}  \times s_{\bd})_* ( \CL_{\bd})$. By \eqref{eqn:correspondence red}, the latter object is the correspondence which produces the functor $\be_{\bd}^{\red}$, so we are done.

\end{proof}

\subsection{The main theorem}

We are now ready to upgrade Proposition \ref{prop:semi} to our Theorem \ref{thm:intro semi}, which states that the functors $\be_{\bd}^{\red}$ are fully faithful and semi-orthogonal in the sense that
\begin{equation}
\label{eqn:fully faithful}
\text{Hom}_{\quot_d} \Big ( \be^{\red}_{\bd}  (\gamma), \be^{\red}_{\bd'}  (\gamma') \Big) \cong \begin{cases} 0&\text{if } \bd < \bd' \text{ lexicographically}\\ \text{Hom}_{C^{(\bd)}} (\gamma, \gamma' ) &\text{if }\bd = \bd' \end{cases}
\end{equation}
and moreover their essential images generate $\DD_{\quot_d}$ as a triangulated category.

\begin{proof} \emph{of Theorem \ref{thm:intro semi}:} As a consequence of \eqref{eqn:invariants}, $\be_{\bd}^{\red}$ is a direct summand of $\be_{\bd} \circ L\rho_{\bd}^*$, so the $\bd < \bd'$ statement of \eqref{eqn:fully faithful} follows from \eqref{eqn:semi}. Let us now prove the $\bd = \bd'$ statement of \eqref{eqn:fully faithful}. By analogy with Lemma \ref{lem:adjoint}, the functor $\be^{\red}_{\bd}$ has the left adjoint
\begin{equation}
\label{eqn:the functors 2}
 \tf_{\bd}^{'\red} : \DD_{\quot_d} \xrightarrow{Lt_{\bd}^*} \DD_{\quot_{\bd}} \xrightarrow{\omega_{s_{\bd}} \otimes \CL_{\bd}^{-1}}  \DD_{\quot_{\bd}} \xrightarrow{Rs_{\bd*}} \DD_{C^{(\bd)}}
\end{equation}
where $\omega_{s_{\bd}}$ denotes the relative canonical line bundle of the morphism $s_{\bd}$, shifted by $d(r-1)$ in the derived category (it arises in the formula above as the difference between the functors $s_{\bd}^!$ and $Ls_{\bd}^*$). We have the following analogue of \eqref{eqn:e to e red}
\begin{equation}
\label{eqn:f to f red}
\tf_{\bd}^{'} = Rs_{d*} (\omega_{s_d} \otimes \tCL_{\bd}^{-1} \otimes Lt_{d}^* ) = L \rho_{\bd}^* \circ Rs_{\bd*} \left(\omega_{s_{\bd}} \otimes \CL_{\bd}^{-1} \otimes Lt_{\bd}^* \right)  =  L \rho_{\bd}^* \circ \tf_{\bd}^{'\red} 
\end{equation}
where we write $\tf'_{\bd}$ for the same product of $\tf'_{-i-r}$ as $\be_{\bd}$ is a product of $\be_i$'s, but in opposite order. The middle equality above is a consequence of \eqref{eqn:equiv cartesian other way} and the identity
\begin{equation}
\label{eqn:push canonical}
R\zeta_*(\omega_{s_d}) = {\tilde \rho_{\bd}}^{\,*}(\omega_{s_{\bd}}),
\end{equation}
where ${\tilde \rho_{\bd}}: \quot_{\bd} \times_{C^{(\bd)}} C^d \to \quot_{\bd}$ is the base change of $\rho_{\bd}: C^d \to C^{(\bd)}.$
In turn, \eqref{eqn:push canonical} is due to the fact that the map $\zeta$ of \eqref{eqn:zeta def} is a resolution of rational singularities, see \eqref{eqn:zeta push}. Just like the functors $\be_{\bd}^{\red}$ and $\be_{\bd}$ are given by the correspondences \eqref{eqn:correspondence red} and \eqref{eqn:correspondence non red} respectively, the functors $\tf_{\bd}^{'\red}$ and $\tf_{\bd}^{'}$ are given by the following correspondences
\begin{align}
&{\mathbb L}_{\bd}^{\text{adj}} = R (t_{\bd} \times s_{\bd})_* \left(\omega_{s_{\bd}} \otimes \CL_{\bd}^{-1}  \right) \in \text{Ob}(\DD_{\quot_d \times C^{(\bd)}}) \label{eqn:correspondence red f} \\
&{\tilde {\mathbb L}_{\bd}}^{\text{adj}} = R (t_{d} \times s_{d})_* \left(\omega_{s_d}\otimes {\tilde \CL_{\bd}}^{-1} \right) \in \text{Ob}(\DD_{\quot_d \times C^{d}}) \label{eqn:correspondence non red f}
\end{align}
respectively. We have the following isomorphism in the derived category of $C^d \times C^d$
\begin{equation}
\label{eqn:iso correspondences}
\Phi : L(\rho_{\bd} \times \rho_{\bd})^* \left (\mathbb L_{\bd}^{\text{adj}} \circ \mathbb L_{\bd} \right ) \cong {\tilde {\mathbb L}_{\bd}}^{\text{adj}} \circ {\tilde {\mathbb L}_{\bd}} \stackrel{\eqref{eqn:long composition}}\cong \CO_{C^d \times_{C^{(\bd)}} C^d} = L(\rho_{\bd} \times \rho_{\bd})^* \left (\CO_{\Delta} \right)
\end{equation}
where the left-most isomorphism follows by translating \eqref{eqn:e to e red} and \eqref{eqn:f to f red} into the language of correspondences. We claim that $\Phi$ above can be upgraded to an isomorphism of $S_{\bd} \times S_{\bd}$ equivariant objects. Indeed, the left and right-hand sides of this equation are manifestly $S_{\bd} \times S_{\bd}$ equivariant sheaves. Thus, the failure of the isomorphism $\Phi$ to commute with the $S_{\bd} \times S_{\bd}$ action can be measured by an invertible degree 0 endomorphism of the object $\CO_{C^d \times_{C^{(\bd)}} C^d}$. All such endomorphisms are given by multiplication with a non-zero scalar, and this scalar must be 1 because $\Phi$ is compatible with the non-zero counit map $\mathbb L_{\bd}^{\text{adj}} \circ \mathbb L_{\bd} \rightarrow \CO_{\Delta}$. Thus, we conclude that $\Phi$ is an isomorphism in the $S_{\bd} \times S_{\bd}$ equivariant derived category of $C^d \times C^d$, so it descends to an isomorphism 
$$
\mathbb L_{\bd}^{\text{adj}} \circ \mathbb L_{\bd} \cong \CO_{\Delta}
$$
in the derived category of $C^{(\bd)} \times C^{(\bd)}$. This is precisely equivalent to the fully faithfulness of $\be_{\bd}^{\red}$, i.e. \eqref{eqn:fully faithful} for $\bd = \bd'$. To complete the proof of Theorem \ref{thm:intro semi}, it remains to prove that the essential images of the functors $\be_{\bd}^{\red}$ generate the category $\DD_{\quot_d}$.

\begin{lemma}
\label{lem:vice versa}

For any composition $\bd$, we have an equality of essential images
\begin{equation}
\label{eqn:direct summands}
\emph{Im }\be^{\ered}_{\bd}  = \Big(\text{direct summands of objects in }  \emph{Im }\be_{\bd} \Big)
\end{equation}
    
\end{lemma}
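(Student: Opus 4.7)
I would prove the two inclusions in \eqref{eqn:direct summands} separately, each via a short formal argument that leverages earlier results. The forward inclusion uses the invariants presentation of Lemma \ref{lem:invariants}, while the reverse inclusion uses the factorization \eqref{eqn:e to e red} together with the full faithfulness of $\be^{\red}_{\bd}$ (just established in the preceding part of the proof of Theorem \ref{thm:intro semi}).

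For the inclusion $\subseteq$, I would take any object of the form $\be^{\red}_{\bd}(\gamma)$ with $\gamma \in \text{Ob}(\DD_{C^{(\bd)}})$. By Lemma \ref{lem:invariants}, it is identified with the $S_{\bd}$-invariants of $\be_{\bd}(L\rho_{\bd}^*(\gamma))$, which is visibly an object of $\text{Im }\be_{\bd}$. Since we work in characteristic zero, the averaging projector $\frac{1}{|S_{\bd}|}\sum_{\sigma \in S_{\bd}} \sigma$ is an idempotent endomorphism of $\be_{\bd}(L\rho_{\bd}^*(\gamma))$ whose image is precisely $\be^{\red}_{\bd}(\gamma)$; hence $\be^{\red}_{\bd}(\gamma)$ is a direct summand of an object of $\text{Im }\be_{\bd}$, as required.

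For the inclusion $\supseteq$, I would start from a direct summand $X$ of some $\be_{\bd}(\delta)$ with $\delta \in \text{Ob}(\DD_{C^d})$. By \eqref{eqn:e to e red}, we may rewrite $\be_{\bd}(\delta) = \be^{\red}_{\bd}(R\rho_{\bd*}(\delta))$, so $X$ is a direct summand of an object in $\text{Im }\be^{\red}_{\bd}$. Because $\be^{\red}_{\bd}$ is fully faithful, the idempotent $e : \be^{\red}_{\bd}(R\rho_{\bd*}(\delta)) \to \be^{\red}_{\bd}(R\rho_{\bd*}(\delta))$ cutting out $X$ lifts uniquely to an idempotent $\tilde e$ on $R\rho_{\bd*}(\delta) \in \text{Ob}(\DD_{C^{(\bd)}})$. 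Since $C^{(\bd)}$ is smooth and projective, $\DD_{C^{(\bd)}}$ is idempotent complete (it is a bounded derived category of coherent sheaves on a smooth proper variety, hence Karoubian), so $\tilde e$ splits: $R\rho_{\bd*}(\delta) \cong \gamma_1 \oplus \gamma_2$ with $\tilde e$ the projector onto $\gamma_1$. Applying $\be^{\red}_{\bd}$ and using full faithfulness, we get $X \cong \be^{\red}_{\bd}(\gamma_1) \in \text{Im }\be^{\red}_{\bd}$.

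The only step requiring care is the idempotent-splitting argument in the reverse inclusion, since in principle fully faithful functors between triangulated categories need not have essential images closed under direct summands without a Karoubian hypothesis on the source. This hypothesis is automatic here, so I expect no serious obstacle; the whole lemma is essentially a repackaging of Lemma \ref{lem:invariants} and \eqref{eqn:e to e red} in light of full faithfulness of $\be^{\red}_{\bd}$.
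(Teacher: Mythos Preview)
Your proof is correct and follows essentially the same approach as the paper: both directions use Lemma \ref{lem:invariants} for $\subseteq$ and the factorization \eqref{eqn:e to e red} together with full faithfulness of $\be^{\red}_{\bd}$ and idempotent completeness of $\DD_{C^{(\bd)}}$ for $\supseteq$. Your discussion of the averaging projector and the Karoubian hypothesis makes explicit exactly the points the paper leaves implicit.
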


\begin{proof}

In \eqref{eqn:invariants}, we showed that any object in $\text{Im }\be^{\red}_{\bd}$ is the $S_{\bd}$ invariant part of an object in $\text{Im }\be_{\bd}$, which implies the inclusion $\subseteq$ of \eqref{eqn:direct summands}. For the opposite inclusion, consider a direct summand $A$ of some $\be_{\bd}  (\gamma)$, which corresponds to a decomposition
$$
\be_{\bd}  (\gamma) \cong A \oplus \overline{A}
$$
The endomorphism $\phi : \be_{\bd}  (\gamma) \rightarrow \be_{\bd}  (\gamma)$ given by projection onto $A$ satisfies $\phi^2 = \phi$. Since $\be_{\bd} = \be_{\bd}^{\red} \circ R\rho_{\bd *}$, the $\bd = \bd'$ case of formula \eqref{eqn:fully faithful} gives rise to an endomorphism
$$
\phi' : R\rho_{\bd*}(\gamma) \rightarrow R\rho_{\bd*}(\gamma)
$$
such that ${\phi'}^2 = \phi'$. Because the derived category of coherent sheaves on a smooth variety is idempotent complete, we may write
$$
R\rho_{\bd*}(\gamma) \cong A' \oplus \overline{A}'
$$
with $\phi'$ being projection onto $A'$. Since the morphism $\phi$ is obtained by applying the functor $\be^{\red}_{\bd}$ to the morphism $\phi'$, we conclude that
\begin{equation}
\label{eqn:need}
\be^{\red}_{\bd}  (A') \cong A
\end{equation}
This establishes the $\supseteq$ inclusion of \eqref{eqn:direct summands}.

\end{proof}

\noindent Finally, we need to show that the essential images of the functors $\be_{\bd}^{\red}$ generate $\DD_{\quot_d}$ as a triangulated category. Consider the maps in diagram \eqref{eqn:cartesian} for the composition $(d) = (d,0,\dots,0)$. For any $\gamma \in \text{Ob}(\DD_{\quot_d})$, there exists an action
$$
S_d \curvearrowright \gamma \otimes Ls_{(d)}^* \Big( R\rho_{(d)*}(\CO_{C^d}) \Big)  \stackrel{\eqref{eqn:an equality}}\cong \gamma \otimes Rp_{(d)*}(\CO_{\quot_{1,\dots,d}}) = Rp_{(d)*} \Big(Lp_{(d)}^*(\gamma) \Big) 
$$
By \eqref{eqn:invariant claim}, the $S_d$ invariant object of the action above is precisely $\gamma$, so we conclude that any object in $\DD_{\quot_d}$ is a direct summand of an object in the category
\begin{equation}
\label{eqn:the category}
Rp_{(d)*} \left(\text{Ob}(\DD_{\quot_{1,\dots,d}}) \right)
\end{equation}
To understand the category \eqref{eqn:the category}, we recall the composition
\begin{equation}
\label{eqn:tower}
s_d : \quot_{1,\dots,d} \rightarrow \quot_{1,\dots,d-1} \times C \rightarrow \dots \rightarrow \quot_{1} \times C^{d-1} \rightarrow C^d
\end{equation}
for which (according to Lemma \ref{lem:pi minus}) all the $d$ arrows are projectivizations of rank $r$ vector bundles. Iterating \eqref{eqn:orlov} $d$ times implies that $\DD_{\quot_{1,\dots,d}}$ is generated by
$$
\Big\{\CL_1^{k_1} \otimes \dots \otimes \CL_d^{k_d} \otimes \text{Im }Ls^*_d\Big\}_{0 \leq k_1, \dots, k_d < r}.$$ In turn, this means that the category $Rp_{(d)*} \left(\DD_{\quot_{1,\dots,d}} \right)$ is generated by $$\Big\{ \text{Im } \be_{k_1} \circ \dots \circ \be_{k_d} : \DD_{C^d} \rightarrow \DD_{\quot_d} \Big\}_{0 \leq k_1, \dots, k_d < r}.$$
However, because of Proposition \ref{prop:quadratic relation e} \footnote{Or more precisely, because of the straightforward analogue of this Proposition when the $\te_i$ are replaced by $\be_i$. Explicitly, this analogous statement is that $\forall i < j$ there is a natural transformation
\begin{equation}
\label{eqn:quadratic relation e tilde}
\textcolor{blue}{\te_j} \circ \textcolor{red}{\te_i}  \rightarrow \textcolor{red}{\te_i} \circ \textcolor{blue}{\te_j} \ \  : \ \ D_{\quot_{d-1} \times \textcolor{red}{C} \times \textcolor{blue}{C}} \rightarrow D_{\quot_{d+1}}
\end{equation}
whose cone is filtered with associated graded $\bigoplus_{k=i}^{j-1} \te_k \circ \te_{i+j-k} \circ (\text{Id}_{\quot_{d-1}} \times \Delta)_* \circ (\text{Id}_{\quot_{d-1}} \times \Delta)^*$.}, the essential images of the functors $\be_{k_1} \circ \dots 
 \circ \be_{k_d}$ are actually generated as triangulated subcategories by the essential images of the same functors with $k_1 \leq \dots \leq k_d$. Putting all the facts above together implies that $\DD_{\quot_d}$ consists of direct summands of complexes built out of 
$$
\Big\{ \text{Im } \be_{k_1} \circ \dots \circ \be_{k_d} : \DD_{C^d} \rightarrow \DD_{\quot_d} \Big\}_{0 \leq k_1 \leq  \dots \leq k_d < r}.
$$
By Lemma \ref{lem:vice versa}, we conclude now that $\DD_{\quot}$ is generated by the categories $\text{Im }\be_{\bd}^{\red}$, by invoking the following result applied in our case to the category \eqref{eqn:the category} (cf \cite[Corollary A.12]{ls}; note that the admissibility hypothesis of \emph{loc. cit.} does indeed apply to semi-orthogonal decompositions of the form \eqref{eqn:orlov}, which are the only ones considered in the present paper, due to Theorem 3.1 of \emph{loc. cit.}).

\begin{lemma}

If a triangulated category $\mathscr{C}$ has a semi-orthogonal decomposition in terms of triangulated subcategories $\mathscr{C}_1,\dots,\mathscr{C}_n$, then the idempotent completion of $\mathscr{C}$ has a semi-orthogonal decomposition in terms of idempotent completions of $\mathscr{C}_1,\dots,\mathscr{C}_n$.

\end{lemma}\end{proof}

\section{The cohomology of tautological bundles on Quot schemes}
\label{sec:tautological}

\vskip .2in

\subsection{Tautological bundles} 

Recall the rank $d$ tautological bundle $M^{[d]}$ of Definition \ref{def:tautological bundle early}, associated to any line bundle $M$ on $C$. The main purpose of this Section is to prove Theorem \ref{thm:tautological intro} on the cohomology of various exterior powers of such tautological bundles. Before we do so in full generality, we will deal with the special case of the determinant line bundle $\det M^{[d]} \to \quot_d.$ Using the Grothendieck-Riemann-Roch theorem, it is easy to show that
\begin{equation}
\label{eqn:det}
\det M^{[d]} \cong \CL \otimes s_{(d)}^*(M^{(d)})
\end{equation}
where $\CL = \det R\pi_* \CF$ and $s_{(d)} : \quot_d \rightarrow C^{(d)}$ is the support morphism.  

\begin{proposition}
\label{prop:tautological}

For any line bundle $M$ on $C$, we have an isomorphism 
\begin{equation}
\label{eqn:iso tautological lemma}
H^\bull\left(\quot_d, \det M^{[d]} \right) \cong \wedge^d H^\bull(C,V \otimes M)
\end{equation}
as graded vector spaces.

\end{proposition}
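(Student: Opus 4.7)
The plan is to combine the explicit formula $\det M^{[d]} \cong \CL \otimes s_{(d)}^*(M^{(d)})$ from \eqref{eqn:det} with base change along the finite $S_d$-quotient $\rho_{(d)}: C^d \to C^{(d)}$, and then to compute cohomology via an iterated push-forward along the tower of projectivizations factorizing $s_d: \quot_{1,\dots,d} \to C^d$.

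By the projection formula, $R\Gamma(\quot_d, \det M^{[d]}) = R\Gamma(C^{(d)}, Rs_{(d)*}(\CL) \otimes M^{(d)})$. Since $\rho_{(d)}$ is a finite $S_d$-quotient and we work in characteristic zero, $S_d$-invariants commute with $R\Gamma$. Invoking the base-change equivalence \eqref{eqn:equiv cartesian other way} and the identification $Lp_{(d)}^*(\CL) = \tilde{\CL} := \CL_1 \otimes \dots \otimes \CL_d$ on the fully nested Quot scheme, we obtain
$$R\Gamma(\quot_d, \det M^{[d]}) = R\Gamma\bigl(C^d, Rs_{d*}(\tilde{\CL}) \otimes M^{\boxtimes d}\bigr)^{S_d},$$
where the $S_d$-action on the right-hand side is transported, via base change, from the natural permutation action on $R\rho_{(d)*}(\CO_{C^d})$. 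The sheaf $Rs_{d*}(\tilde{\CL})$ on $C^d$ is then computed by iterating along the tower $\quot_{1,\dots,d} \xrightarrow{\pi_d} \quot_{1,\dots,d-1} \times C \to \dots \to C^d$, whose arrows are, by Lemma \ref{lem:pi minus}, projectivizations of the rank-$r$ universal bundles $\CE^{(k-1)}$, with each $\CL_k$ the corresponding $\CO(1)$. Applying Lemma \ref{lem:push d} at each stage, together with the short exact sequences $0 \to \CE^{(k)} \to \CE^{(k-1)} \to Q_k \to 0$ relating successive universal subsheaves, one expresses $Rs_{d*}(\tilde{\CL})$ as a complex built from $V^{\boxtimes d}$ together with correction terms supported on the partial diagonals of $C^d$.

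The crux of the argument is to identify the induced $S_d$-equivariant structure: after taking $R\Gamma$ and tensoring with $M^{\boxtimes d}$, the resulting action on $H^\bull(C, V \otimes M)^{\otimes d}$ must be the sign-twist of the naive permutation, so that its $S_d$-invariants yield the alternating power $\wedge^d H^\bull(C, V \otimes M)$ rather than the symmetric power $S^d H^\bull(C, V \otimes M)$. This sign twist reflects the fact that $\wedge^d V \subset V^{\otimes d}$ is the sign-isotypic component: the top alternating tensor that emerges from the iterated fiber computation along the tower aligns, under the base-change isomorphism, with the trivial $S_d$-isotype of the equivariant structure inherited from $R\rho_{(d)*}(\CO_{C^d})$. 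Carefully verifying this sign twist by tracking the equivariance through each step of the tower is the principal technical obstacle.
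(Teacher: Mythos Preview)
Your outline matches the paper's approach essentially step for step: the reduction via \eqref{eqn:det}, the passage to $S_d$-invariants on $C^d$ through \eqref{eqn:equiv cartesian other way}, and the inductive computation of $Rs_{d*}(\CL_1\otimes\dots\otimes\CL_d)$ along the tower of $\BP^{r-1}$-bundles are exactly what the paper does, and you correctly isolate the sign twist as the heart of the matter.

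Where your proposal stops short is precisely where the paper has to do real work. You say you will track the equivariance ``through each step of the tower,'' but $S_d$ does not act on $\quot_{1,\dots,d}$ at all, so there is no equivariant structure on the tower to track. The $S_d$-structure on $Rs_{d*}(\CL_1\otimes\dots\otimes\CL_d)$ exists only after push-forward, inherited via base change from $R\rho_{(d)*}(\CO_{C^d})$, and it is not a priori clear how it interacts with the filtration produced by the iterated projective-bundle computation. The paper resolves this by introducing, for each transposition $\sigma_i$, an auxiliary moduli space $\fY_i$ (parametrizing flags with a doubled middle step) on which $\sigma_i$ acts honestly; Lemma \ref{lem:big} then shows that the base-change equivariance agrees with the geometric one coming from $\fY_i$. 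The sign itself is pinned down by the observation that $\CL_1\otimes\dots\otimes\CL_d$ arises as the determinant of a sum of skyscraper contributions, so that swapping two sheets inside the determinant produces the alternation. Your heuristic about ``the top alternating tensor aligning with the trivial isotype'' gestures at the right answer but does not supply a mechanism; without something like the $\fY_i$ construction, you have no handle on why the induced action is the sign-twisted permutation rather than the naive one, nor on why the error terms (supported on partial diagonals) are each fixed by some transposition and hence killed after tensoring with $\chi$ and taking invariants.
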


\begin{proof} We henceforth write $\CE^{(b)}_a$ for the universal sheaf on $\quot_{1,\dots,k} \times C^{\ell}$ which is pulled back from $\quot_b$ times the $a$-th copy of $C$, for all applicable indices. We will often write $C_1 \times \dots \times C_d$ instead of $C^d$, in order to differentiate between the $d$ factors. Similarly, for any vector bundle $X$ on $C$, we will write $X_a$ for its pull-back to $C^d$ from $C_a$. Consider the maps in the diagram \eqref{eqn:cartesian} for the composition $(d) = (d,0,\dots,0)$
$$
\xymatrix{& \quot_{1,\dots,d} \ar[ld]_{s_d} \ar[rd]^{p_{(d)}} \\ C^{d} \ar[dr]_{\rho_{(d)}} & & \quot_{d} \ar[ld]^{s_{(d)}} \\ & C^{(d)} &}
$$
We have the following equalities
\begin{equation}
\label{eqn:first step}
\begin{split}
 H^\bull\left(\quot_d, \det M^{[d]} \right) &= H^\bull \left ( C^{(d)}, \, {Rs_{(d)*}} (\CL \otimes  s_{(d)}^*(M^{(d)}) ) \right) \\
 & = H^\bull \left ( C^{(d)}, \, {Rs_{(d)*}} (\CL) \otimes M^{(d)} \right) \\
 & = H^\bull \left ( C^{d}, \, L\rho_{(d)}^*({Rs_{(d)*}} (\CL))  \otimes  M^{d} \right)^{S_d} \\
 & \stackrel{\eqref{eqn:equiv cartesian other way}}= H^\bullet \Big(C^d, Rs_{d*} \left( \CL_1 \otimes \dots \otimes \CL_d \right) \otimes M_1 \otimes \dots \otimes M_d   \Big)^{S_d}
\end{split} 
\end{equation}
As it is isomorphic to a pullback from $C^{(d)}$, the complex $Rs_{d*} \left( \CL_1 \otimes \dots \otimes \CL_d \right)$ is naturally $S_d$ equivariant. If we let $\chi$ denote the structure sheaf $C^d$ endowed with the sign character of $S_d$, we claim the following isomorphism of $S_d$-equivariant complexes:
\begin{equation}
\label{eqn:big equation}
Rs_{d*} \left( \CL_1 \otimes \dots \otimes \CL_d \right) \cong \left (V_1 \otimes \dots \otimes V_d   \mathsf{\, + \, Error} \right ) \otimes \chi
\end{equation}
where on the right-hand side the tensor product $V_1 \otimes \dots \otimes V_d$ has the natural $S_d$ action by permutation of the factors, and $\mathsf{\, + \, Error}$ refers to taking extensions by objects in the derived category of $C^d$ which are preserved by some transposition in $S_d$. Once we establish \eqref{eqn:big equation}, Proposition \ref{prop:tautological} follows as a consequence of \eqref{eqn:first step}, since 
$$
H^\bullet \left(C^d, \Big(V_1 \otimes \dots \otimes V_d  \mathsf{\, + \, Error}\Big) \otimes M_1 \otimes \dots \otimes M_d \otimes \chi  \right)^{S_d} = \wedge^d H^\bullet(C,V \otimes M)
$$
with the sign character being responsible for the exterior algebra in the right-hand side. The error term does not contribute $S_d$ invariants (upon tensoring with the sign character $\chi$) since each one of its summands is preserved by some transposition.

\medskip 

It remains thus to prove the isomorphism \eqref{eqn:big equation}. For a clearer exposition, we will first establish it {\it non-equivariantly}, and then account for the $S_d$ equivariance. Our approach is to interpret $s_d$ as the projection from the total space $\quot_{1, \ldots, d}$ of a tower of projective bundles to the base, i.e. to factor it as
\begin{multline}
\label{eqn:tower final}
s_d : \quot_{1,\dots,d} \xrightarrow{p_1} \quot_{1,\dots,d-1} \times C_1 \xrightarrow{p_2} \dots \\ \dots \xrightarrow{p_{d-1}} \quot_{1} \times C_1 \times \dots \times C_{d-1} \xrightarrow{p_d} C_1 \times \dots \times C_{d}
\end{multline}
Each map $p_j$ is the projectivization of the rank $r$ locally free sheaf $\CE^{(d-j)}_j$ on $\quot_{1,\dots,d-j}$ $\times C_1 \times \dots \times C_j$, with tautological line bundle $\CL_j$. Thus, formula \eqref{eqn:push d} gives us
\begin{equation}
\label{eqn:push final}
Rp_{j*}(\CL_j^n) = S^{n} \CE_j^{(d-j)} 
\end{equation}
for all $n \geq 0$. We prove \eqref{eqn:big equation} by factoring $Rs_{d*} = Rp_{d*} \circ \dots \circ Rp_{1*}$ and computing the push-forwards one at a time. Specifically, we will prove the following isomorphism in $\DD_{\quot_{1,\dots,d-k} \times C_1 \times \dots \times C_k}$ by induction on $k$,
\begin{equation}
\label{eqn:josie}
Rp_{k*} \circ \dots \circ Rp_{1*} \left(\CL_1 \otimes \dots \otimes \CL_d \right) = \CE^{(d-k)}_1 \otimes \dots \otimes \CE^{(d-k)}_k \otimes \CL_{k+1} \otimes \dots \otimes \CL_d \mathsf {\, +\, Error}.
\end{equation}
The key to establishing the induction step of \eqref{eqn:josie} is to calculate 
$$
Rp_{k+1*} \left(\CE^{(d-k)}_1 \otimes \dots \otimes \CE^{(d-k)}_k \otimes \CL_{k+1} \otimes \dots \otimes \CL_d \right).
$$
On $\quot_{1,\dots,d-k} \times C_1 \times \dots \times C_k$, we have the following exact sequence $\forall a \in \{1,\dots,k\}$
$$
0 \rightarrow \CE^{(d-k)}_a \rightarrow \CE^{(d-k-1)}_a \rightarrow \CL_{k+1} \otimes \CO_{\Delta_{a\sharp}} \rightarrow 0 
$$
where $\Delta_{a\sharp}$ denotes the codimension one diagonal identifying the curve $C_a$ with the $d-k$-th support point of $\quot_{1,\dots,d-k}$. Thus, on $\quot_{1,\dots,d-k} \times C_1 \times \dots \times C_k$ we obtain an isomorphism
$$
\CE^{(d-k)}_1 \otimes \dots \otimes \CE^{(d-k)}_k \otimes \CL_{k+1} \otimes \dots \otimes \CL_d \cong 
$$
$$
\left( \CE^{(d-k-1)}_1 - \CL_{k+1} \otimes \CO_{\Delta_{1\sharp}} \right) \otimes \dots \otimes \left( \CE^{(d-k-1)}_k - \CL_{k+1} \otimes \CO_{\Delta_{k\sharp}} \right) \otimes \CL_{k+1} \otimes \dots \otimes \CL_d 
$$
where $-$ indicates some non-trivial extension (shifted by 1 in the derived category) which will not be important to us. Applying the functor $Rp_{k+1*}$ to the formula above, yields an extension of the main term
\begin{equation}
\label{eqn:main object}
\CE^{(d-k-1)}_1 \otimes \dots \otimes \CE^{(d-k-1)}_k \otimes \CE^{(d-k-1)}_{k+1} \otimes \CL_{k+2} \otimes \dots \otimes \CL_d
\end{equation}
by shifts of the following objects, as $A = \{a_1 < \dots < a_t\}$ runs over non-empty subsets of $\{1,\dots,k\}$:
\begin{equation}
\label{eqn:error object}
\Delta_{a_1,\dots,a_t,k+1 *} \left(S^{t+1} \CE^{(d-k-1)} \right) \bigotimes_{b \in \{1,\dots,k\} \backslash A} \CE^{(d-k-1)}_b \otimes \CL_{k+2} \otimes \dots \otimes \CL_d.
\end{equation}
When $k+1=d$, the term \eqref{eqn:main object} is precisely $V_1 \otimes \dots \otimes V_d$ and the various terms \eqref{eqn:error object} are the error objects in formula \eqref{eqn:big equation}. 

\vskip.1in

To establish \eqref{eqn:big equation} as an {\it equivariant} isomorphism,  we need to describe the $S_d$ action on the left-hand side and to show that every error object is preserved by some transposition $\sigma_i = (i \ i+1)$. As $S_d$ does not act on $\quot_{1, \dots, d}$, we will use the following auxiliary construction. For any $i \in \{1,\dots,d-1\}$, consider the moduli space $\fY_i$ of subsheaves
\begin{equation}
\label{eqn:yi}
\xymatrix{& & & E^{(d-i)} \ar@{^{(}->}[rd]^-{x_{i+1}} & & & & \\
E^{(d)} \ar@{^{(}->}[r]^-{x_{1}} & \dots \ar@{^{(}->}[r]^-{x_{i-1}} & E^{(d-i+1)} \ar@{^{(}->}[ru]^{x_i} \ar@{^{(}->}[rd]_{x_{i+1}}
& & E^{(d-i-1)} \ar@{^{(}->}[r]^-{x_{i+2}} & \dots \ar@{^{(}->}[r]^-{x_{d}}  & E^{(0)} = V \\
& & & \widetilde{E}^{(d-i)} \ar@{^{(}->}[ru]_{x_i} & & & &}  
\end{equation}
which has a natural $S_2$ action by permuting $E^{(d-i)}$ and $\tE^{(d-i)}$. We have maps $\fY_i \xrightarrow{\pi^\uparrow_i,\pi^\downarrow_i} \quot_{1,\dots,d}$ given by forgetting $\tE^{(d-i)}$ and $E^{(d-i)}$, respectively. The composition 
$$
r_i : \fY_i \xrightarrow{\pi^\uparrow_i} \quot_{1,\dots,d} \xrightarrow{s_d} C^d
$$
is equivariant with respect to $S_2 = \langle \sigma_i \rangle$. The following is proved in the Appendix. 

\begin{lemma}
\label{lem:big}

For any $i \in \{1,\dots,d-1\}$, we have 
\begin{equation}
\label{eqn:blow up i}
R\pi^\uparrow_{i*}(\CO_{\fY_i}) = \CO_{\quot_{1,\dots,d}} 
\end{equation}
(and similarly for $\downarrow$). For any $\gamma \in \emph{Ob}(\DD_{\quot_{d}})$, the object
$$
\Gamma = Rs_{d*} \circ Lp_{(d)}^*(\gamma)  = Rr_{i*} (L\pi_i^{\uparrow *} \circ Lp_{(d)}^*(\gamma) ) \in \emph{Ob} \left(\DD_{C^d}\right)
$$
has the following two $S_2$ equivariant structures (i.e. isomorphisms $\sigma_i^*(\Gamma) \cong \Gamma$ which square to the identity)

\begin{itemize}[leftmargin=*]

\item the first is inherited from the $S_d$ action on 
$$
\Gamma \stackrel{\eqref{eqn:equiv cartesian other way}}{\cong}L\rho_{(d)}^* \circ Rs_{(d)*} (\gamma)
$$

\item the second is obtained by pushing forward the equality
$$
\sigma_i^* \left(L\pi_i^{\uparrow *} \circ Lp_{(d)}^*(\gamma) \right) = L\pi_i^{\uparrow *} \circ Lp_{(d)}^*(\gamma)
$$
under the equivariant map $r_i$ (thus obtaining $\sigma^*_i(\Gamma)$ in the LHS and $\Gamma$ in the RHS).

\end{itemize}

\noindent The two equivariant structures above coincide.

\end{lemma}

With Lemma \ref{lem:big} in mind, let us upgrade \eqref{eqn:big equation} to an isomorphism of $S_d$ equivariant sheaves. It suffices to check by induction on $k$ that 
\begin{multline}
\label{eqn:cat}
Rp_{k+1*} \circ Rp_{k*} \circ \dots \circ Rp_{1*} \left(\CL_1 \otimes \dots  \otimes \CL_d \right) = \\ = \left ( \CE^{(d-k-1)}_1 \otimes \dots  \otimes  \CE^{(d-k-1)}_k \otimes  \CE^{(d-k-1)}_{k+1} \otimes \CL_{k+2} \otimes \dots \CL_d \mathsf {\, + \, Error} \right ) \otimes \chi
\end{multline}
holds $S_{k+1}$-equivariantly on $\quot_{1, \dots, d -k-1} \times C_1 \times \dots \times C_{k+1}$. It is moreover enough to verify the action of the transpositions $\sigma_i = (i \ i+1)$ with $i \in \{1,\dots, k\}$.  The induction hypothesis and the fact that the induction step was symmetric in $\CE^{(b)}_1,\dots,\CE^{(b)}_k$ imply that the isomorphism \eqref{eqn:cat} is preserved by the transpositions $\sigma_1,\dots,\sigma_{k-1}$, so it remains to deal with the transposition $\sigma_k$. 

\medskip

To this end, we will use the space $\fY_k$ on which the transposition $\sigma_k$ acts. It comes with the line bundles $\CL_1, \dots, \CL_d$ pulled back from $\quot_{1, \ldots, d}$ via $\pi_k^{\uparrow},$ as well as  
$$
\tilde{\CL}_k = \sigma_k^*(\CL_k) ,  \, \, \,   \tilde{\CL}_{k+1} = \sigma_k^*(\CL_{k+1}), \, \, \text{satisfying} \, \, \CL_k \otimes \CL_{k+1} \cong \tilde{\CL}_k \otimes \tilde{\CL}_{k+1}.
$$
We let $\pi: \fY_k \times C \to \fY_k$ be the usual projection, and denote by $\Delta_1, \dots, \Delta_d$ the diagonals identifying points of $C$ with each of the support points $x_1,\dots,x_d$ in the notation of \eqref{eqn:yi}. We note that $S_2$-equivariantly, we have then on $\fY_k,$
\begin{equation}
L\pi_k^{\uparrow *} \circ Lp_{(d) }^*(\CL) = \det R\pi_*(\CF^{(d)}) = \det R\pi_* \left ( \sum_{i=1}^{d} \CL_i \otimes \CO_{\Delta_i} \right ) = \CL_1 \otimes \dots \otimes \CL_d \otimes \chi,
\end{equation}
since the $S_2$ action interchanges the $k$th and $k+1$st sheets inside the determinant. In addition to the alternation, the action of $\sigma_k$ on the object \eqref{eqn:main object} is obtained by calculating
\begin{equation}
\label{eqn:steve}
Rp_{k+1 *} \circ Rp_{k*} \circ R\pi_{k*}^{\uparrow}\left(\CL_k \otimes \CL_{k+1} \otimes \dots\right)
\end{equation}
as well as
\begin{equation}
\label{eqn:steve?}
Rp_{k+1 *} \circ Rp_{k*} \circ R\pi_{k*}^{\uparrow}\left(\sigma_k^*(\CL_k) \otimes \sigma_k^*(\CL_{k+1}) \otimes \dots\right)
\end{equation}
Formula \eqref{eqn:steve} equals
\begin{align*}
&Rp_{k+1 *} \left(\CE^{(d-k)}_k \otimes \CL_{k+1} \otimes \dots\right) = \\
=& Rp_{k+1 *} \left( \Big( \CE^{(d-k-1)}_k - \CL_{k+1} \otimes \CO_{\Delta_{k,k+1}} \Big) \otimes \CL_{k+1} \otimes \dots\right) = \\
=& \left( \CE^{(d-k-1)}_k  \otimes  \CE^{(d-k-1)}_{k+1} - \Delta_{k,k+1 *}\left(S^2 \CE^{(d-k-1)}\right) \right) \otimes \dots
\end{align*}
while \eqref{eqn:steve?} produces the same result, but with $\CE^{(d-k-1)}_k$ and $\CE^{(d-k-1)}_{k+1}$ swapped, while the term $\Delta_{k,k+1 *}(S^2 \CE^{(d-k-1)})$ is $\sigma_k$-invariant. This shows that the transposition $\sigma_{k}$ acts on the right-hand side of \eqref{eqn:cat} by the usual permutation, as claimed. Moreover, we may similarly control the $S_d$ equivariance of the objects \eqref{eqn:error object}: if $a_t = k$ then the argument above explains why the transposition $\sigma_k$ preserves the object in question. Meanwhile, if $a_t < k$, recall that the induction hypothesis implies that the transposition $(a_t \ k)$ sign-permutes $\CE^{(d-k-1)}_{a_t}$ and $\CE^{(d-k-1)}_{k}$ in the right-hand side of \eqref{eqn:josie}. Therefore, this transposition also permutes the object \eqref{eqn:error object} and the same-named object with $k$ instead of $a_t$. Because of this, the transposition
$$
(a_t \ k) (k \ k+1) (a_t \ k) = (a_t \ k+1)
$$
preserves the object \eqref{eqn:error object}, as we needed to show.

\end{proof}

\subsection{Exterior powers}

In Theorem \ref{thm:tautological intro}, we are interested in generalizing Proposition \ref{prop:tautological} to exterior powers of tautological bundles. To this end, we will compute these exterior powers in terms of nested Quot schemes.

\begin{proposition}
\label{prop:tautological bundle}

For any integers $0 \leq \ell \leq d$ and any line bundle $M$ on $C$, we have
\begin{equation}
\label{eqn:wedge to e}
\wedge^\ell M^{[d]} = Rp_{+*} \left(\CL_2 \otimes s_{(\ell)}^* ( M^{(\ell)} ) \right) = \be_{(d-\ell,\ell,0,\dots,0)}^{\ered}  \left(\CO^{(d-\ell)} \boxtimes M^{(\ell)} \right),
\end{equation}
where we have the maps
\begin{align*}
&p_+ : \quot_{\ell,d} \rightarrow \quot_d, & &(E^{(d)} \subset E^{(\ell)} \subset V) \mapsto (E^{(d)} \subset V) \\
&s_{(\ell)} : \quot_{\ell,d} \rightarrow  C^{(\ell)}, & &(E^{(d)} \subset E^{(\ell)} \subset V) \mapsto \emph{supp } V/E^{(\ell)}
\end{align*}
and the line bundle $\CL_2$ on $\quot_{\ell,d}$ parametrizes $\det \Gamma(C,V/E^{(\ell)})$, see \eqref{eqn:line bundles}.
The second equality in \eqref{eqn:wedge to e} simply reflects the definition of the functor $\be_{(d-\ell,\ell,0,\dots,0)}^{\ered}$, see \eqref{eqn:the functors}.

\end{proposition}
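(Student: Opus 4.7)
The approach splits into three steps, the last being the true obstacle.

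\emph{Reduction.} The second equality in \eqref{eqn:wedge to e} is a formal unwinding of Definition \ref{def: mainfunctors} for $\bd=(d-\ell,\ell,0,\dots,0)$. Indeed, the nested scheme $\quot_{\bd}$ of \eqref{eqn:nested quot scheme} collapses to $\quot_{\ell,d}$, and the line bundle $\CL_{\bd}$ of \eqref{eqn:line bundle composition} reduces to $\CL_2$ (every $\CL_i$ with $i\geq 3$ being trivial because of the zeros among the $d_i$). The support map $s_{\bd}$ into $C^{(d-\ell)}\times C^{(\ell)}$ records the supports of $\CE^{(\ell)}/\CE^{(d)}$ and of $V/\CE^{(\ell)}$ respectively, so $Ls_{\bd}^*(\CO^{(d-\ell)}\boxtimes M^{(\ell)})=s_{(\ell)}^*(M^{(\ell)})$. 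Thus the real content is the first equality $\wedge^\ell M^{[d]}=Rp_{+*}(\CL_2\otimes s_{(\ell)}^*(M^{(\ell)}))$.

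\emph{Construction of a canonical morphism $\alpha$.} Set $W_2:=\pi_*(\CF^{(\ell)}\otimes\rho^*M)$, a rank-$\ell$ vector bundle on $\quot_{\ell,d}$. Via the forgetful map $q\colon\quot_{\ell,d}\to\quot_\ell$ and flat base change along $q\times\mathrm{id}_C$, we have $W_2=q^*M^{[\ell]}$; combining with \eqref{eqn:det} on $\quot_\ell$ identifies
\begin{equation*}
\CL_2\otimes s_{(\ell)}^*(M^{(\ell)})=q^*(\det M^{[\ell]})=\det W_2.
\end{equation*}
The universal sequence $0\to\CE^{(\ell)}/\CE^{(d)}\to\CF^{(d)}\to\CF^{(\ell)}\to 0$ on $\quot_{\ell,d}\times C$ pushes forward under $\pi_*(-\otimes\rho^*M)$ (which is exact because $R^1\pi_*$ vanishes on finite-length sheaves on the curve) to a short exact sequence of vector bundles
\begin{equation*}
0\to W_1\to p_+^*M^{[d]}\to W_2\to 0
\end{equation*}
on $\quot_{\ell,d}$, of ranks $d-\ell,\,d,\,\ell$ respectively. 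Taking $\wedge^\ell$ produces a natural quotient $p_+^*(\wedge^\ell M^{[d]})=\wedge^\ell p_+^*M^{[d]}\twoheadrightarrow\det W_2$, and the $p_+^*\dashv Rp_{+*}$ adjunction produces the desired morphism
\begin{equation*}
\alpha\colon\wedge^\ell M^{[d]}\longrightarrow Rp_{+*}\bigl(\CL_2\otimes s_{(\ell)}^*(M^{(\ell)})\bigr).
\end{equation*}

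\emph{The hard step: $\alpha$ is an isomorphism.} My plan is to pull everything back to the full flag scheme $\quot_{1,\dots,d}$ along the forgetful map $u\colon\quot_{1,\dots,d}\to\quot_{\ell,d}$. By iterated application of Lemma \ref{lem:pi minus general} (adding one sheaf at a time, both in the block below $E^{(\ell)}$ and in the block above), $u$ factors as a tower of projective bundles, so $Ru_*\CO_{\quot_{1,\dots,d}}=\CO_{\quot_{\ell,d}}$. The projection formula, together with the identifications $u^*\CL_2=\CL_{d-\ell+1}\otimes\dots\otimes\CL_d$ (splitting the length-$\ell$ determinant $\det\Gamma(V/E^{(\ell)})$ into its $\ell$ successive length-$1$ pieces) and $u^*(s_{(\ell)}^*M^{(\ell)})=s_d^*(\CO^{\boxtimes(d-\ell)}\boxtimes M^{\boxtimes\ell})$, rewrites the right-hand side as
\begin{equation*}
Rt_{d*}\Bigl(\CL_{d-\ell+1}\otimes\dots\otimes\CL_d\otimes s_d^*(\CO^{\boxtimes(d-\ell)}\boxtimes M^{\boxtimes\ell})\Bigr).
\end{equation*}
From here I plan to adapt the inductive tower-of-projective-bundles argument from the proof of Proposition \ref{prop:tautological} (compare \eqref{eqn:josie}--\eqref{eqn:error object}) in its relative form over $\quot_d$, enriched with the $S_{d-\ell}\times S_\ell$ equivariant bookkeeping of Lemma \ref{lem:big} (with trivial character on the $S_{d-\ell}$ factor and sign character on the $S_\ell$ factor, the latter coming from the exterior power). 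The principal term reassembles to $\wedge^\ell M^{[d]}$ on $\quot_d$, while the diagonal ``error'' terms are annihilated by an appropriate transposition. The chief technical obstacle is carrying through this equivariant bookkeeping at the level of the derived category, rather than only after taking cohomology as in Proposition \ref{prop:tautological}; a fallback strategy is a fibrewise verification of $\alpha$ over $\quot_d$, using Borel--Weil--Bott on the Quot scheme of length-$\ell$ quotients of $\CF^{(d)}$ (a possibly degenerate Grassmannian) to identify both fibres with $\wedge^\ell\Gamma(C,\CF^{(d)}\otimes M)$ and then invoking the constant fibre dimension to upgrade to a global isomorphism.
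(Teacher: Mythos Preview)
Your reduction and the construction of the map $\alpha$ are fine, but the hard step contains a genuine error. The claim that $u:\quot_{1,\dots,d}\to\quot_{\ell,d}$ is a tower of projective bundles with $Ru_*\CO=\CO$ is false. Lemma~\ref{lem:pi minus general} exhibits $\quot_{\dots,a,a+1,b,\dots}$ as a projectivization over $\quot_{\dots,a,b,\dots}\times C$, not over $\quot_{\dots,a,b,\dots}$; you have silently dropped all of these curve factors. In fact $u$ is generically finite of degree $(d-\ell)!\,\ell!$ (over a point of $\quot_{\ell,d}$ with reduced support, the fiber consists of the orderings of the two blocks of support points), so $Ru_*\CO_{\quot_{1,\dots,d}}$ is a sheaf of generic rank $(d-\ell)!\,\ell!$, certainly not $\CO_{\quot_{\ell,d}}$. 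Your rewriting of the right-hand side as $Rt_{d*}(\CL_{d-\ell+1}\otimes\dots\otimes\CL_d\otimes\cdots)$ therefore does not hold, and the plan built on it collapses.

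The paper repairs this by refining only the top block: it works on $\quot_{1,\dots,\ell,d}$ rather than the full flag, and retains the $\ell$ curve factors by pushing forward along $t_+:\quot_{1,\dots,\ell,d}\to\quot_d\times C^\ell$. The correct substitute for your $Ru_*\CO=\CO$ is the base-change statement (a direct analog of Proposition~\ref{prop:cartesian}) that $R(r\times s_\ell)_*\CO=\CO_{\quot_{\ell,d}\times_{C^{(\ell)}}C^\ell}$, which gives $Rp_{+*}(\gamma)=\bigl(R\pi_*\circ Rt_{+*}\circ Lr^*(\gamma)\bigr)^{S_\ell}$ for any $\gamma$ on $\quot_{\ell,d}$. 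The tower induction (now along projectivizations of the rank-zero sheaves $\CE^{(\ell-k)}/\CE^{(d)}$, via Lemma~\ref{lem:pi minus general} \emph{with} its curve factor) yields the $S_\ell$-equivariant identification $Rt_{+*}(\CL'_1\otimes\dots\otimes\CL'_\ell)=(\CF^{(d)}_1\otimes\dots\otimes\CF^{(d)}_\ell+\mathsf{Error})\otimes\chi$ on $\quot_d\times C^\ell$, and applying $R\pi_*(-\otimes\rho^*M^\ell)^{S_\ell}$ recovers $\wedge^\ell M^{[d]}$. Only $S_\ell$-equivariance is needed (no $S_{d-\ell}$ factor), and no auxiliary morphism $\alpha$ is constructed: both sides are computed directly.
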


Since $\left ( \wedge^\ell M^{[d]} \right ) ^\vee = \wedge^{d-\ell} M^{[d]} \otimes (\det M^{[d]})^{-1}$, formulas \eqref{eqn:det} and \eqref{eqn:wedge to e} imply that
\begin{equation}
\label{eqn:wedge to e dual}
\left ( \wedge^\ell M^{[d]} \right ) ^\vee =  Rp'_{+*} \left({\CL}_1^{-1} \otimes {s'}_{(\ell)}^* (M^\vee)^{(\ell)}  \right),
\end{equation}
where $p_+' : \quot_{d-\ell,d} \rightarrow \quot_d$ and $s'_{(\ell)} : \quot_{d-\ell,d} \rightarrow C^{(\ell)}$ are the obvious maps, and the line bundle $\CL_1$ on $\quot_{d-\ell,d}$ parameterizes $\det \Gamma(C,E^{(d-\ell)}/E^{(d)})$, see \eqref{eqn:line bundles}. 

We also note that Proposition \ref{prop:tautological bundle} can be construed as the higher rank generalization of \cite[formula (16)]{krug1bis}.
 
\begin{proof} In the proof at hand, we let $\pi : \quot \times C^\ell \rightarrow \quot$ be the natural projection, irrespective of the indices of $\quot$. We make use of the nested Quot scheme $$\quot_{1,\dots, \ell, d} = \{ E^{(d)} \subset E^{(\ell)} \subset E^{(\ell-1)} \subset \dots \subset E^{(1)} \subset V\}$$ in which the inclusion $E^{(\ell)} \subset V$ is refined by a full intermediate flag. Consider the maps
$$
r : \quot_{1,\dots, \ell, d} \to \quot_{\ell,d} \quad \text{and} \quad s_\ell : \quot_{1,\dots,\ell,d} \rightarrow C^\ell,
$$
forgetting the intermediate full flag, respectively remembering the supports of successive quotients in it. 
We also set 
$$
t_+: \quot_{1,\dots,\ell,d} \xrightarrow{(p_+ \circ r) \times s_\ell} \quot_d \times C^\ell
$$
A straightforward generalization of Proposition \ref{prop:cartesian} gives 
$$
R(r \times s_\ell)_*(\mathcal{O}_{\quot_{1,\dots,\ell,d}}) = \CO_{\quot_{\ell,d} \times_{C^{(\ell)}} C^\ell}
$$
simply by replacing diagram \eqref{eqn:cartesian} by the following commutative diagram
\begin{equation}
\label{eqn:cartesian 2}
\xymatrix{& \quot_{1,\dots,\ell,d} \ar[ld]_{t_+} \ar[rd]^{r} \\ \quot_d \times C^{\ell} \ar[dr]_{\text{Id} \times \rho_{(\ell)}} & & \quot_{\ell,d} \ar[ld]^{p_+ \times s_{(\ell)}} \\ & \quot_d \times C^{(\ell)} &}
\end{equation}
The obvious analogues of the isomorphisms \eqref{eqn:equiv cartesian} and \eqref{eqn:equiv cartesian other way} hold, so in particular there exists an $S_{\ell}$ action on $Rr_*(\CO_{\quot_{1,\dots, \ell,d}})$ with invariant object $\CO_{\quot_{\ell,d}}$. Therefore
\begin{equation}
Rp_{+*} (\gamma ) = \left( Rp_{+*} \circ Rr_* \circ Lr^* (\gamma ) \right)^{S_{\ell}} = \left ( R\pi_* \circ Rt_{+*} \circ Lr^*  (\gamma) \right)^{S_\ell}
\end{equation}
for any $\gamma \in \text{Ob}(\DD_{\quot_{\ell,d}})$. If we take $\gamma = \CL_2 \otimes s_{(\ell)}^* ( M^{(\ell)})$ as in \eqref{eqn:wedge to e}, we have 
$$
r^*(\CL_2 \otimes s_{(\ell)}^*( M^{(\ell)})) = \CL_1' \otimes \dots \otimes \CL_{\ell}' \otimes s_\ell^*(M^\ell) 
$$
as an equality of line bundles on $\quot_{1,\dots,\ell,d}$, where $\CL_1',\dots,\CL_\ell'$ are the successive determinant line bundles $\CL_k' = \det R\pi_* (\CE^{(\ell-k)} -\CE^{(\ell-k+1)} )$ (these were called $\CL_2,\dots,\CL_{\ell+1}$ in Subsection \ref{sub:basic nested}). We make this relabeling because $\CL_k'$ will play the same role on $\quot_{1,\dots,\ell,d}$ as $\CL_k$ played on $\quot_{1,\dots,d}$ in the proof of Proposition \ref{prop:tautological}. Therefore, 
\begin{equation}
\label{eqn:rhs wedge}
\text{RHS of \eqref{eqn:wedge to e}} =R \pi_{*} \left( Rt_{+*} (\CL_1' \otimes \dots \otimes \CL_{\ell}' ) \otimes \rho^*(M^\ell) \right)^{S_\ell}
\end{equation}
where $t_+ : \quot_{1,\dots,\ell,d} \rightarrow \quot_d \times C^\ell$ and in the last equation, $\quot_d \times C^\ell \xrightarrow{\pi,\rho} \quot_d, C^\ell$ denote the standard projections. Let us now show that \eqref{eqn:wedge to e} follows from the $S_\ell$ equivariant equality
\begin{equation}
\label{eqn:the fact}
Rt_{+*} (\CL_1' \otimes \dots \otimes \CL_{\ell}' ) = \left( \CF^{(d)}_1 \otimes \dots \otimes \CF^{(d)}_\ell \mathsf {\, + \, Error} \right) \otimes \chi
\end{equation}
where $\CF^{(d)} = V/\CE^{(d)}$, $S_\ell$ permutes the factors of the tensor product on the right, $\chi$ is the alternating character and $\mathsf {\, + \, Error}$ refers to taking extensions by objects in the derived category of $\quot_d \times C^\ell$ which are preserved by some transposition in $S_\ell$ (compare with \eqref{eqn:big equation}). Indeed, applying the functor $R\pi_* ( - \otimes \rho^*(M^\ell))^{S_\ell}$ to both sides of the equation \eqref{eqn:the fact} implies
$$
\text{RHS of \eqref{eqn:wedge to e}} = (M^{[d]} \otimes \dots \otimes M^{[d]} \otimes \chi)^{S_\ell} = \wedge^\ell M^{[d]}
$$
because all the objects $\mathsf {Error} \otimes \chi$ in \eqref{eqn:the fact} go to 0 under the functor $R\pi_*(-)^{S_\ell}$, as they are negated by some transposition in $S_\ell$. 

\medskip

It remains to prove \eqref{eqn:the fact}. To do so, let us factor $t_+$ as
$$
\quot_{1,\dots,\ell,d} \xrightarrow{t_1} \quot_{1,\dots,\ell-1,d} \times C_1 \xrightarrow{t_2} \dots \xrightarrow{t_\ell} \quot_{d} \times C_1 \times \dots \times C_\ell
$$
where $t_k$ is the projectivization of the rank 0 sheaf $\CE^{(\ell-k)}_k/\CE^{(d)}_k$ (by Lemma \ref{lem:pi minus general}, hence the tautological line bundle is $\CL_{k}'$) and we will use $C_1,\dots,C_\ell$ to identify the $\ell$ factors of $C^\ell$. Thus, \eqref{eqn:the fact} is the $k=\ell$ version of the statement
\begin{equation}
\label{eqn:the step}
Rt_{k*} \circ \dots \circ Rt_{1*} \left(\CL_1' \otimes \dots \otimes \CL_{k}'\right) = (\CE^{(\ell-k)}_1/\CE^{(d)}_1) \otimes \dots \otimes (\CE^{(\ell-k)}_k/\CE^{(d)}_k) \mathsf {\, + \, Error}
\end{equation}
which we will prove by induction on $k$. To prove the induction step, we must apply $Rt_{k+1*}$ to the formula above. To this end, we will note the short exact sequence
$$
0 \rightarrow \CE^{(\ell-k)}_a \rightarrow \CE^{(\ell-k-1)}_a \rightarrow \CL_{k+1}' \otimes \CO_{\Delta_{a\sharp}} \rightarrow 0
$$
for all $a \in \{1,\dots,k\}$ on $\quot_{1,\dots,\ell-k,d} \times C_1 \times \dots \times C_k$, where $\Delta_{a\sharp}$ denotes the codimension 1 diagonal obtained from identifying the curve $C_a$ with the $\ell-k$-th support point of $\quot_{1,\dots,\ell-k,d}$. The short exact sequence above induces a short exact sequence 
$$
0 \rightarrow \CE^{(\ell-k)}_a/\CE^{(d)}_a \rightarrow \CE^{(\ell-k-1)}_a/\CE^{(d)}_a \rightarrow \CL_{k+1}' \otimes \CO_{\Delta_{a\sharp}} \rightarrow 0
$$
which can be used to write
$$
(\CE^{(\ell-k)}_1/\CE^{(d)}_1) \otimes \dots \otimes (\CE^{(\ell-k)}_k/\CE^{(d)}_k) \otimes \CL_{k+1}' = 
$$
$$
= \left(\CE^{(\ell-k-1)}_1/\CE^{(d)}_1 - \CL_{k+1}' \otimes \CO_{\Delta_{1\sharp}} \right) \otimes \dots \otimes \left(\CE^{(\ell-k-1)}_k/\CE^{(d)}_k - \CL_{k+1}' \otimes \CO_{\Delta_{k\sharp}} \right)  \otimes \CL_{k+1}'
$$
where $-$ denotes some extension, shifted by 1, in the derived category. Lemmas \ref{lem:push d virtual} and \ref{lem:pi minus general} imply that
$$
Rt_{k+1*} (\CL_{k+1}^{'n}) = S^n(\CE^{(\ell-k-1)}_{k+1}/\CE^{(d)}_{k+1})
$$
for all $n\geq 0$, so we conclude that 
$$
Rt_{k+1*} \left((\CE^{(\ell-k)}_1/\CE^{(d)}_1) \otimes \dots \otimes (\CE^{(\ell-k)}_k/\CE^{(d)}_k) \otimes \CL_{k+1}' \right)
$$
is equal to an extension of
\begin{equation}
\label{eqn:main object bis}
(\CE^{(\ell-k-1)}_1/\CE^{(d)}_1) \otimes \dots \otimes (\CE^{(\ell-k-1)}_k/\CE^{(d)}_k) \otimes (\CE^{(\ell-k-1)}_{k+1}/\CE^{(d)}_{k+1})
\end{equation}
and the following objects as $A = \{a_1<\dots < a_t\}$ runs over non-empty subsets of $\{1,\dots,k\}$
\begin{equation}
\label{eqn:error object bis}
\Delta_{a_1,\dots,a_t,k+1 *} \left(S^{t+1} (\CE^{(\ell-k-1)}/\CE^{(d)}) \right) \bigotimes_{b \in \{1,\dots,k\} \backslash A} (\CE^{(\ell-k-1)}_b/\CE^{(d)}_b).
\end{equation}
This concludes the induction step and thereby the proof of \eqref{eqn:the fact} as an equality of coherent sheaves. The $S_\ell$ action given by swapping factors of the tensor product and multiplying by the alternating character is argued exactly as in Proposition \ref{prop:tautological}. 

\end{proof}

\subsection{The main result}

We will now prove Theorem \ref{thm:tautological intro}. This is the main conjecture of \cite{krug1} which built upon Question 20 in the earlier paper \cite{os}. The statement is the following equality of graded vector spaces 
\begin{equation}
\label{eqn:iso tautological}
\text{Ext}^\bull_{\quot_d}\left(\wedge^{\ell_1} M_1^{[d]} \otimes \dots \otimes \wedge^{\ell_k} M_k^{[d]}, \wedge^{\ell} M^{[d]} \right) \cong 
\end{equation}
$$
 \cong \bigotimes_{i=1}^k S^{\ell_i} H^\bull(C,M \otimes M_i^{\vee})\bigotimes \wedge^{\ell-\ell_1 - \dots - \ell_k} H^\bull(C,V \otimes M) \bigotimes S^{d-\ell}H^\bull(C,\CO_C)
$$
for any line bundles $M_1,\dots,M_k,M$ on $C$ with $k < r$ and any $\ell_1,\dots,\ell_k,\ell \in \{0,\dots,d\}$. By taking suitable linear combinations of products of exterior powers, we could replace the first argument in the Ext group of \eqref{eqn:iso tautological} by any product of Schur functors
$$
S^{\lambda_1} M_1^{[d]} \otimes \dots \otimes S^{\lambda_k} M_k^{[d]}
$$
where the sum of the leading components of the partitions $\lambda_1,\dots,\lambda_k$ is less than $r$ (recall that $\wedge^\ell$ is the Schur functor corresponding to the partition $(1,\dots,1)$).

\begin{proof}\emph{of Theorem \ref{thm:tautological intro}}. Let us abbreviate 
\begin{equation}
\label{eqn:abbreviate}
\wedge^{\ell_1 \dots \ell_k} M_{1\dots k}^{[d]} = \wedge^{\ell_1} M_1^{[d]} \otimes \dots \otimes \wedge^{\ell_k} M_k^{[d]}
\end{equation}
The strategy of proof is to reduce \eqref{eqn:iso tautological} to $\ell = d$, and then to $\ell_1 = \dots  = \ell_k = 0$, in which case the required formula \eqref{eqn:iso tautological} is precisely \eqref{eqn:iso tautological lemma}. Let us first reduce to the case $\ell = d.$ To this end, we consider the nested Quot scheme 
$$
\quot_{\ell, d} = \{ E^{(d)} \subset E^{(\ell)} \subset V \},
$$ 
We have the associated morphism which forgets $E^{(\ell)}$,
$$
p_+: \quot_{\ell, d} \to \quot_d,
$$ 
and also the morphism which forgets $E^{(d)}$ but remembers the support of $E^{(\ell)}/E^{(d)}$
\begin{equation}
\label{eqn:p times s}
p_- \times s_{(d-l)}: \quot_{\ell, d} \to \quot_{\ell} \times C^{(d-\ell)}.
\end{equation}
Thanks to Proposition \ref {prop:tautological bundle}, we may write 
\begin{align*}
&{\text{Ext}}^\bullet_{\quot_d} \left( \wedge^{\ell_1 \dots \ell_k} M_{1\dots k}^{[d]}, \wedge^{\ell} M^{[d]} \right) = \\ 
&= {\text{Ext}}^\bullet_{\quot_{\ell, d}} \left(\wedge^{\ell_1 \dots \ell_k} M_{1\dots k}^{[d]}, \CL_2 \otimes s_{(\ell)}^*(M^{(\ell)}) \right) \\ 
&= H^\bullet \left(\quot_{\ell,d}, (\wedge^{\ell_1 \dots \ell_k} M_{1\dots k}^{[d]})^\vee \otimes \CL_2 \otimes s_{(\ell)}^*(M^{(\ell)}) \right) \\ 
&= H^\bullet\left(\quot_{\ell} \times C^{(d-l)}, R(p_- \times s_{(d-l)})_* \left((\wedge^{\ell_1 \dots \ell_k} M_{1\dots k}^{[d]})^\vee \otimes \CL_2 \otimes s_{(\ell)}^*(M^{(\ell)}) \right) \right) \\
&= H^\bullet\left(\quot_{\ell} \times C^{(d-l)}, R(p_- \times s_{(d-l)})_* \left((\wedge^{\ell_1 \dots \ell_k} M_{1\dots k}^{[d]})^\vee\right) \otimes \wedge^{\ell} M^{[\ell]} \right)
\end{align*}
where $\CL_2$ parameterizes $\det \Gamma(C,V/E^{(\ell)})$ and is thus pulled back from $\quot_{\ell}$. Let us compute the push-forward in the formula above. On $\quot_{\ell, d} \times C $ we have the maps
$$ 
V \to \CF^{(d)} \to \CF^{(\ell)}
$$
and let $\CF_{d-\ell} = \text{Ker }( \CF^{(d)} \to \CF^{(\ell)})$. If we set 
$$
M_{i,d-\ell} = R\pi_* (\mathcal F_{d-\ell} \otimes \rho^* (M_i)), \, \, M_i^{[\ell]} = R\pi_* (\mathcal F^{(\ell)} \otimes \rho^* (M_i)),
$$ 
with $\quot \times C \xrightarrow{\pi,\rho} \quot,C$ the usual projections, then we have a short exact sequence
$$
0 \to M_{i,d-\ell} \to M_i^{[d]} \to M_i^{[\ell]} \to 0
$$
on $\quot_{\ell,d}$. Clearly every exterior power  $\wedge^{\ell_i} M_i^{[d]}$ occurring in the product $\wedge^{\ell_1 \dots \ell_k} M_{1\dots k}^{[d]}$ can be resolved by vector bundles of the form $\wedge^{a_i} M_{i,d-\ell} \otimes \wedge^{b_i} M_i^{[\ell]}$ where $a_i + b_i = \ell_i$. With this in mind, we claim that the push-forward of the map \eqref{eqn:p times s} satisfies
\begin{equation}
\label{eqn: l to d}
R(p_- \times s_{(d-l)})_* \left [ \left(\wedge^{\ell_1 \dots \ell_k} M_{1\dots k}^{[d]}\right)^\vee \right ] = \left(\wedge^{\ell_1 \dots \ell_k} M_{1\dots k}^{[\ell]}\right)^\vee.
\end{equation}
To conclude \eqref{eqn: l to d}, we need to prove that
\begin{equation}
\label{eqn:aaa}
R(p_- \times s_{(d-\ell)})_* \Big(\wedge^{a_1} M_{1,d-\ell} \otimes \dots \otimes \wedge^{a_k} M_{k,d-\ell} \Big)^\vee = 0
\end{equation}
if $a_1+\dots+a_k > 0$, which we will do as follows. The diagram
$$
\xymatrixcolsep{4pc}\xymatrix{\quot_{\ell,\dots,d} \ar[d] \ar[r]^-{\widetilde{p}_- \times s_{d-l}} & \quot_{\ell} \times C^{d-\ell} \ar[d]^{\rho_{(d-\ell)}} \\
\quot_{\ell,d} \ar[r]^-{p_- \times s_{(d-l)}} & \quot_{\ell} \times C^{(d-\ell)}}
$$
satisfies the natural base change isomorphisms analogous to \eqref{eqn:equiv cartesian} and \eqref{eqn:equiv cartesian other way}. Therefore, equality \eqref{eqn:aaa} holds if and only if it holds after being pulled back along the flat morphism $\rho_{(d-\ell)}$, which is equivalent to the relation
\begin{equation}
\label{eqn:bbb}
R(\widetilde{p}_- \times s_{d-\ell})_* \Big(\wedge^{a_1} M_{1,d-\ell} \otimes \dots \otimes \wedge^{a_k} M_{k,d-\ell} \Big)^\vee = 0
\end{equation}
if $a_1+\dots+a_k > 0$. On $\quot_{\ell,\dots,d}$, the rank $d-\ell$ vector bundle $M_{i,d-\ell}$ can be expressed as an extension of the line bundles $\CL_1,\dots,\CL_{d-\ell}$ tensored with $s_{d-\ell}^*(\text{various line bundles})$. Therefore, the left-hand side of \eqref{eqn:bbb} is an extension of objects of the form
\begin{equation}
\label{eqn:ccc}
R(\widetilde{p}_- \times s_{d-\ell})_* \Big(\CL_1^{-n_1} \otimes \dots \otimes \CL_{d-\ell}^{-n_{d-\ell}} \otimes s_{d-\ell}^*(\text{line bundle}) \Big)
\end{equation}
for various $n_1,\dots,n_{d-\ell} \in \{0,\dots,k\}$ satisfying $n_1+\dots+n_{d-\ell} = a_1+\dots+a_k > 0$. By Lemma \ref{lem:pi minus}, the map $\widetilde{p}_- \times s_{d-\ell}$ is a tower of $d-\ell$ projectivizations of rank $r$ vector bundles. Since $k < r$, formula \eqref{eqn:push d} guarantees that all of the push-forwards \eqref{eqn:ccc} are zero, thus proving \eqref{eqn:bbb} $\Rightarrow$ \eqref{eqn:aaa} $\Rightarrow$ \eqref{eqn: l to d}. The latter formula implies that
\begin{align*}
\text{LHS of \eqref{eqn:iso tautological}} &= 
H^\bullet \left(\quot_{\ell} \times C^{(d-l)}, \left(\wedge^{\ell_1 \dots \ell_k} M_{1\dots k}^{[\ell]}\right)^\vee \otimes \wedge^{\ell} M^{[\ell]} \right) \\
&= H^\bullet\left(\quot_{\ell}, \left(\wedge^{\ell_1 \dots \ell_k} M_{1\dots k}^{[\ell]}\right)^\vee \otimes \wedge^{\ell} M^{[\ell]} \right) \otimes H^\bullet (C^{(d-\ell)}, \mathcal O_{C^{(d-\ell)}})  \\
&= \text{Ext}^\bullet_{\quot_{\ell}}\left(\wedge^{\ell_1 \dots \ell_k} M_{1\dots k}^{[\ell]}, \wedge^{\ell} M^{[\ell]} \right)  \otimes S^{d-\ell} H^\bullet (C, \mathcal O_C).
\end{align*}
The formulas above reduce \eqref{eqn:iso tautological} to the case $\ell = d$, which we henceforth assume. Let us now reduce it further to the case $\ell_1 = \dots = \ell_k = 0$. It suffices to prove that
\begin{equation}
\label{eqn: wedgereduction}    
{\text{Ext}}^\bullet_{\quot_d} \left( \wedge^{\ell_1 \dots \ell_k} M_{1\dots k}^{[d]},  \wedge^{d} M^{[d]} \right) 
\end{equation}
$$
={\text{Ext}}^\bullet_{\quot_{d -\ell_1}} \left( \wedge^{\ell_2 \dots \ell_k} M_{2\dots k}^{[d-\ell_1]}, \wedge^{d -\ell_1} M^{[d -\ell_1]} \right) \otimes H^\bullet (C^{\,(\ell_1)}, (M \otimes M_1^{\vee})^{(\, \ell_1) }).
$$ 
Indeed, iterating formula \eqref{eqn: wedgereduction} $k$ times results in 
$$
{\text{Ext}}^\bullet_{\quot_d} \left( \wedge^{\ell_1 \dots \ell_k} M_{1\dots k}^{[d]}, \wedge^{d} M^{[d]} \right) 
$$
$$
=H^\bullet \left(\quot_{d -\ell_1 - \cdots - \ell_k}, \wedge^{d -\ell_1 -\cdots - \ell_k} M^{[d -\ell_1 - \cdots - \ell_k]} \right) \bigotimes_{i=1}^{k}\, S^{\ell_i}\,  H^\bullet (C, M \otimes M_i^{\vee}).
$$
which reduces \eqref{eqn:iso tautological} to the case $\ell = d$ and $\ell_1 = \cdots = \ell_k = 0$. This case is the already proven formula \eqref{eqn:iso tautological lemma} of Proposition \ref{prop:tautological}. The proof of Theorem \ref{thm:tautological intro} is therefore complete once we establish equation \eqref{eqn: wedgereduction}. To do so, let us consider the Quot scheme
$$
\quot_{d-\ell_1,\,  d} = \{ E^{(d)} \subset E^{(d- \ell_1)} \subset V \}
$$ 
and the natural morphisms
\begin{equation}
\label{eqn:twomaps}
\quot_{d-\ell_1} \times C^{(\ell_1)} \xleftarrow{p_-' \times s_{(\ell_1)}}  \quot_{d-\ell_1,\,  d}  \stackrel{p_+'}{\longrightarrow}  \quot_d.
\end{equation}
We may use formula \eqref{eqn:wedge to e dual} in order to write on $\quot_d$
\begin{equation}
\label{eqn:pause}
\text{LHS of \eqref{eqn: wedgereduction}} = H^\bullet \left(\quot_d, \left(\wedge^{\ell_1 \dots \ell_k} M_{1\dots k}^{[d]}\right)^\vee \otimes \wedge^d M^{[d]} \right) 
\end{equation}
\begin{align*}
&= H^\bullet \left(\quot_{d},  \left(\wedge^{\ell_2 \dots \ell_k} M_{2\dots k}^{[d]}\right)^\vee \otimes Rp_{+*}' \left (\CL_1^{-1} \otimes s_{(\ell_1)}^* (M_1^\vee)^{(\ell_1)} \right)  \otimes \wedge^d M^{[d]}\right) \\
&= H^\bullet \left(\quot_{d-\ell_1,d},   \left(\wedge^{\ell_2 \dots \ell_k} M_{2\dots k}^{[d]}\right)^\vee \otimes \CL_1^{-1} \otimes s_{(\ell_1)}^* (M_1^\vee)^{(\ell_1)}  \otimes \wedge^d M^{[d]}  \right)
\end{align*}
where $\CL_1$ parameterizes $\det \Gamma(C,E^{(d-\ell_1)}/E^{(d)})$ on $\quot_{d-\ell_1,d}$. However, since we have the following short exact sequence
$$
0 \to M_{\ell_1} \to M^{[d]} \to M^{[d-\ell_1]} \to 0
$$
of rank $\ell_1,d,d-\ell_1$ vector bundles on $\quot_{d-\ell_1,d},$ with $$M_{\ell_1} = R\pi_*(\text{Ker }(\CF^{(d)} \twoheadrightarrow \CF^{(d-\ell_1)}) \otimes \rho^*(M)),$$ we obtain the corresponding equality of determinants
$$
\wedge^d M^{[d]} = \wedge^{d-\ell_1} M^{[d-\ell_1]} \otimes \wedge^{\ell_1} M_{\ell_1} = \wedge^{d-\ell_1} M^{[d-\ell_1]} \otimes \CL_1 \otimes s_{(\ell_1)}^*(M^{(\ell_1)}).
$$
Plugging this into \eqref{eqn:pause} gives us
\begin{align*}
&\text{LHS of \eqref{eqn: wedgereduction}} = H^\bullet \left(\quot_{d-\ell_1,d} , \left(\wedge^{\ell_2 \dots \ell_k} M_{2\dots k}^{[d]}\right)^\vee \otimes \wedge^{d-\ell_1} M^{[d-\ell_1]} \otimes s_{(\ell_1)}^*((M \otimes M_1^\vee)^{(\ell_1)})  \right) \\
&= H^\bullet \left(\quot_{d-\ell_1} \times C^{(\ell_1)}, R(p'_- \times s_{(\ell_1)})_*  \left( \left(\wedge^{\ell_2 \dots \ell_k} M_{2\dots k}^{[d]}\right)^\vee \right) \otimes \wedge^{d-\ell_1} M^{[d-\ell_1]} \otimes \rho^*((M \otimes M_1^\vee)^{(\ell_1)}) \right)
\end{align*}
where $\rho : \quot_{d-\ell_1} \times C^{(\ell_1)} \to C^{(\ell_1)}$ denotes the projection onto the second factor. For the same reason that formula \eqref{eqn: l to d} held, the only tensor product of wedges which survives the push-forward under $R(p'_- \times s_{(\ell_1)})_*$ is
$$
\left(\wedge^{\ell_2 \dots \ell_k} M_{2\dots k}^{[d-\ell_1]}\right)^\vee
$$
and we thus conclude that the left-hand side of \eqref{eqn: wedgereduction} is
\begin{align*}
&H^\bullet \left(\quot_{d-\ell_1} \times C^{(\ell_1)}, \left(\wedge^{\ell_2 \dots \ell_k} M_{2\dots k}^{[d-\ell_1]}\right)^\vee \otimes \wedge^{d-\ell_1} M^{[d-\ell_1]} \otimes \rho^*( (M \otimes M_1^\vee)^{(\ell_1)}) \right) \\ 
&= H^\bullet \left(\quot_{d-\ell_1}, \left(\wedge^{\ell_2 \dots \ell_k} M_{2\dots k}^{[d-\ell_1]}\right)^\vee \otimes \wedge^{d-\ell_1} M^{[d-\ell_1]} \right) \otimes H^\bullet(C^{(\ell_1)}, (M \otimes M_1^\vee)^{(\ell_1)}) \\
&= H^\bullet \left(\quot_{d-\ell_1}, \left(\wedge^{\ell_2 \dots \ell_k} M_{2\dots k}^{[d-\ell_1]}\right)^\vee \otimes \wedge^{d-\ell_1} M^{[d-\ell_1]} \right) \otimes S^{\ell_1}H^\bullet(C, M \otimes M_1^\vee) 
\end{align*}
which is precisely the right-hand side of \eqref{eqn: wedgereduction}. \end{proof}

\section{Appendix}

\vskip.2in

\subsection{The local picture} In the present Section, we will prove all ``local" statements that were used in the present paper, i.e. the ones for which one could assume that $C = \BA^1$. To this end, let us remark that while the scheme $\quot_d$ is not defined for the curve $C = \BA^1$, an \'etale local model for $\quot_d$ is given by
\begin{equation}
\label{eqn:local quot}
\quot_d^{\BA^1} = \Big\{ A : \BC^d \rightarrow \BC^d, v_1,\dots,v_r \in \BC^d \text{ which are cyclic for }A \Big\} \Big / GL_d(\BC)
\end{equation}
where the action of $GL_d$ conjugates $A$ and multiplies $v_1,\dots,v_r$. Above, the $r$-tuple of vectors $v_1,\dots,v_r$ is called ``cyclic for $A$" if the entire $\BC^d$ is linearly spanned by $\{A^k v_i\}_{k \geq 0, i \in \{1,\dots,r\}}$. The meaning of the local picture above is that the data \eqref{eqn:local quot} precisely determines a map of $\BC[x] = \CO_{\BA^1}$-modules
$$
\BC[x]^{\oplus r} \twoheadrightarrow \BC^d, \qquad (\underbrace{\dots,0,x^k,0,\dots}_{x^k \text{ on }i\text{-th position}}) \mapsto A^kv_i
$$
thus making $F = \BC^d$ into a coherent sheaf on $\BA^1$. The nested Quot schemes $\quot_{d_1,\dots,d_n}^{\BA^1}$ are also defined as above, but requiring $A$ to be appropriately block triangular.

\begin{remark}

We expect that Theorem \ref{thm:intro main} also holds for $\quot_d^{\BA^1}$, and that our proof will run through with the obvious modifications.
    
\end{remark}

\subsection{The Propositions} We will now use the local picture above to provide proofs of three key technical results. The first of these is Proposition \ref{prop:z plus}, which states that the variety $\fZ_+'$ of Subsection \ref{sub:two components} has rational singularities, i.e.
$$
R\pi'_{+*}(\CO_{\fY}) = \CO_{\fZ_+'} 
$$
Because the study of the singularities of $\fZ_+'$ is local, we may assume $C = \BA^1$.

\begin{proof} \emph{of Proposition \ref{prop:z plus}}: We recall that $\quot_{d+1}^{\BA^1}$ parameterizes $A : \BC^{d+1} \rightarrow \BC^{d+1}$ together with cyclic vectors $v_1,\dots,v_r \in \BC^{d+1}$, modulo $G = GL_{d+1}(\BC)$. Then the $C = \BA^1$ versions of the schemes $\fZ_+, \fZ_+', \fY$ admit the following presentations
\begin{equation}
\label{eqn:z plus affine}
\fZ_+^{\BA^1} = \Big\{A,v_1,\dots,v_r \text{ as above}, \lambda, \mu \in \BC, \text{lines } \ell, m \subset \BC^{d+1} \Big|A\ell = \lambda \ell, A m = \mu m\Big\} \Big/ G
\end{equation}
\begin{multline}
\label{eqn:y affine}
\fY^{\BA^1} = \Big\{A,v_1,\dots,v_r,\lambda, \mu, \ell, m \text{ as in \eqref{eqn:z plus affine}, together with a plane } S \subseteq \BC^{d+1} \Big| \\ S \supset \ell, m \text{ and } A S \subseteq S, A(S/\ell) = \mu (S/\ell), A(S/m) = \lambda(S/m) \Big\} \Big/G
\end{multline}
In the formulas above, expressions such as $A\ell = \lambda \ell$ and $A(S/m) = \lambda(S/m)$ mean that $A$ acts as multiplication by $\lambda$ on the one-dimensional vector spaces $\ell$ and $S/m$. The map $\pi_+ : \fY \rightarrow \fZ_+$ forgets $S$. We claim that the image $\fZ_+'$ of $\pi_+$ can described as
\begin{equation}
\label{eqn:z plus prime affine}
{\fZ_+^{'\BA^1}} = \Big\{A,v_1,\dots,v_r,\lambda, \mu, \ell, m \text{ as in \eqref{eqn:z plus affine}} \Big| (z-\lambda)(z-\mu) \text{ divides } \det (z I_{d+1}-A)\Big\} \Big/G
\end{equation}
Indeed, for any $A,v_1,\dots,v_r,\lambda,\mu,\ell,m$ as in \eqref{eqn:z plus affine}, the existence of a 2-dimensional subspace $S$ as in \eqref{eqn:y affine} is equivalent to either $\lambda \neq \mu$ (in which case $S$ is forced to be $\ell \oplus m$) or $\lambda = \mu$ but with algebraic multiplicity at least 2. We will perform the following simplifications, which do not affect the singularities of the morphism
$$
\pi'_+ : \fY^{\BA^1} \rightarrow \fZ_+^{'\BA^1}
$$

\begin{itemize}[leftmargin=*]

\item ignore the free $G$ action

\item ignore the vectors $v_1,\dots,v_r$, which constitute an open subset in an affine bundle

\item perform a translation and change of basis so that
\begin{equation}
\label{eqn:assumptions}
\ell = \text{span} \begin{pmatrix} 1 \\ 0 \\ \vdots \\ 0 \end{pmatrix}, \quad \lambda = 0
\end{equation}

\end{itemize}

In the aforementioned basis, we must have
$$
A = \begin{pmatrix} 0 & f_1 & \dots & f_d \\ 0 & b_{11} & \dots & b_{1d} \\ \vdots & \vdots & \ddots & \vdots \\ 0 & b_{d1} & \dots & b_{dd} \end{pmatrix}
$$
We may assume that
$$
m = \text{span} \begin{pmatrix} 1 \\ w_1 \\ \vdots \\ w_d \end{pmatrix}
$$
because if the first entry of $m$ were 0, then the fact that $(w_1,\dots,w_d) \neq (0,\dots,0)$ would imply that we would actually be on the smooth locus $\{\ell \neq m\}$ of the variety \eqref{eqn:z plus prime affine}. Therefore, the three bullets above allow us to replace ${\fZ_+^{'\BA^1}}$ by the variety 
\begin{equation}
\label{eqn:the coordinates}
Z'_+ = \Big\{ (B,w,\mu,f) \Big| B w = \mu w, \mu = f \cdot w, \det (\mu  I_d - B) = 0 \Big\}
\end{equation}
where $B$ is the matrix with coefficients $\{b_{ij}\}_{1\leq i,j \leq d}$, $f$ is the row vector with entries $f_i$ and $w$ is the vector with entries $w_i$. Similarly, we may replace $\fY^{\BA^1}$ by the variety
\begin{equation}
\label{eqn:the coordinates resolution}
Y = \Big\{ (B,w,\mu,f,s) \text{ as above} \Big| B s = \mu s, w \in s  \Big\}
\end{equation}
where $s$ is a line in $\BC^d$. Our problem has been reduced to showing that 
\begin{equation}
\label{eqn:p rational}
Rp_*(\CO_Y) = \CO_{Z_+'}
\end{equation}
where $p : Y \rightarrow Z_+'$ is the map that forgets $s$.

\begin{lemma}
\label{lem:cohen-macaulay}

For any $d \geq 0$, consider the varieties
\begin{equation}
\label{eqn:the scheme}
\bar{Z}'_+ = \Big\{B:\BC^d \rightarrow \BC^d, \mu \in \BC, w \in \BC^d \Big| B w = \mu w,  \det (\mu  I_d - B) = 0 \Big\}
\end{equation}
\begin{equation}
\label{eqn:the scheme resolution}
\bar{Y} = \Big\{B,\mu,w \text{ as above and a line }s \subset \BC^d \Big| (B- \mu I_d) s = 0, w \in s \Big\}
\end{equation}
The map $\bar{p} : \bar{Y} \rightarrow \bar{Z}_+'$ that forgets $s$ has the property that $R\bar{p}_*(\CO_{\bar{Y}}) = \CO_{\bar{Z}_+'}$.

\end{lemma}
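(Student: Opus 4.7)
The plan is to reduce to a standard Kempf-type collapsing on $\BP^{d-1}$ and then verify its rational-singularities property via Bott vanishing. First I will perform the change of variables $A := B - \mu I$, which takes the condition $Bw = \mu w$ to $Aw = 0$, the condition $\det(\mu I - B) = 0$ to $\det A = 0$, and $(B - \mu I) s = 0$ to $A s = 0$. Since the parameter $\mu$ becomes free, this yields isomorphisms
\[
\bar{Z}'_+ \cong \BA^1 \times \bar{N}, \qquad \bar{Y} \cong \BA^1 \times \tilde{N},
\]
where $\bar{N} = \{(A,w) \in \mathrm{Mat}_d \times \BA^d : Aw = 0,\ \det A = 0\}$ and $\tilde{N} = \{(A,s,w) \in \mathrm{Mat}_d \times \BP^{d-1} \times \BA^d : As = 0,\ w \in s\}$, and $\bar{p}$ becomes $\mathrm{Id}_{\BA^1} \times \tilde{p}$ with $\tilde{p}$ forgetting $s$ (note that $As=0,\ w \in s$ force $Aw=0$ and $\det A = 0$ automatically). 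By the K\"unneth formula it suffices to show $R\tilde{p}_* \CO_{\tilde{N}} = \CO_{\bar{N}}$.

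Next I would identify $\tilde{N}$ geometrically: projecting to the $\BP^{d-1}$ factor that records $s$ exhibits $\tilde{N}$ as the total space of the rank $(d^2 - d + 1)$ vector bundle $\xi = (\CQ^\vee \otimes \BA^d) \oplus \CO(-1)$ on $\BP^{d-1}$, where $\CQ$ is the tautological quotient in the Euler sequence $0 \to \CO(-1) \to \CO^d \to \CQ \to 0$. In particular $\tilde{N}$ is smooth of dimension $d^2$. The inclusion of $\xi$ as a subbundle of the trivial $(\mathrm{Mat}_d \oplus \BA^d) \otimes \CO_{\BP^{d-1}}$ realizes $\tilde{p}$ as the associated Kempf-type collapsing onto $\bar{N}$, and since $s = \langle w \rangle$ is determined over the dense open $\{w \neq 0\}$, the map $\tilde{p}$ is birational.

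The remaining content is the cohomology vanishing. By the Kempf--Lascoux--Weyman framework for collapsings of homogeneous bundles, $R\tilde{p}_*\CO_{\tilde{N}} = \CO_{\bar{N}}$ will follow once
\[
H^i(\BP^{d-1}, \mathrm{Sym}^j \xi^\vee) = 0 \quad \text{for all } i > 0,\ j \geq 0.
\]
I would use $\xi^\vee = (\CQ \otimes (\BA^d)^\vee) \oplus \CO(1)$ together with the Cauchy decomposition to rewrite each summand of $\mathrm{Sym}^j \xi^\vee$ as $S_\lambda \CQ \otimes \CO(b)$ for partitions $\lambda$ with $\ell(\lambda) \leq d-1$ and integers $b \geq 0$. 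Each such bundle is the homogeneous bundle associated to a dominant weight of the Levi of the parabolic stabilizing a line in $\BC^d$, so Bott's theorem supplies $H^{>0}=0$, while the $H^0$-pieces assemble into the coordinate ring of $\bar{N}$.

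The main obstacle is the systematic execution of this cohomology step: while each individual Bott vanishing is routine, matching weights across the full Cauchy decomposition and extracting the identification $R^0\tilde{p}_* \CO_{\tilde{N}} \cong \CO_{\bar{N}}$ from the resulting spectral sequence requires care. A fallback strategy, if the direct approach becomes unwieldy, is to factor $\tilde{p}$ through the classical Springer--Kempf resolution $\{(A,s) : As = 0\} \to \{\det A = 0\}$ (whose rational singularities are classical) and to handle the extra $\CO(-1)$-summand as an auxiliary line bundle whose fiberwise cohomology is trivial.
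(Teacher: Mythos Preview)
Your proposal is correct and follows essentially the same route as the paper. Both arguments translate away $\mu$, identify $\bar{Y}$ (equivalently your $\tilde{N}$) with the total space of $(\CQ^\vee\otimes\CO^{\oplus d})\oplus\CO(-1)$ over $\BP^{d-1}$, and deduce $R\bar{p}_*\CO_{\bar{Y}}=\CO_{\bar{Z}'_+}$ from the vanishing of higher cohomology of the symmetric powers of the dual bundle; the paper simply cites \cite[Theorem 5.1.3]{weyman}, which is exactly the Kempf--Lascoux--Weyman collapsing result you invoke, rather than spelling out the Cauchy/Bott step.
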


Because $p : Y \rightarrow Z_+'$ is obtained from $\bar{p} : \bar{Y} \rightarrow \bar{Z}_+'$ via base change along the local complete intersection morphism which adds the coordinates $f_1,\dots,f_d$ and imposes the equation $\mu = f \cdot w$ (this uses the straightforward fact that $\dim Y - \dim \bar{Y} = \dim Z'_+ - \dim \bar{Z}'_+ = d-1$), then the fact that $\bar{p}$ is a resolution of rational singularities implies the sought-for fact that $p$ is a resolution of rational singularities, i.e. \eqref{eqn:p rational}.

\begin{proof} We thank Claudiu Raicu for showing us the following argument, following \cite[Theorem 5.1.3]{weyman}. By a translation, we may assume that $\mu = 0$. Let $\BP^{d-1}$ be the projective space of lines $s \subset \BC^d$, on which we have the usual short exact sequence
$$
0 \rightarrow \CO_{\BP^{d-1}}(-1) \rightarrow \CO_{\BP^{d-1}}^{\oplus d} \rightarrow \CQ \rightarrow 0
$$
By definition, we have
$$
\bar{Y} = \text{Spec}_{\BP^{d-1}} \Big( S^\bullet \left( \text{Hom}(\CO_{\BP^{d-1}}^{\oplus d},\CQ) \oplus \CO_{\BP^{d-1}}(1) \right) \Big)
$$
(the dual of the Hom space above parameterizes the matrix $B$ and the dual of $\CO_{\BP^{d-1}}(1)$ parameterizes the vector $w \in s$). Since the symmetric algebra above can be resolved by the symmetric powers of the vector bundles $\CQ$ and $\CO_{\BP^{d-1}}(1)$, it has no higher cohomology. Similarly, any global function on $\bar{Y}$ can be expressed in terms of the symmetric powers of $\CO_{\BP^{d-1}}(1)$ (which gives a polynomial function in the entries of the vector $w$) and in terms of the symmetric powers of $\text{Hom}(\CO_{\BP^{d-1}}^{\oplus d},\CQ)$ (which gives a polynomial function in the entries of the matrix $B$ satisfying $Bw = 0$), which is precisely the coordinate ring of the affine variety $\bar{Z}_+'$. 

\end{proof} \end{proof}

The next result we will prove is Proposition \ref{prop:cartesian}, which states that the map $\zeta$ of \eqref{eqn:zeta def} is a resolution of rational singularities, i.e.
$$
R\zeta_*(\CO_{\quot_{1,\dots,d}}) = \CO_{C^d \times_{C^{(\bd)}} \quot_{\bd}} 
$$

\begin{proof} \emph{of Proposition \ref{prop:cartesian}:} The statement \eqref{eqn:zeta push} is \'etale local in the base $C^{(\bd)}$, so we may assume that $C = \BA^1$ and $(\bd) = (d,0,\dots,0)$. In this setting, we recall that
$$
\quot_d^{\BA^1} \xrightarrow{j} (\fg \times \BC^{rd}) \Big/ G
$$
where $G = GL_d(\BC)$, $\fg = \text{End}(\BC^d)$ and $\BC^{rd}$ parameterizes the vectors $v_1,\dots,v_r$ of \eqref{eqn:local quot}. The map $j$ above is the open embedding encoding the cyclicity condition. Similarly, we have 
$$
\quot_{1,\dots,d}^{\BA^1} \xrightarrow{j'} (\tfg \times \BC^{rd}) \Big/ G
$$
where $\tfg$ is the Grothendieck-Springer resolution which parameterizes pairs of $A \in \fg$ together with a full flag $\Phi$ of $\BC^d$ reserved by $A$. With this in mind, the commutative diagram \eqref{eqn:cartesian} can be refined to a commutative diagram
$$
\xymatrix{ \quot_{1,\dots,d}^{\BA^1} \ar[d]_{p_{(d)}} \ar[r]^-{j'} & (\tfg \times \BC^{rd}) /G\ar[d]^{\mu} \ar[r]^-{\lambda'} & (\BA^1)^{d} \ar[d]^{\rho_{(d)}} \\ \quot_d^{\BA^1} \ar[r]^-j & (\fg \times \BC^{rd}) /G \ar[r]^-{\lambda} & (\BA^1)^{(d)}}
$$
where the maps $\lambda$ and $\lambda'$ keep track of the eigenvalues of the matrix $A$, and the map $\mu$ forgets the full flag $\Phi$. However, we have the following isomorphism of $G$-equivariant coherent sheaves (see \cite[Theorem 7.13]{gaitsgory})
\begin{equation}
\label{eqn:eta}
R\eta_*(\CO_{\tfg}) =  \CO_{\CY}
\end{equation}
where $\eta : \tfg \rightarrow \CY =  \fg \times_{(\BA^1)^{(d)}} (\BA^1)^d$ is the natural map induced by the commutative diagram
$$
\xymatrix{ \tfg \ar[d]_-\mu \ar[r]^-{\lambda'} & (\BA^1)^{d} \ar[d]^{\rho_{(d)}} \\ \fg \ar[r]^-{\lambda} & (\BA^1)^{(d)}}
$$
Therefore, the statement analogous to \eqref{eqn:eta} holds when

\begin{itemize}

\item replacing $\fg$ and $\tfg$ by $\fg \times \BC^{rd}$ and $\tfg \times \BC^{rd}$, respectively

\item taking the quotient stacks of $\fg \times \BC^{rd}$ and $\tfg \times \BC^{rd}$ by $G$

\item considering the open subsets determined by the cyclicity condition

\end{itemize}

\noindent After making the three modifications above, \eqref{eqn:eta} yields precisely \eqref{eqn:zeta push}. \end{proof}

Finally, we must show that the moduli space $\fY_i$ of sheaves \eqref{eqn:yi} satisfies
\begin{equation}
\label{eqn:blow up i app}
R\pi^\uparrow_{i*}(\CO_{\fY_i}) = \CO_{\quot_{1,\dots,d}} 
\end{equation}
where $\pi^\uparrow_i : \fY_i \rightarrow \quot_{1,\dots,d}$ forgets the sheaf denoted by $\tE^{(d-i)}$. As a consequence of \eqref{eqn:blow up i app}, we will deduce that the two $S_2$ equivariant structures on the object
$$
\Gamma = Rs_{d*} \circ Lp_{(d)}^*(\gamma) \in \text{Ob} \left(\DD_{C^d}\right)
$$
(for any $\gamma \in \text{Ob}(\DD_{\quot_{d}})$) described in the statement of Lemma \ref{lem:big} coincide.

\begin{proof} \emph{of Lemma \ref{lem:big}:} It is easy to see that $\fY_i$ can be defined as a relative projectivization over the moduli space that parameterizes the central square of \eqref{eqn:yi}, just like $\quot_{1,\dots,d}$ can be defined as a relative projectivization over the moduli space $\quot_{i-1,i,i+1}$. If one can show that $\fY_i$ has dimension equal to that of $\quot_{1,\dots,d}$, then the base change isomorphism would allow us to deduce \eqref{eqn:blow up i app} from \eqref{eqn:push o up and down}. To perform this dimension count, let us work \'etale locally (i.e. assume $C = \BA^1$), in which case
$$
\quot_{1,\dots,d}^{\BA^1} = \Big\{A \text{ upper triangular } d \times d \text{ matrix and cyclic vectors }v_1,\dots,v_r \Big\} \Big/B
$$
where $B$ denotes the Borel subgroup of upper triangular $d \times d$ matrices. Similarly
$$
\fY_i^{\BA^1} = \left\{A,v_1,\dots,v_r \text{ as above, } [x : y] \in \BP^1 \text{ s.t. } \begin{pmatrix} a_{i,i} & a_{i,i+1} \\ 0 & a_{i+1,i+1} 
\end{pmatrix} \begin{pmatrix} x \\ y \end{pmatrix} = a_{i+1,i+1} \begin{pmatrix} x \\y \end{pmatrix}  \right\} \Big/B
$$
The map $\pi_i^\uparrow : \fY_i^{\BA^1} \rightarrow \quot_{1,\dots,d}^{\BA^1}$ forgets $[x:y]$ and has fibers equal to $\BP^1$ if $a_{i,i} = a_{i+1,i+1}$ and $a_{i,i+1} = 0$, and equal to a point otherwise. This readily implies the fact that $\fY_i^{\BA^1}$ has expected dimension $rd$, since both $\quot_{1,\dots,d}^{\BA^1}$ and its subvariety $\{a_{i,i} = a_{i+1,i+1}\}$ are smooth and connected of dimensions $rd$ and $rd-1$, respectively (on account of them both being towers of projective bundles, see \eqref{eqn:punctual} and the subsequent paragraph) and so imposing the additional equation $a_{i,i} = a_{i+1,i+1}$ will cut out a subvariety of dimension $rd-2$. 

\medskip

To calculate the $S_2$ equivariant structure of $\Gamma$, let us rewrite
\begin{multline*}
\fY_i^{\BA^1} = \Big\{A \in \fp_i,v_1,\dots,v_r \text{ cyclic, lines } \ell, m \subset S \Big| \\  A\ell = \lambda \ell, Am = \mu m, A(S/\ell) = \mu(S/\ell), A(S/m) = \lambda(S/m) \Big\} \Big/ P_i
\end{multline*}
where $P_i$ is the parabolic subgroup that preserves the flag $\Phi_i = (\BC^1 \hookrightarrow \dots \hookrightarrow \BC^{i-1} \hookrightarrow \BC^{i+1} \hookrightarrow \dots \hookrightarrow \BC^d)$, $\fp_i$ is its Lie algebra, $S$ denotes $\BC^{i+1}/\BC^{i-1}$ and $\lambda,\mu$ denote the eigenvalues of $A$ on the 2-dimensional subspace $S$. The action of $S_2$ on $\fY_i^{\BA^1}$ permutes $(\ell,\lambda)$ and $(m,\mu)$ and it is clear that the map $r_i : \fY_i^{\BA^1} \rightarrow (\BA^1)^d$ which remembers the eigenvalues $(\dots,\lambda,\mu,\dots)$ is $S_2$ equivariant. This map fits in the following diagram 

$$
\xymatrix{\fY_i^{\BA^1} \ar@/_2pc/[rrd]_{p_{(d)} \circ \pi_i^\uparrow} \ar@/^3pc/[rrr]^{r_i} \ar@/^1pc/[rr]^{\nu_i} \ar[r]_-{\pi^\uparrow_i} & \quot_{1,\dots,d}^{\BA^1} \ar[rd]_{p_{(d)}} \ar[r]_-{\zeta} & \quot_{d}^{\BA^1} \times_{(\BA^1)^{(d)}} (\BA^1)^d \ar[r]^-{\text{proj}_2} \ar[d]^{\text{proj}_1}& (\BA^1)^d \\ & & \quot_{d}^{\BA^1}}
$$
where $\pi_i^\uparrow$ forgets the line $m$ and $\zeta$ forgets that $A$ is upper triangular, but remembers its sequence of diagonal entries. The first two horizontal maps in the sequence above have the property that derived direct image takes the structure sheaf to the structure sheaf (see \eqref{eqn:zeta push} and \eqref{eqn:blow up i app}), so we conclude that the same is true for the map $\nu_i$. Since
$$
\Gamma =  R\text{proj}_{2*} \circ R\zeta_* \circ L \zeta^* \circ L \text{proj}_1^*(\gamma)
$$
then the discussion above implies both the formulas
$$
\Gamma =  R\text{proj}_{2*} \circ L \text{proj}_1^*(\gamma) 
$$
and
$$
\Gamma = Rr_{i*} \circ L(p_{(d)} \circ \pi_i^\uparrow)^*(\gamma) = R\text{proj}_{2*} \circ R\nu_{i*} \circ L \nu_i^* \circ L \text{proj}_1^*(\gamma)
$$
By definition, the former of these formulas defines the $S_2$ equivariance in the first bullet (in the statement) of Lemma \ref{lem:big}, which the latter formula defines the $S_2$ equivariance in the second bullet therein. The fact that they coincide is a consequence of the fact that $\nu_i$ is $S_2$-equivariant and has the property that $R\nu_{i*} \circ L\nu_i^* = \text{Id}$. \end{proof}

\end{document}